\documentclass{article}
\usepackage{graphicx}

\usepackage[fleqn]{amsmath}
\usepackage{amsthm}
\usepackage{amsmath}
\usepackage{amssymb}
\usepackage{amsbsy}
\usepackage{amsfonts}
\usepackage{mathrsfs}
\usepackage[all]{xy}
\usepackage{amstext}
\usepackage{amscd}
\usepackage[dvips]{epsfig}
\usepackage{psfrag}
\usepackage{enumerate}
\usepackage{flafter}
\allowdisplaybreaks

\textwidth168mm
\textheight235mm
\topmargin-1.033cm
\setlength{\oddsidemargin}{-4mm}
\setlength{\evensidemargin}{-4mm}
\setlength{\unitlength}{1pt}

\theoremstyle{plain}
\newtheorem{thm}{Theorem}[section]
\newtheorem{prop}[thm]{Proposition}
\newtheorem{cor}[thm]{Corollary}
\newtheorem{lem}[thm]{Lemma}
\theoremstyle{definition}
\newtheorem{exa}[thm]{Example}

\newtheorem{rem}[thm]{Remark}
\newtheorem{defn}[thm]{Definition}

\def\Ker{\mathop{\mathrm{Ker}}\nolimits}
\def\Coker{\mathop{\mathrm{Coker}}\nolimits}

\def\F{\mathop{\mathbb{F}}\nolimits}

\newcommand{\tri}{{ \lhd}}

\newcommand{\lra}{\longrightarrow}
\newcommand{\ra}{\rightarrow}

\newcommand{\R}{{\Bbb R}}
\newcommand{\Z}{{\Bbb Z}}
\newcommand{\N}{{\Bbb N}}
\newcommand{\As}{{\rm As }}

\newcommand{\K}{{\mathcal{K}}}

\newcommand{\sh}{\mathcal{S}}
\newcommand{\C}{\mathbb{C}}

\newcommand{\pc}[2]{\mbox{$\begin{array}{c}
\includegraphics[scale=#2]{#1.eps}
\end{array}$}}
\begin{document}
\large
\begin{center}
{\bf\Large On the fundamental 3-classes of knot group representations}
\end{center}

\vskip 2.5pc
\begin{center}
{\Large Takefumi Nosaka}
\end{center}\vskip 1pc
\begin{abstract}\baselineskip=13pt
\noindent
We discuss the fundamental (relative) 3-classes of knots (or hyperbolic links),
and provide diagrammatic descriptions of the push-forwards with respect to every link-group representation.
The point is an observation of a bridge between the relative group homology and quandle homology from the viewpoints of
Inoue--Kabaya map \cite{IK}.
Furthermore, we give an algorithm to algebraically describe the fundamental 3-class of any hyperbolic knot.
\end{abstract}

\normalsize
\baselineskip=11pt
\begin{center}
{\bf Keywords}
\ \ \ \ \ \ \\\ \ \ Knot, relative group homology, hyperbolicity, Malnormal subgroups, quandle, \ \ \
\end{center}

\tableofcontents

\large
\baselineskip=16pt
\section{Introduction.}\label{s11}
In the study of an oriented compact 3-manifold $M$ with torus boundary,
the (relative) fundamental homology 3-class $ [M, \partial M] $ in $ H_3( M, \partial M ;\Z ) \cong \Z $ essentially has basic information.
To further analyze it quantitatively,
relative viewpoints are practice: 
To describe this, we suppose a pair of groups $K \subset G$ and a homomorphism $f : \pi_1 (M) \ra G$,
which sends every boundary element in $\pi_1 (M)$ to some element in $K $.
Then, for any relative group 3-cocycle $ \theta \in H^3(G,K;A)$ with local coefficients (see \eqref{k1} for the explicit definition),
we can consider the following pairing valued in the coinvariant $A_G= H_0(G;A )$:
\begin{equation}\label{ssa}\langle \theta , \ f_* [M, \ \partial M ] \rangle \in A_G= A/\{ a - g\cdot a \}_{a \in A , \ g\in G}.
\end{equation}
Thus setting appears in topics in low dimensional topology. For example, the volumes and the Chern-Simons invariants of hyperbolic manifolds can be
described as this pairing, where $G= SL_N(\mathbb{C})$ (see \cite{Neu,Zic,GTZ}).
Furthermore, this pairing includes the triple cup products of the form $ \theta=a\smile b \smile c$; see \cite{MS,Nos9}.
In addition, if $G$ is of finite order, the pairing is called {\it the Dijkgraaf-Witten invariant} \cite{DW}, as a toy model of TQFT.


However, the pairing defined in the general situation is often considered to be uncomputable.
Actually, we come up against difficulties:
First, it is troublesome to explicitly describe a (truncated) triangulation in $M $, which represents the 3-class $f_* [M, \partial M ] $.
Moreover, the 3-class $f_* [M, \partial M ] $ is not always unique, but depends on the choices of $2^{|\pi_0(\partial M)|}$ decorations, as mentioned in \cite[\S 5]{Zic}.
Next, the boundary condition is important; when dealing with the condition,
we mostly need long and verbose explanations, as in \cite{DW,Neu,Zic,Mor,Kab} (cf. Homotopy quantum field theory \cite{Tur}).
In addition, since the relative homology is defined from some projective resolution (see \S \ref{s11}),
it is essentially a critical problem to choose an appropriate resolution and to find a presentation of the 3-cocycle $ \theta$.
Further, even if we can succeed in doing so on $\theta$, 
such presentations are mostly quite intricate.

\

Nevertheless, this paper develops a diagrammatic computation of the pairing, according to geometric structures of links.
Precisely, let $L\subset S^3 $ be an oriented link in the 3-sphere, and
$M=E_L $ be the 3-manifold which is obtained from $S^3$ by removing an open tubular neighborhood of $L$, i.e., $ E_L=S^3 \setminus \nu L.$
When $L$ has ``malnormal property" as a broad class (i.e., hyperbolic links),
we succeed in giving a computation of the pairing; see Theorem \ref{mainthm1} (however, in cable cases, we need some conditions; see \S \ref{2734}, cf. cabling formula \cite{Ishi}).
As seen in \S \ref{23433}, the result is summarized to that,
if we know the presentation of $\theta $ and the JSJ decomposition of $L $, we can compute the pairing from a diagram. Here the point is that the construction needs no triangulation of $E_L$.

Let us roughly explain our approach to the theorem.
As seen in \cite{CKS,IK,Nos7,Nos3}, quandle theory \cite{Joy} and homology \cite{CKS} have advantages to some diagrammatic computation in knot theory.
Thus, inspired by the works \cite{IK,NM},
we will construct a bridge between the quandle and group homologies using chain maps,
in order to reduce the 3-class $[M, \partial M]$ to a quandle 3-class. 
However, as a technical reason appearing in the scissors congruence in \cite{Dup,Neu},
the bridge factors through Hochschild relative homology \cite{Hoc}, and
is formulated as a zigzag sequence; see \eqref{tukau2} in \S \ref{s11}.
As a solution, this paper points out (Theorem \ref{mainthm133})
that malnormal property of groups is a suitable condition for obtaining a quasi-inverse in the zigzag; see \S \ref{23433}
(Here, we are based on \cite{Sim,HW,NM} which studied malnormal property of knots).
To summarize, composing the chain maps gives the required computation of the pairing.

In applications, we obtain four advantages from the approaches as follows.
First, the above composite gives an algorithm to describe algebraically the fundamental 3-class $[M, \ \partial M ] $ for hyperbolic links; see \S \ref{238}.
Next, our results emphasize topological advantages of the quandle cocycle invariant \cite{CKS}.
Especially, for malnormal pairs ($G,K$), we will give a method to produce many quandle cocycles, and obtain a simple formulation of computing the pairing (see Theorem \ref{mainthm133}).
The third is a result of determining the third homology of the link quandle $Q_L$, where $L$ is a knot or a hyperbolic link.
This quandle $Q_L$ is analogous to the fundamental groups of $S^3 \setminus L$ (defined in \cite{Joy}), and plays a key role in the proof of the main theorems;
see Appendix \ref{2343136} for details.
The forth one is that our theorem is a generalization and application of the work \cite{IK}.
To be precise, while the paper \cite{IK} showed the same theorem for only $G=SL_2(\mathbb{C})$ and hyperbolic links,
our theorem points out the generalization applicable to groups $K \subset G$ with malnormality. 

This paper is organized as follows.
Section 2 states the theorems.
Section 3 introduces relative group homology,
and Section 4 reviews the quandle homology \cite{CKS} and Inoue--Kabaya chain map \cite{IK}.
Section 5 explains the algorithm to describe $[M, \ \partial M ] $, and gives an example from the figure eight knot.
Section 6 proves the main theorem, and Section 7 discusses cable knots.
Appendix A computes the third homology of the link quandles $Q_L$ for some links.


\section{Statements; the main results.}\label{s13}

This section states the main results.
For this, we fix terminology 
throughout this paper.

\vskip 0.49pc

\noindent
{\bf Conventional notation and assumption throughout this paper.}
\begin{itemize}
\item By a link we mean
a $C^{\infty}$-embedding of solid tori into the 3-sphere $S^3$ or into the solid torus $D^2 \times S^1$.
We suppose an orientation of $L$, and denote $\pi_0(L ) $ by $\# L \in \Z.$
\item For short, the fundamentals group $\pi_1(S^3 \setminus L)$ is abbreviated to $\pi_L$,
and the complement space $ S^3 \setminus L$ is often done to $E_L.$
\item Furthermore, fix a pherihedral group, $ \mathcal{P}_{\ell}$, with respect to $\ell \leq \# L $,
which is generated by a meridian-longitude pair $(\mathfrak{m}_{\ell},\mathfrak{l}_{\ell})$.
\item Moreover, we fix a group $G$ and subgroups $K_{\ell}$ with $ \ell \leq \# L$,
and suppose a homomorphism $ f : \pi_1( S^3 \setminus L ) \ra G$ such that $ f (\mathcal{P}_{\ell} ) \subset K_{\ell} $.
\end{itemize}

In this situation,
although it seems easy to define a pushforward of the 3-class $f_*( [E_L ,\partial E_L])$ in the relative group homology $H_3(G,K_1, \dots, K_{\# L} ;\Z)$,
it is known (see \cite[\S 5]{Zic} or \cite[\S 10]{NM}) that a canonical definition of such pushforwards depends
on the choice of ``$f$-decorations".
However, if $E_L$ is decomposed as a union of complete hyperbolic 3-manifolds, the 3-class $f_*( [E_L ,\partial E_L])$ is known to be well-defined (see \cite[\S 5]{Zic}).
Therefore, similarly to \eqref{ssa}, we can consider the pairing between this 3-class and a 3-cocycle of $G$ relative to $(K_{\ell})_{\ell \leq \# L} $.

\subsection{The first statement.}\label{s11131}
We will set up some terminology, and state Theorem \ref{mainthm1}.
Take the set of the arcs of $D$ and the set of the complementary regions of $D$, denoted by $\mathrm{Arc}_D $ and $\mathrm{Reg}_D $, respectively.
Given a map $ \phi: \mathrm{Reg}_D \times \mathrm{Arc}_D \times \mathrm{Arc}_D \ra A $,
let us consider a weight sum of the form
\begin{equation}
\label{1122} \Psi_{\phi } ( D) := \sum_{\tau}\epsilon_{\tau}\cdot \phi (x_{\tau },y_{\tau },z_{\tau }) \in A
\end{equation}
running over all the crossings $\tau $ of $D$, where
$x_{\tau }$, $y_{\tau }$, and $z_{\tau }$ are the region and the arcs shown in Figure \ref{fig.color43}, and
$\epsilon_{\tau} \in \{ \pm 1\}$ is the sign of $\tau$.
Then, the main statement is as follows:
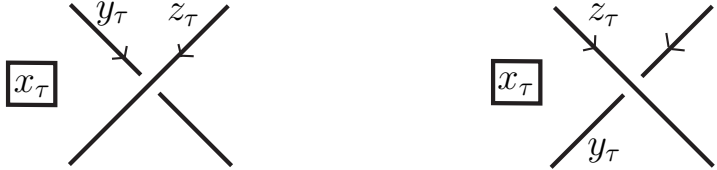
\begin{figure}[htpb]
\begin{center}
\begin{picture}(100,80)
\put(-82,35){\Large $x_{\tau } $}
\put(-51,63){\Large $y_{\tau } $}
\put(-24,62){\Large $z_{\tau } $}
\put(-90,33){\pc{kouten32}{0.40003285}}

\put(101,36){\Large $x_{\tau } $}
\put(135,62){\Large $z_{\tau } $}
\put(135,11){\Large $y_{\tau } $}
\end{picture}
\end{center}
\vskip -1.737pc
\caption{Positive and negative crossings with labeled regions and labeled arc.
\label{fig.color43}}
\end{figure}
\begin{thm}[See \S \ref{s33} for the proof.]\label{mainthm1}
Assume that $L$ is either a prime knot whish is not cable \footnote{A knot $K$ is {\it cable}, if there is a solid torus $V$ embedded in $S^3$ such that $V$ contains $K$ as the $(q)$-torus knot for some $p,q\in \Z.$
Furthermore, a knot $L$ is {\it prime}, if it cannot be written as the knot sum of two non-trivial knots.
Incidentally, the assumption in the theorems is inspired by Theorem \ref{tuu333} of \cite{HW} and the JSJ decomposition; see \S \ref{23433}.}
or a hyperbolic link.
Then, for any relative group 3-cocycle $ \theta \in H^3(G;K_1, \dots, K_{\# L};A)$,
there is a map
$$ \phi_{\theta }: \mathrm{Reg}_D \times \mathrm{Arc}_D \times \mathrm{Arc}_D \lra A $$
for which the following equality holds in the coinvariant $A_G$:
\begin{equation}
\label{11} \langle \ \theta , \ f_*[E_L, \ \partial E_L ] \rangle = \Psi_{\phi_{\theta } } ( D) \in A_G. \end{equation}
\end{thm}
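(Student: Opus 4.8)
The plan is to transport the relative fundamental 3-class $f_*[E_L,\partial E_L]$ from relative group homology into the quandle homology of the link quandle $Q_L$, where it acquires an explicit diagrammatic representative read off from the crossings of $D$, and then to pull the cocycle $\theta$ back along the same route. First I would equip $D$ with a shadow coloring: color each arc by its tautological element of $Q_L$ and color each complementary region by an element of the $G$-set attached to the representation $f$ and the peripheral data $K_\ell\subset G$, which are exactly the labels $x_\tau,y_\tau,z_\tau$ of Figure \ref{fig.color43}. The crossings then assemble into a shadow quandle 3-cycle $c_D$, and the weighted sum \eqref{1122} is, by its very construction, the pairing of $c_D$ against the shadow 3-cocycle associated with $\phi$. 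Thus the theorem splits into two tasks: (i) show that under the Inoue--Kabaya chain map \cite{IK} and the zigzag of chain maps of \S\ref{s11}, the class $[c_D]$ is carried to $f_*[E_L,\partial E_L]$; and (ii) pull $\theta$ back to a shadow cocycle whose values on triples (region, arc, arc) furnish the required map $\phi_\theta$.

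The crux is task (i). The difficulty is that the bridge between group and quandle homology is only a \emph{zigzag}, passing through Hochschild relative homology \cite{Hoc}, so one arrow points the wrong way and $[c_D]$ cannot be compared with $f_*[E_L,\partial E_L]$ directly. This is precisely where the hypothesis on $L$ is used. Since $L$ is a prime non-cable knot or a hyperbolic link, its peripheral structure is malnormal in the sense of \cite{Sim,HW,NM} --- this is the content of Theorem \ref{tuu333} combined with the JSJ decomposition --- so by Theorem \ref{mainthm133} the offending arrow admits a quasi-inverse on degree-3 homology. The zigzag then induces a genuine map on 3-classes, along which I would verify $[c_D]\mapsto f_*[E_L,\partial E_L]$ by identifying both sides with the distinguished generator of $H_3(Q_L)$ determined in Appendix \ref{2343136}; the well-definedness of the target, i.e. the independence of the $f$-decorations, is guaranteed by the hyperbolic/malnormal hypothesis recalled in \S\ref{s13}.

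Task (ii) is then formal: dualizing the same zigzag sends $\theta\in H^3(G;K_1,\dots,K_{\#L};A)$ to a shadow quandle 3-cocycle of $Q_L$ with coefficients in $A$, and reading off its values on triples (region, arc, arc) defines $\phi_\theta$. Evaluating this cocycle on $c_D$ and using the adjunction between the induced map on cycles and its transpose on cocycles gives
\[
\langle \theta,\ f_*[E_L,\partial E_L]\rangle=\langle \phi_\theta,\ c_D\rangle=\Psi_{\phi_\theta}(D)
\]
in the coinvariant $A_G$, which is the assertion \eqref{11}. I expect the main obstacle to be task (i), and inside it the quasi-inverse of Theorem \ref{mainthm133}: one must check that malnormality forces the Hochschild-type relative complex to compute the same degree-3 homology as the quandle complex, and that under this identification the abstract pushforward of the topological fundamental class really matches the combinatorial cycle $c_D$ coming from the diagram.
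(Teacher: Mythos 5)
Your overall route is the same as the paper's: read a shadow-colored 3-cycle off the diagram, push it through the Inoue--Kabaya map $\varphi_3$ and a quasi-inverse $\beta$ of $\alpha$ obtained from malnormality, and pair with the pullback of $\theta$. But there is a genuine gap at the step you call ``verify $[c_D]\mapsto f_*[E_L,\partial E_L]$ by identifying both sides with the distinguished generator of $H_3(Q_L)$.'' Both $H_3^Q(Q_L)$ and $H_3^{\rm gr}(\pi_L,\partial\pi_L)$ being infinite cyclic only tells you that $\beta\circ\varphi_3([\mathcal{S}_{\mathrm{id}_{Q_L}}])=N_L\,[\pi_L,\partial\pi_L]$ for some integer $N_L$; the whole content of the theorem is that $N_L=\pm1$, and nothing in your argument rules out $N_L=0$ or $N_L=2$. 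The paper establishes this in two separate ways depending on the case: for hyperbolic links it invokes the Inoue--Kabaya identity \eqref{kaa}, i.e.\ the nontrivial geometric fact that the Chern--Simons pairing computed via $\beta\circ\varphi_3$ agrees with the honest pairing against $[E_L,\partial E_L]$; for the remaining prime non-cable knots it runs the JSJ decomposition through the diagram \eqref{aa22} and uses the explicit computation of $H_3^Q(Q_W)\cong H_3^Q(Q_{L'})\cong\Z^2$ together with the matrices of $i_*^Q$ and $j_*^Q$ from Lemma \ref{4321} to transfer surjectivity from the hyperbolic piece. Neither input appears in your sketch, and without one of them the normalization cannot be concluded.

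Two smaller points. First, you cite Theorem \ref{mainthm133} to produce the quasi-inverse, but that theorem is downstream of Theorem \ref{mainthm1} in the paper's logic (its proof uses \eqref{kaa2}); the correct source of the quasi-inverse is Proposition \ref{tukau} together with Examples \ref{tuu} and \ref{knotmal}. Second, your claim that the hypothesis ``prime non-cable or hyperbolic'' forces malnormality of the peripheral structure is not accurate as stated: by Theorem \ref{tuu333} malnormality fails for torus knots, and for satellite knots one must pass to the JSJ pieces, which is exactly why the paper's proof splits into the hyperbolic case and the case treated via \eqref{13882} and \eqref{aa22} rather than applying malnormality to $\pi_L$ directly.
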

In conclusion, the right hand side ensures a computation of the pairing, with describing no triangulation.
Here, it is important to express $ \phi_{\theta }$ concretely;
in Sections \ref{23433}--\ref{s33}, we give a concrete expression of $ \phi_{\theta }$ for such links.
However,
the presentation essentially depends on the link type of $L$, and is not always simple.
In fact, even for the figure eight knot $L$, the map $ \phi_{\theta }$ forms a sum of many terms; see Section \ref{238}.

Incidentally, Section \ref{2734} similarly discusses the cable cases, and concludes a similar statement (Theorem \ref{mainthm5}).
Here, we see that the statement essentially should be considered modulo some integers,
and that the pairing has no more information than the homology of cyclic groups.


\subsection{The second statement from malnormality and transfer.}\label{s111312}
In contrast, 
we will consider some conditions to get the map $ \phi_{\theta }$ and
the diagrammatic description in a concrete way.
Here, the subgroups $K_1,\dots, K_{\# L} \subset G$ are said to be {\it malnormal (in $G$)}, if they satisfy
\begin{enumerate}[($\star$)]
\item For any $(i,j) \in I^2$ and any $g \in G $ with $ g \not {\!\! \in } K_j$, the intersection $ g^{-1} K_i g \cap K_j $ equals $\{ 1_G \}.$
\end{enumerate}
The papers \cite{HWO,HW} give such examples, as in Gromov hyperbolic groups.
Furthermore, as in \cite{Agol,Wise}, the malnormality plays a key role in studying the virtual Haken conjecture and ``the Malnormal Special Quotient Theorem".

We will give a description of the pairing, by using quandle theory.
For this, we now review a quandle and colorings.
A {\it quandle} \cite{Joy} is a set, $X$, with a binary operation $\tri : X \times X \ra X$ such that
\begin{enumerate}[(I)]
\item The identity $a\tri a=a $ holds for any $a \in X. $
\item The map $ (\bullet \tri a ): \ X \ra X$ that sends $x $ to $x \tri a $ is bijective, for any $a \in X$.
\item The distributive identity $(a\tri b)\tri c=(a\tri c)\tri (b\tri c)$ holds for any $a,b,c \in X. $
\end{enumerate}
Let $X$ be a quandle. An $X$-{\it coloring} of $D$ is a map $\mathcal{C}: \mathrm{Arc}_D \to X$
such that $\mathcal{C}(\alpha_{\tau}) \lhd \mathcal{C}(\beta_{\tau}) = \mathcal{C}(\gamma_{\tau})$
at each crossings $\tau$ of $D$ illustrated as Figure \ref{fig.color}. Further, for $x_0 \in X$, a {\it shadow coloring} is a pair of an $X$-coloring $\mathcal{C} $ and a map $\lambda : \mathrm{Reg}_D \to X$ such that
the unbounded exterior region is assigned by $ x_0$ and that
if two regions $R $ and $R ' $ are separated by an arc $\delta $ as shown in the right of Figure \ref{fig.color}, then $\lambda(R ) \lhd \mathcal{C} (\delta )= \lambda (R ')$.
Here, notice that the assignment of every region is that of the unbounded region, by definition.
Thus, from arbitrary $x_0 \in X$ and $X$-coloring, we obtain uniquely a shadow coloring such that the unbounded region is labeled by $x_0$.

%

\vskip -1.419937pc
\begin{figure}[htpb]
\begin{center}
\begin{picture}(100,70)
\put(-122,46){\Large $\alpha_{\tau} $}
\put(-66,45){\Large $\beta_{\tau} $}
\put(-66,12){\Large $\gamma_{\tau} $}
\put(-82,1){\pc{kouten22}{0.2530174}}

\put(65,3){\pc{shadow.col.2}{0.266}}
\put(-50,16){$\mathcal{C}(\alpha_{\tau}) \lhd \mathcal{C}(\beta_{\tau}) = \mathcal{C}(\gamma_{\tau})$}

\put(88,43){\large $ \delta$}
\put(123,21){\large $R $}
\put(121,46){\large $ R ' $}
\put(164,35){\large $ \lambda(R) \lhd \mathcal{C} (\delta )= \lambda(R') $}
\end{picture}
\end{center}
\vskip -1.9937pc
\caption{The coloring conditions at each crossing $\tau$ and around each arcs. 
\label{fig.color}}
\end{figure}
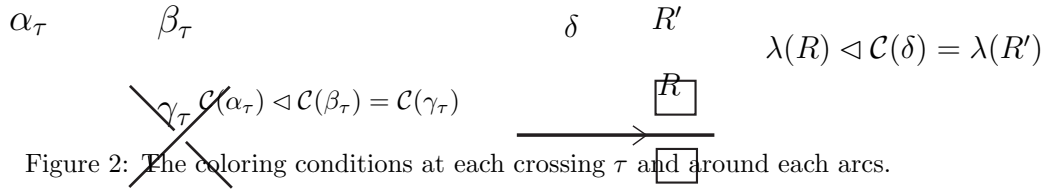
\noindent

This paper mainly deals with the following class of quandles.
\begin{exa}[\cite{Joy}]\label{exaqSGH}
This example is due to Joyce \cite{Joy}.
Under the above settings $(f, G, K_{\ell})$, let $X$ be the union of the left quotients $ (K_{\ell }\backslash G)$, that is,
$$ X= \sqcup_{\ell=1}^{ \# L} (K_{\ell }\backslash G).$$
Let $k_{\ell}$ be $ f ( \mathfrak{m}_{\ell}) \in K_{\ell}. $
Assume that $ k_{\ell}$ is commutative with any elements of $K_{\ell}$. 
Then, the union $X$ is made into a quandle under the operation
\begin{equation}
\label{eq.gh}
[K_{\ell}x ]\lhd [K_{\ell'}y]= [K_{\ell} ^{-1}x y^{-1} k_{\ell'} y],
\end{equation}
for any $x, y \in G$. 
In what follows, we will write the triple $(G,\mathcal{K}) $ for this quandle.

Furthermore, as is known (see \cite[Appendix]{Nos7} for the details),
the homomorphism $f $ admits uniquely an $X$-coloring $ \mathcal{C}$ with $ \mathcal{C}(\mathfrak{m}_{\ell}) =k_{\ell}$
via Wirtinger presentation, where $ \mathcal{C}(\gamma)$ is defined to be $f(\gamma)^{-1 } k_{\ell} f(\gamma) $
if $\gamma$ lies in the $\ell$-the component of $L$.
Hence, by applying $k_1$ to $x_0$, we have the associated shadow coloring $\mathcal{S}:
\mathrm{Reg}_D \times \mathrm{Col}_D \to X $.
\end{exa}

Then, the main theorem in this subsection is stated as follows:
\begin{thm}\label{mainthm133}
Suppose that $ k_{\ell}$ commutates with any elements of $K_{\ell}$,
and that $L$ is a hyperbolic link or a prime knot which is neither a cable knot nor a torus knot, as in Theorem \ref{mainthm1}.
Furthermore, assume one of the following two:
(i) $(G,K_1 ,\dots, K_{\# L})$ is malnormal. \ (ii)
$K_1, \dots, K_{\# L}$ are of finite order and 
all the order $|K_i|$ is invertible in the coefficient group $A$.

Then, any relative group 3-cocycle of $(G,K_1 ,\dots, K_{\# L})$ is
represented as a map $ \theta: X^4 \ra A$ such that,
the fundamental 3-class $\langle \theta , \ f_* [E_L, \ \partial E_L ] \rangle$ is equal to
the sum
$$\sum_{\tau}\epsilon_{\tau} \bigl( \theta(k_1, a_{\tau }, b_{\tau },c_{\tau }) - \theta(k_1, a_{\tau } \lhd b_{\tau } , b_{\tau },c_{\tau }) -
\theta(k_1, a_{\tau } \lhd c_{\tau } , b_{\tau }\lhd c_{\tau },c_{\tau })+\theta(k_1, (a_{\tau } \lhd b_{\tau } ) \lhd c_{\tau } , b_{\tau }\lhd c_{\tau } ,c_{\tau }) \bigr) . $$
running over the all crossings $\tau $.
Here, for the assignment $(x_{\tau },\ y_{\tau },\ z_{\tau })$ around $\tau$ as in Figure \ref{fig.color43}, we define
$(a_{\tau },b_{\tau },c_{\tau }) \in X^3$ by setting $\bigl( \mathcal{S}( x_{\tau }) ,\ \mathcal{S}(y_{\tau }),\ \mathcal{S}(z_{\tau }) \bigr) $.

\end{thm}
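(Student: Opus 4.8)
The plan is to deduce Theorem \ref{mainthm133} from Theorem \ref{mainthm1} by making the abstract map $\phi_{\theta}$ explicit through the quandle $X=(G,\mathcal{K})$ of Example \ref{exaqSGH}, and by upgrading the zigzag \eqref{tukau2} to a genuine isomorphism in degree three under either hypothesis (i) or (ii). The whole argument therefore splits into a cochain-level computation and a homological-algebra input.

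First I would carry out the cochain-level translation. Starting from a relative group 3-cocycle $\theta$ defined via \eqref{k1}, I would pull it back along the Inoue--Kabaya chain map \cite{IK} together with the Hochschild-relative chain maps appearing in the zigzag \eqref{tukau2}. The Inoue--Kabaya map sends a shadow-colored diagram to a sum of (truncated) tetrahedra, each labelled by cosets in $X$; dualizing and evaluating $\theta$ on these tetrahedra produces, crossing by crossing, a function of the three shadow colors $(a_{\tau},b_{\tau},c_{\tau})=(\mathcal{S}(x_{\tau}),\mathcal{S}(y_{\tau}),\mathcal{S}(z_{\tau}))$ together with the base color $k_1=x_0$. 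I would then verify directly that the four tetrahedra sitting at a crossing, which arise from the octahedral/scissors decomposition of \cite{Dup,Neu}, carry exactly the arguments and signs $+\theta(k_1,a_{\tau},b_{\tau},c_{\tau})$, $-\theta(k_1,a_{\tau}\lhd b_{\tau},b_{\tau},c_{\tau})$, $-\theta(k_1,a_{\tau}\lhd c_{\tau},b_{\tau}\lhd c_{\tau},c_{\tau})$, $+\theta(k_1,(a_{\tau}\lhd b_{\tau})\lhd c_{\tau},b_{\tau}\lhd c_{\tau},c_{\tau})$. This identifies $\Psi_{\phi_{\theta}}(D)$ with the displayed summand as a shadow-quandle-cocycle weight, once $\phi_{\theta}$ is taken to be the quandle cochain coming from $\theta$.

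Second, and this is where the hypotheses enter, I would show that \emph{every} class in $H^3(G,K_1,\dots,K_{\#L};A)$ is realized by such a quandle cochain $\theta:X^4\to A$, i.e.\ that the zigzag admits a quasi-inverse in degree three. Under (i) I would invoke malnormality ($\star$) to make the relevant bar-resolution/spectral-sequence argument collapse: the vanishing $g^{-1}K_ig\cap K_j=\{1_G\}$ for $g\notin K_j$ forces the off-diagonal Hochschild terms to vanish at the chain level, so that the Inoue--Kabaya map becomes an isomorphism onto the relevant summand; here I would lean on the malnormality results of \cite{Sim,HW,NM} and on the computation of the third homology of the link quandle $Q_L$ in Appendix \ref{2343136}. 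Under (ii) I would instead construct the quasi-inverse by a transfer/averaging map over each finite $K_i$, which is well defined precisely because $|K_i|$ is invertible in $A$; the composite with the Inoue--Kabaya map is then multiplication by a unit, yielding the desired splitting.

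Finally I would assemble the two steps. By the quasi-inverse the given $\theta$ is cohomologous to a quandle cochain whose crossing weights are the four-term expression above, and by Theorem \ref{mainthm1} these weights sum over all $\tau$, with signs $\epsilon_{\tau}$, to the pairing $\langle\theta,f_*[E_L,\partial E_L]\rangle$ in $A_G$; the shadow coloring $\mathcal{S}$ attached to $f$ in Example \ref{exaqSGH} is exactly the one representing the fundamental 3-class. The main obstacle is the second step in case (i): checking that malnormality genuinely produces a quasi-inverse, that is, that the off-diagonal intersection terms vanish at the chain level and that no higher obstruction survives in degree three. This is the new homological input, and it is where the hyperbolicity/JSJ hypothesis on $L$ and the third quandle homology of $Q_L$ must be combined; by contrast, case (ii) is comparatively routine.
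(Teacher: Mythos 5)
Your proposal is correct and follows essentially the same route as the paper: pull $\theta$ back through the zigzag \eqref{tukau2} via the Inoue--Kabaya map (whose four terms with base point $p=k_1$ give exactly the displayed summand), obtain the quasi-inverse $\beta'$ of $\alpha$ from Proposition \ref{tukau} in case (i) and from the transfer argument of Proposition \ref{m44} in case (ii), and then conclude by functoriality together with the fact $N_L=\pm 1$ established in the proof of Theorem \ref{mainthm1}. The ``main obstacle'' you flag in case (i) is precisely what Proposition \ref{tukau} already supplies (freeness of the $G$-action on pairwise-distinct tuples in $C_*^{\neq}(Y)$), so no further work is needed there.
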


\section{Relative group homology.}\label{s11}
First, we outline the proof of the theorems.
The key step is to introduce three chain groups, and
two chain maps (see \S\S \ref{s11}--\ref{s112}). 
These chain maps will be summarized to 
\begin{equation}\label{tukau2}
C_*^R (X ) \xrightarrow{\ \ \varphi_* \ \ } C_*^{\Delta } (X ; \Z ) \otimes_{\Z [\As(X)] } \Z \xleftarrow{\ \ \alpha \ \ } C_*^{\rm gr } (G ,\mathcal{K} ; \Z ) .
\end{equation}
Roughly speaking, the right hand side denominates relative groups cocycles (see \S \ref{s3s3w}),
and the left one can be diagrammatically described (see \S \ref{s112}). Thus, if we construct a chain map from the middle term to the right hand side, 
we can obtain diagrammatic computations as in the theorems.
However, as seen in \S \ref{s112}, the existence of such a chain map depends on some properties of knot type.
Thus, in the proof, we need careful verifications to deal with the chain maps.

To accomplish the outline in details
we first introduce relative group homology in the family version; see \S \ref{s3s3w}. After that,
we will give a key proposition \ref{tukau}, and define the chain map $\alpha$. 

Throughout this section, we fix a group $G$ and 
subgroups $K_1, \dots, K_m \subset G$ as above.
Furthermore, we denote the index set $\{1, \dots, m\} $ by $I$, and denote ($K_1, \dots, K_{m }$) by $\mathcal{K}$, for short. 
\subsection{Preliminaries; Two versions of group relative homology.}\label{s3s3w}
The relative group homology is usually defined from a group pair $K \subset G$; see, e.g., \cite[\S 3]{NM} or \cite{Zic}.
However, this paper generalizes the relative homology into the family version so as to deal with links. 

Consider the union of the left quotients, $\sqcup_{ i \in I } (K_i \backslash G)$,
and set up the subgroup of the form
\begin{equation}\label{k3331} C_n^{\rm red}(G, I):= \bigl\{ \ (a_1, \dots, a_m ) \in \Z[G^{n+1}]^m \ \bigr| \ \sum_{i \in I} a_i = 0\ \bigr\}.\end{equation}
Then, letting $n=0$, we canonically have a right $G$-module homomorphism
\[ P_I: C_0^{\rm red}(G, I) \lra \Z[ \sqcup_{ i \in I } (K_i \backslash G) ]. \]

We define {\it the relative group homology} of $(G,\mathcal{K})$ to be the torsion $\mathrm{Tor}_*^{\Z[G]}(\Coker (P_I),M) $,
where $M$ is a left $\Z[G]$-module.
Precisely, taking the augmentation map $\varepsilon: \Z [ \Coker (P_I)] \ra \Z$ with a choice of a projective resolution
$$ \mathcal{P}_*: \ \ \ \cdots \xrightarrow{\ \ \partial_{n+1} \ \ } \mathcal{P}_n \xrightarrow{\ \ \partial_{n} \ \ }
\cdots \stackrel{\partial_2}{\lra} \mathcal{P}_1 \stackrel{\partial_1}{\lra} \Coker (P_I) \xrightarrow{\ \ \varepsilon \ \ } \Z \ \ \ \ \ \ \ ({\rm exact}) ,$$
as right $\Z[G]$-modules, the relative homology is defined to be
$$ H_n (G, \K ;M ) :=H_n( \mathcal{P}_* \otimes_{\Z[G^n]} M, \partial_* ). $$
Dually, we can define the cohomology as $\mathrm{Ext}^*_{\Z[G]}(\Ker (\varepsilon ) ,M). $
For enough projectivity, we now cite an example of $ \mathcal{P}_* $.

\begin{exa}[{Mapping cone}]\label{rei1}
For any set $B$, we define the map $ \partial^{\Delta}_n :\Z[B^{n+1}] \ra \Z[B^{n}] $ by setting
\begin{equation}\label{g16} \partial_n^{\Delta}(x_0, \dots, x_n) = \sum_{i: \ 0 \leq i \leq n} (-1)^i (x_0, \dots, x_{i-1}, x_{i+1}, \dots, x_n )\in \Z[B^{n}] .
\end{equation}
Let $\iota_{\ell} : K_{\ell}\ra G$ be the inclusion.
Furthermore, according to \cite[\S 2]{Zic}), for $n>1,$ consider the following free $G$-module:
$$ C^{\rm gr}_n (G, K_{\ell}) := \bigl(( \Z[G^{n+1} ] \otimes_{\Z[G ]} \Z) \oplus ( \Z[( K_{\ell})^{n}] \otimes_{\Z[K_{\ell} ]} \Z )\bigr) \otimes_{\Z} \Z[G ] . $$
Furthermore, when $n=1,$ we define $ C^{\rm gr}_1 (G, \mathcal{K})=C_1(G)=\Z[G^2]$.
Then, we can easily see that the following assignments define a differential on these modules:
$\partial_1:= \partial_1^{\Delta}$ and
when $n>1,$
$$ \partial_n \bigl( \vec{g}, \ \vec{k}_\ell \bigr):= \bigl( \partial^{\Delta}_n( \vec{g}) + (-1)^n \iota_{\ell} ( \vec{k}_\ell ) , \
\partial^{\Delta}_{n-1}(\vec{k_\ell} ) \bigr)$$
for any $(\vec{g}, \ \vec{k}_\ell ) \in G^{n+1} \times K_\ell^{n} . $
From the definition in \eqref{k3331}, it is sensible to consider a canonical inclusion
\begin{equation}\label{gigi} P_n^{(I)} : C_n^{\rm red}(G, I) \lra \bigoplus_{j: 1 \leq j \leq n } C^{\rm gr}_n (G, K_j ).
\end{equation}
Then, we define the pair ($ C^{\rm gr}_* (G, \mathcal{K}) , \partial_* $) to be the cokernel of $P_n^{(I)} $.
Thus, we define $H_n^{\rm gr} ( G , \mathcal{K} ; M)$ to be the homology of the complex $( C^{\rm gr}_* (G, \mathcal{K})\otimes_{\Z[G]} M , \partial_*) $.

\begin{prop}\label{tuk32au}
Then, the pair ($ C^{\rm gr}_* (G, \mathcal{K}) , \partial_* $) gives a free resolution of $ \Coker (P_I) $.
\end{prop}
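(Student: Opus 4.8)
The plan is to read off the statement from the single\nobreakdash-subgroup resolutions of Example~\ref{rei1} by means of the defining cokernel sequence. Since $C^{\rm gr}_*(G,\mathcal{K})$ is by definition the cokernel of the inclusion $P^{(I)}_*$, there is a short exact sequence of chain complexes of right $\Z[G]$-modules
\begin{equation*}
0\lra C^{\rm red}_*(G,I)\xrightarrow{\ P^{(I)}\ }\bigoplus_{j\in I}C^{\rm gr}_*(G,K_j)\lra C^{\rm gr}_*(G,\mathcal{K})\lra 0,
\end{equation*}
and I would extract both the freeness and the acyclicity of the quotient from the two outer complexes. Freeness is the easy half: each $\Z[G^{n+1}]$ and each induced term $\Z[K_j^{n}]\otimes_{\Z[K_j]}\Z\otimes_{\Z}\Z[G]$ is free over $\Z[G]$, and forming the cokernel of $P^{(I)}_n$ only collapses the $m$ copies of the $G$-part through the split surjection $(a_1,\dots,a_m)\mapsto\sum_i a_i$; hence $C^{\rm gr}_n(G,\mathcal{K})$ is again free, being $\Z[G^{n+1}]$ plus the untouched $K_j$-parts.

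For the homology I would compute the two outer terms. On the right, each $C^{\rm gr}_*(G,K_j)$ is the single-pair relative resolution of Example~\ref{rei1}, i.e.\ a free resolution of $\Z[K_j\backslash G]$ (cf.\ \cite{Zic}); in particular its homology is concentrated in degree $0$, equal to $\Z[K_j\backslash G]$. On the left, the reduced complex is exactly the kernel $C^{\rm red}_*(G,I)=\Ker\bigl(\Sigma\colon\Z[G^{*+1}]^m\to\Z[G^{*+1}]\bigr)$ of the summation map $\Sigma$, and since $\Sigma$ is a split surjection of homogeneous bar resolutions, $C^{\rm red}_*(G,I)$ is isomorphic, as a complex, to the sum of $m-1$ copies of the bar resolution of $G$; hence its homology is $\Z^{m-1}$ in degree $0$ and vanishes in positive degrees. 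Feeding these into the long exact homology sequence kills $H_n(C^{\rm gr}_*(G,\mathcal{K}))$ for every $n\geq 2$ at once, and (using $H_1$ of the middle term $=0$) leaves only
\begin{equation*}
0\lra H_1\bigl(C^{\rm gr}_*(G,\mathcal{K})\bigr)\lra H_0\bigl(C^{\rm red}_*(G,I)\bigr)\xrightarrow{\ P_*\ }\bigoplus_{j}\Z[K_j\backslash G]\lra H_0\bigl(C^{\rm gr}_*(G,\mathcal{K})\bigr)\lra 0
\end{equation*}
to analyse, where $P_*$ is the map induced by $P^{(I)}$.

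The crux is to identify $P_*$ with $P_I$. I would first use the augmentation to identify $H_0(C^{\rm red}_*(G,I))$ with $\{(n_i)\in\Z^m:\sum_i n_i=0\}$, and then observe that the composite $C^{\rm red}_0(G,I)\xrightarrow{P^{(I)}}\bigoplus_j C^{\rm gr}_0(G,K_j)\twoheadrightarrow\bigoplus_j\Z[K_j\backslash G]$ is precisely $(a_i)_i\mapsto(\pi_j(a_j))_j$, that is, $P_I$; hence $P_*$ has image $\Im(P_I)$ and cokernel $\Coker(P_I)$. Injectivity of $P_*$ is the last point: any class in $\Ker P_*$ is represented by a tuple whose $j$-th entry lies in $\Ker(\pi_j)$, which is contained in the augmentation ideal $I_G$, so all $n_i$ vanish and the class is $0$. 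Thus $H_1(C^{\rm gr}_*(G,\mathcal{K}))=\Ker P_*=0$ and $H_0(C^{\rm gr}_*(G,\mathcal{K}))=\Coker P_*=\Coker(P_I)$, which is the assertion.

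I expect the genuine difficulty to lie not in this homological bookkeeping but in the low-degree normalisation underlying Example~\ref{rei1}: one must check that dropping the degenerate $K_j$-term in degree $1$ (so that $C^{\rm gr}_1=\Z[G^2]$) is compatible with the cone differential and still yields a resolution of $\Z[K_j\backslash G]$, and that the three differentials match so that the displayed sequence really is one of chain complexes. Once the single-pair case and the splitting of $\Sigma$ are secured, the long exact sequence and the identification of $P_*$ with $P_I$ are formal. Alternatively, one could bypass the single-pair step by writing down an explicit contraction built from the simplicial contractions of the bar resolutions of $G$ and of each $K_j$, and verifying that it descends through $P^{(I)}$ to $C^{\rm gr}_*(G,\mathcal{K})$.
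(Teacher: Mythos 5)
Your proof is correct and follows essentially the same route as the paper: both arguments rest on the short exact sequence of complexes $0\to C^{\rm red}_*(G,I)\to\bigoplus_j C^{\rm gr}_*(G,K_j)\to C^{\rm gr}_*(G,\mathcal{K})\to 0$ together with the known exactness of the single-subgroup resolutions (cited from Zickert) and of the reduced complex. The paper compresses the remaining bookkeeping into an appeal to the five lemma on two augmented exact rows, whereas you run the long exact homology sequence explicitly, compute $H_0$ of the outer terms, and identify the connecting map with $P_I$ — a useful amplification of the same method rather than a different one.
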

\begin{proof} Since the statement with $|I|= 0, 1 $ is known (see \cite[Theorem 2.1]{Zic}), 
we may assume $|I|>1 $.
Then, we have two sequences with commutativity:
$${\normalsize
\xymatrix{
\cdots \ar[r] & C_n^{\rm red}(G, I) \ar@{_{(}->}[d]^{P_n^{(I)}} \ar[r]^{(\partial_n)^m} & C_{n-1}^{\rm red}(G, I) \ar@{_{(}->}[d]^{P_{n-1}^{(I)}} \ar[r]^{ \ \ \ (\partial_{n-1})^m} & \cdots \ar[r] & C_0^{\rm red}(G,\emptyset, I) \ar[r]^{ \ \ \ \ \ \ \ \ \ \ \varepsilon}
\ar@{_{(}->}[d] & \Z & (\mathrm{exact}) \\
\cdots \ar[r]& \bigoplus C^{\rm gr}_n (G, K_i ) \ar[r] & \bigoplus C^{\rm gr}_{n-1} (G, K_j ) \ar[r] & \cdots \ar[r] & \bigoplus \mathrm{Coker}(P_{\{ j\}}) \ar[r]^{ \ \ \ \ \ \ \ \ \ \ \varepsilon} & \Z & (\mathrm{exact})
}}
$$
Here, the exactness is ensured by the cases with $|I|\leq 1$.
The cokernel of the vertical map is exactly the complex ($ C^{\rm gr}_* (G, \mathcal{K}) , \partial_* $) from definitions.
Hence, by the five lemma, the cokernel gives a free resolution of $\Coker (P_I) $.
\end{proof}
Then, a standard discussion of mapping cones deduces the long exact sequence with $n \geq 2$:
\begin{equation}\label{g666} \cdots \ra H_{n+1}^{\rm gr} ( G, \mathcal{K} ; M) \stackrel{\delta_*}{\lra} \oplus_j H_{n}^{\rm gr} ( K_j ; M )
\xrightarrow{ \ \oplus (\iota_j)_*\ } H_n^{\rm gr} ( G; M) \lra H_{n}^{\rm gr} ( G, \mathcal{K} ; M) \ra \cdots .
\end{equation}

\end{exa}

\begin{rem}\label{tuk392}
Here, we mention a topological meaning of 
the relative homology $H_n( G , \mathcal{K} ; M ) $.
Let $ K(K_j,1 )$ and $ K(G,1 )$ be the Eilenberg-MacLane spaces of $K_j$ and of $ G$, respectively.
Let $ (\iota_j)_* : K(K_j,1 ) \ra K( G ,1 ) $ be the map induced from the inclusions.
Then, similarly to \cite[\S 2 and \S 5]{Zic}, we can see that $H_n( G , \mathcal{K} ; M ) $ is isomorphic to the homology of the mapping cone of $ \sqcup_j \iota_j : \sqcup_j K(K_j,1 )\ra K(G,1 )$
with local coefficients $M$.
\end{rem}

\begin{exa}\label{rei122}
As an example, let us describe 3-cocycles in the non-homogenous cochain, as $\Coker (P_I) $.
Specifically, a 3-cocycle of $G$ relative to $\mathcal{K} $ is represented by $m$ maps $ \theta_{\ell} : G^3 \ra M$ and $ \eta_{\ell} :(K_{\ell})^2 \ra M$
satisfying
the two equations
\begin{equation}\label{k1} g_1 \cdot \theta_{\ell} (g_2,g_3,g_4 ) - \theta_{\ell} (g_1 g_2,g_3,g_4 ) +\theta_{\ell} (g_1,g_2g_3,g_4 )- \theta_{\ell} (g_1, g_2,g_3g_4 )+ \theta_{\ell} (g_1, g_2,g_3 )=0,\end{equation}
\begin{equation}\label{k2} \theta_{\ell} (k_1,k_2,k_3 )= k_1\cdot \eta_{\ell} ( k_2,k_3 ) -\eta_{\ell} (k_1 k_2,k_3 )+\eta_{\ell} (k_1, k_2k_3 )-\eta_{\ell} (k_1 ,k_2 ),
\end{equation}
for any $g_i \in G$ and $ k_i \in K_{\ell}$, and $1 \leq \ell \leq m$.

\end{exa}

In another way, we will introduce the relative homology of $ \sqcup_{ i \in I } (K_i \backslash G)$, which is originally defined by Hochschild \cite{Hoc}.
Let $Y$ be the union $\sqcup_{ i \in I } (K_i \backslash G) $, and let $C_n^{\rm pre }(Y)$ be the free $\Z$-module generated by $(n+1)$-tuples $(y_0, y_1, \dots, y_n) \in Y^{n+1} $.
Consider the differential homomorphism defined by $\partial^{\Delta}_*$ as above.
As is basically known (see \cite{NM,Bro,Zic}), the chain complex $ (C_*^{\Delta}(Y) , \ \partial_{*}^{\Delta }) $ is acyclic.
From the natural action $Y \curvearrowleft G$,
let us equip $C_n^{\rm pre }(Y)$ with the diagonal action.
Furthermore, as a parallel to \eqref{k3331}, from the definition
of $C_n^{\rm red}(G, I)$,
we can similarly consider a $\Z[G] $-homomorphism $Q_n: C_n^{\rm red}(G, I) \ra C_n^{\rm pre }(Y)$.

\begin{defn}\label{de3553}
We define the chain complex $ (C_*^{\Delta}(Y) , \ \partial_{*}^{\Delta }) $ to be
the cokernel $\Coker(P_n) $, which is diagonally acted on by $G$.

Furthermore, $H_*^{\Delta}(Y;M)$ denotes the homology of the quotient complex $\Coker(P_n)\otimes_{\Z[G]} M$.
Namely, $H_*^{\Delta}(Y;M) =H_*( \Coker(Q_n)\otimes_{\Z[G]} M ) $.
\end{defn}
This chain complex $ (\Coker(Q_*) , \ \partial_{*}^{\Delta }) $ is acyclic and is not always projective, even if $|I|=1$. 
However, the projectivity of $ \mathcal{P}_* $ admits, uniquely up to homotopy, a chain $\Z[G]$-map
\begin{equation}\label{77777}\alpha: ( \mathcal{P}_*, \partial_*) \lra ( C_*^{\Delta}(Y), \partial_*^{\Delta}).
\end{equation}
\begin{exa}\label{tukau5} When $ \mathcal{P}_* $ is the complex $ C^{\rm gr}_* (G, \mathcal{K}) $ in Example \ref{rei1}, we will give an example of $\alpha $.
Consider the normalized complex of $ \Coker(Q_n)$ subject to the submodule
$$\Z \langle (y_0,\dots, y_n) \in Y^n \ | \ y_{i} =y_{i+1} \textrm{ for some } i \ \rangle,$$
and denote it by $ C_*^{\rm Nor}(Y)$. As usual in normalization, we note $H_*^{\Delta}(Y;M) \cong H_*^{\rm Nor}(Y;M) $. Then, for $1 \leq j \leq m$, consider
the correspondence
$$ \alpha_j^{\rm pre }:G^{n+1} \times (K_j)^n \lra C_n^{\rm pre}( K_j\backslash G); \ (g_0,\dots, g_n,k_0,\dots, k_{n-1}) \longmapsto (K_jg_0 , K_j g_1 \dots, K_j g_n). $$
Here let us regard $ \mathcal{P}_*= C^{\rm gr}_*(G,\mathcal{K} ) $ as the cokernel $ \mathrm{Coker} (P_n^{(I)})$; see \eqref{gigi}. Subject to the image of $ C_n^{\rm red}(G, I) $, the direct sum of $ \alpha_j^{\rm pre } $ yields
a chain map $\alpha : C^{\rm gr}_* (G, \mathcal{K}) \ra C_*^{\rm Nor}(Y) $.
\end{exa}

\subsection{A key proposition from malnormality, and some examples.}\label{s3112}
Whereas this $\alpha$ is not always a quasi-isomorphism (see \cite[\S 3.2]{NM} for counter-examples), we give a criterion which is a key in this paper.
\begin{prop}[{A modification of \cite[Proposition 3.23]{NM}}]\label{tukau}
The set $Y=\sqcup_{ i \leq \# L} (K_i \backslash G)$ is assumed to be 
of infinite order. Furthermore, the subgroups $K_1, \dots, K_{\# L} \subset G$ are malnormal.

Then, 
the chain map $\alpha $ induces an isomorphism $H_*^{\Delta}(Y;M ) \cong H_* (G, \mathcal{K} ;M ) $ for any coefficient $M$.
\end{prop}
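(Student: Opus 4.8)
The plan is to read the statement as a comparison between two resolutions of the single module $\Coker(P_I)$. By Proposition \ref{tuk32au} the source complex $C_*^{\rm gr}(G,\mathcal{K})$ is a genuine free resolution of $\Coker(P_I)$, whereas $C_*^{\Delta}(Y)=\Coker(Q_*)$ is an acyclic complex augmenting to the same module. Since $\alpha$ is the lift of the identity on $\Coker(P_I)$, it is automatically a quasi-isomorphism of the untensored complexes, so the whole content lies in showing that $\alpha$ stays a quasi-isomorphism after applying $-\otimes_{\Z[G]}M$. This will follow once the terms of $C_*^{\Delta}(Y)$, equivalently of the normalized model $C_*^{\rm Nor}(Y)$ of Example \ref{tukau5}, are shown to be flat $\Z[G]$-modules in the relevant degrees: then $H_*(C_*^{\Delta}(Y)\otimes_{\Z[G]}M)$ computes $\mathrm{Tor}^{\Z[G]}_*(\Coker(P_I),M)=H_*(C_*^{\rm gr}(G,\mathcal{K})\otimes_{\Z[G]}M)$, and $\alpha\otimes M$ realizes the canonical comparison.

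The flatness is exactly where malnormality enters. Each generator of $C_n^{\rm Nor}(Y)$ is a normalized tuple $(K_{i_0}g_0,\dots,K_{i_n}g_n)\in Y^{n+1}$ with adjacent entries distinct, and its stabilizer under the diagonal right $G$-action is $\bigcap_{s} g_s^{-1}K_{i_s}g_s$. First I would analyze an adjacent pair: setting $h=g_{s+1}g_s^{-1}$, the pairwise intersection is controlled by $K_{i_{s+1}}\cap hK_{i_s}h^{-1}$, which the malnormality condition $(\star)$ forces to be trivial as soon as $h\notin K_{i_s}$ or $h\notin K_{i_{s+1}}$. Because adjacent entries of a normalized tuple are distinct in $Y$, this triviality should propagate, so that in positive degrees every orbit is free and $C_n^{\rm Nor}(Y)$ is a free $\Z[G]$-module. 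Together with acyclicity this upgrades $C_*^{\rm Nor}(Y)$ to a resolution by modules projective in positive degrees, which is the key structural input.

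For the base case I would invoke \cite[Proposition 3.23]{NM}, which treats a single malnormal subgroup with infinite coset space. To pass to the family version I would exploit that both $C_*^{\rm gr}(G,\mathcal{K})$ and $C_*^{\Delta}(Y)$ are presented as cokernels of maps out of the common reduced complex $C_*^{\rm red}(G,I)$ (see \eqref{gigi} and Definition \ref{de3553}), so $\alpha$ fits into a morphism of the corresponding short exact sequences; comparing the long exact sequences obtained after $-\otimes_{\Z[G]}M$, a five-lemma argument of the same shape as the proof of Proposition \ref{tuk32au} should reduce the claim to statements about the individual components. The infinite-order hypothesis on $Y$ enters precisely here, to guarantee the acyclicity and the vanishing of the low-degree obstructions on which \cite[Proposition 3.23]{NM} relies; a finite coset space would contribute torsion that obstructs the comparison.

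The hard part will be the mixed-component tuples together with the degree-$0$ term. Unlike the single-subgroup situation, $Y^{n+1}$ contains tuples drawn from different components $K_i\backslash G$ and $K_{i'}\backslash G$, and $(\star)$ does not by itself kill $K_i\cap K_{i'}$ for $i\neq i'$; I expect to need the geometric input that distinct peripheral subgroups of a link group intersect trivially (parabolics fixing distinct points at infinity) in order to conclude triviality of the stabilizers of such mixed orbits. Moreover the degree-$0$ module $\Z[Y]=\bigoplus_i \Z[K_i\backslash G]$ is induced, hence not flat, so the comparison in the bottom degrees cannot be read off from freeness alone and must be controlled through the augmentation and the infinite-order hypothesis. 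These two points are where the modification of \cite[Proposition 3.23]{NM} genuinely lives, and I expect them to demand the most care.
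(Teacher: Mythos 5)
Your core mechanism is the paper's: malnormality forces the diagonal $G$-action on tuples with distinct entries to be free, so a suitable model of $C_*^{\Delta}(Y)$ becomes projective in positive degrees, and $\alpha$ is then a comparison of resolutions, hence a quasi-isomorphism after $-\otimes_{\Z[G]}M$. The paper, however, runs this on the \emph{subcomplex} $C_*^{\neq }(Y)\subset \Coker(Q_*)$ spanned by tuples whose entries are \emph{pairwise} distinct. That choice is where the infinite-order hypothesis is actually consumed: it is exactly what makes $C_*^{\neq}(Y)$ acyclic (one can always cone off with a fresh element of $Y$), malnormality makes its terms free $\Z[G]$-modules, and $\alpha$ factors through it up to homotopy. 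Your normalized (adjacent-distinct) model also yields the freeness, and your ``propagation'' worry is a non-issue: for $n\geq 1$ a nondegenerate tuple already has $y_0\neq y_1$, and the stabilizer of the whole tuple sits inside $\mathrm{Stab}(y_0)\cap\mathrm{Stab}(y_1)$, so a single distinct pair kills it. But your stated role for the infinite-order hypothesis (``low-degree obstructions'', ``torsion'') does not match where it is needed, and your degree-$0$ concern is moot because both complexes have the same degree-$0$ term $\Coker(P_I)$ with $\alpha_0=\mathrm{id}$. The mixed-component worry is legitimate to raise but needs no geometric input: if $K_i\neq K_j$ as subgroups, choose $g\in K_i\setminus K_j$ (or $g\in K_j\setminus K_i$); then $g^{-1}K_ig=K_i$ and condition $(\star)$ already gives $K_i\cap K_j=\{1_G\}$, and the conjugated intersections follow.

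The one step that would genuinely fail is the proposed five-lemma reduction to \cite[Proposition 3.23]{NM}. The cokernel presentations in \eqref{gigi} and Definition \ref{de3553} give short exact sequences of \emph{chain complexes}, but after applying $-\otimes_{\Z[G]}M$ the sequence $0\to \mathrm{Im}(Q_*)\to C_*^{\rm pre}(Y)\to C_*^{\Delta}(Y)\to 0$ remains exact only if $C_*^{\Delta}(Y)$ is flat --- which is precisely what you are trying to establish --- so comparing the resulting long exact sequences is circular. Drop that reduction: once the positive-degree terms of your chosen model are free and the model is acyclic, the family version follows directly from the standard comparison theorem for resolutions, with no appeal to the single-subgroup case.
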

\begin{proof}
The proof is essentially due to \cite{NM}. Consider the submodule 
\begin{equation}\label{777}
C_n^{\neq }(Y):= \Z \{ \ [(y_0, \dots,y_n)]\in \Coker(Q_n) \ \ | \ \ \mathrm{If \ } s \neq t \mathrm{, \ then \ } y_s \neq y_t . \ \}.
\end{equation}
Since $Y$ is of infinite order, this $C_n^{\neq }(Y) $ is an acyclic subcomplex of $\Coker(Q_n) $,
and the injection is quasi-isomorphic; see \cite[Proposition 3.20]{NM} for the details.
Furthermore, we can easily check that, if $ \sigma \cdot g = \sigma $ with $g \in G$ and $\sigma:= (y_0, \dots, y_n) \in C_n^{\neq }(Y)$,
the malnormal assumption implies $g=1_G \in G$.
That is, the action is free; therefore, $C_n^{\neq }(Y) $ is a free $\Z[G]$-module.
Since the above $\alpha $ factors through $C_n^{\neq }(Y) $, we have the conclusion.
\end{proof}

We end this section by giving three examples satisfying the assumptions.
\begin{exa}[hyperbolic 3-manifolds]\label{tuu}
Let $N$ be a compact hyperbolic 3-manifold with torus boundary $\partial N =T_1 \sqcup \cdots \sqcup T_m$.
Apply $G$ to $\pi_1(N)$ and $K_i $ to $ \pi_1(T_i)$ with a choice of base point.
As is well-known as ``algebraic atoroidality" in hyperbolic geometry (see \cite{AFW}),
the boundary group $ \pi_1(T_i)$ injects $\pi_1(N)$, and
the malnormal condition holds.
Since $N $ is also a $K(G,1)$-space by hyperbolicity, we thus have the isomorphisms
$$H_*^{\Delta}(Y;\Z ) \cong H_*^{\rm gr}( G, \mathcal{K};\Z ) \cong H_*(N, \partial N;\Z ) . $$
\end{exa}

\begin{exa}[Knots]\label{knotmal}
Furthermore, given a non-trivial knot $ L$ in the 3-sphere $S^3$, we replace $G$ by $\pi_1(S^3 \setminus L)$ and $K_1$ by a peripheral subgroup $\pi_1( \partial (S^3 \setminus L))\cong \Z^2 $, which is
generated by a meridian-longitude pair $(\mathfrak{m}, \ \mathfrak{l})$.
By the loop theorem of 3-manifolds, $K_1$ injects $ G$.
Furthermore, 
$S^3 \setminus L$ is basically known to be a $K(G,1)$-space.

Moreover, we mention a theorem to detect the malnormality of the knot group.

\begin{thm}[\cite{Sim,HW}]\label{tuu333}
Let $ K_1\subset G$ be as above. The pair $(K_1,G)$ is malnormal if and only if the knot $L$ is none of the following three cases:
torus knots, cable knots, and composite knots.

In particular, in the case, 
the isomorphism $\alpha : H_*^{\Delta}(Y;\Z ) \cong H_*(E_L, \partial E_L ;\Z ) $ holds.
\end{thm}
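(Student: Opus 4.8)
The plan is to treat the two assertions of Theorem \ref{tuu333} separately: the malnormality dichotomy, which is the cited result of Simon and de la Harpe--Weber, and the homological consequence, which I would derive formally from Proposition \ref{tukau} together with the asphericity of $E_L$.

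For the dichotomy I would argue through the essential-annulus characterization of these three families. If $L$ is a torus knot, then $G=\pi_L$ has an infinite cyclic center $Z$, generated by the regular Seifert fiber and hence sitting inside the peripheral subgroup $K_1$; since a central element is fixed by every conjugation, $Z\subseteq g^{-1}K_1 g\cap K_1$ for all $g$, so $K_1$ is not malnormal. If $L$ is a cable or a composite knot, $E_L$ contains an essential annulus whose two boundary curves lie on $\partial E_L$ (the cabling annulus, respectively the swallow--follow annulus). Such an annulus supplies an element $g\notin K_1$ conjugating one boundary slope to another, so $g^{-1}K_1 g\cap K_1\neq\{1_G\}$, again violating malnormality. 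Conversely, if $L$ is none of the three, then $E_L$ carries no essential annulus and the JSJ piece $X_0$ containing $\partial E_L$ is hyperbolic; as in Example \ref{tuu}, algebraic atoroidality makes $K_1$ malnormal in $\pi_1(X_0)$, and the acylindricity of the JSJ gluings propagates this to malnormality of $K_1$ in all of $G$. I expect this converse --- ruling out that a deep essential torus of a (non-cable, non-composite) satellite manufactures an overlap $g^{-1}K_1 g\cap K_1$ --- to be the genuinely delicate point, and it is exactly here that I would lean on \cite{Sim,HW}.

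Granting the dichotomy, the ``in particular'' clause becomes formal. First I would record that $Y=K_1\backslash G$ is infinite: for a nontrivial knot the peripheral $\Z^2\cong K_1$ has infinite index in $G$, since $G$ is never virtually abelian (it is either a nontrivial central extension of a triangle group, or it contains a nonabelian free subgroup). Thus both hypotheses of Proposition \ref{tukau} hold, and the chain map $\alpha$ induces $H_*^{\Delta}(Y;\Z)\cong H_*(G,\K;\Z)$, which by Proposition \ref{tuk32au} equals $H_*^{\rm gr}(G,K_1;\Z)$.

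Finally I would identify this group topologically. Since $E_L$ is aspherical it is a $K(G,1)$, and $\partial E_L=T^2$ is a $K(K_1,1)$, with the inclusion $\partial E_L\hookrightarrow E_L$ realizing $\iota\colon K(K_1,1)\to K(G,1)$; by Remark \ref{tuk392} the relative group homology $H_*^{\rm gr}(G,K_1;\Z)$ is the homology of the mapping cone of $\iota$, which is precisely $H_*(E_L,\partial E_L;\Z)$. Composing the two isomorphisms yields $\alpha\colon H_*^{\Delta}(Y;\Z)\cong H_*(E_L,\partial E_L;\Z)$, as claimed. I note that we only assert this isomorphism in the malnormal case, so the failure of hyperbolicity for torus, cable, and composite knots causes no difficulty.
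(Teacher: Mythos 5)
Your proposal is correct and follows essentially the same route as the paper: the malnormality dichotomy itself is taken from \cite{Sim,HW} (the paper states it as a cited result inside Example \ref{knotmal} with no independent proof), and the ``in particular'' clause is exactly the combination the paper intends --- $K_1\cong\Z^2$ of infinite index gives $Y$ infinite, Proposition \ref{tukau} then yields $H_*^{\Delta}(Y;\Z)\cong H_*^{\rm gr}(G,K_1;\Z)$, and asphericity of $E_L$ together with the mapping-cone description of Remark \ref{tuk392} identifies the latter with $H_*(E_L,\partial E_L;\Z)$. Your added sketches of why torus, cable, and composite knots fail malnormality (central Seifert fiber in $K_1$, essential annuli) are sound supplementary detail but not something the paper itself supplies.
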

\end{exa}


\begin{exa}[Link quandles]\label{rei3434}
More generally, let us consider a link $ L \subset S^3$ and the link group $G= \pi_L =\pi_1(S^3 \setminus L)$.
Let $ K_\ell $ with $ 1 \leq \ell \leq \pi_0(L) $ be the abelian subgroup generated by a meridian-longitude pair
$(\mathfrak{m}_\ell ,\mathfrak{l}_\ell )$ with respect to the $\ell $-th link component, that is, $K_\ell $ is a peripheral group generated by $(\mathfrak{m}_\ell ,\mathfrak{l}_\ell )$.
We denote $\sqcup_{\ell } K_\ell $ by $\partial \pi_L $ hereafter.

However, there are many links satisfying non-malnormality on $ (\pi_L , \partial \pi_L) $, as in the Hopf link.
More generally, malnormality for non-splittable links is completely characterized in \cite[Corollary 4]{HW}.

Incidentally, the union $\sqcup_{j} ( K_j \backslash \pi_L ) $ with the binary operation (\ref{eq.gh}) is called {\it the link quandle} \cite{Joy}.
We denote the link quandle by $Q_L$, since we later use it in many times.
\end{exa}

\section{Review; quandle homology and Inoue--Kabaya map.}\label{s112}
Next, regarding the middle term in the zigzag sequence \eqref{tukau2},
this section reviews the quandle homology \cite{CJKLS} and Inoue--Kabaya chain map \cite{IK}.
As seen in \cite{CKS,IK,Nos7,Nos3}, quandle theory is useful for reducing some 3-dimensional discussions to diagrammatic objects.

We briefly explain the rack and quandle (co)homology groups \cite{CJKLS,CKS}.
Let $X$ be a quandle, and $C_n^R(X)$ be the free right $\Z$-module generated by $ X^n$. Namely, $ C_n^R(X):= \Z [X^n]. $
Define a boundary $\partial^R_n : C_n^R(X) \rightarrow C_{n-1}^R(X )$ by
$$ \partial^R_n ( x_1, \dots,x_n)= \sum_{1\leq i \leq n} (-1)^i\bigl( ( x_1\tri x_i,\dots,x_{i-1}\tri x_i,x_{i+1},\dots,x_n)
-(x_1, \dots,x_{i-1},x_{i+1},\dots,x_n) \bigr).$$
Since $\partial_{n-1}^R \circ \partial_n^R =0$ as usual, we can define the homology $H^R_n(X)$ and call it {\it the rack homology}.
Furthermore, let $C^D_n (X) $ be the submodule of $C^R_n (X)$ generated by $n$-tuples $(x_1, \dots,x_n)$
with $x_i = x_{i+1}$ for some $ i \in \{1, \dots, n-1\}$.
One can easily see that this $C^D_n (X) $ is a subcomplex of $ C^R_{n} (X). $
Then, the {\it quandle homology}, $H^Q_n (X ) $, is defined to be the homology of the quotient complex $C^R_n (X ) /C^D_n (X)$.
In general, it is not easy to compute these homology groups.

In addition, we will review the Inoue--Kabaya map whose codomain is the Hochschild complex in Definition \ref{de3553}.
For this, we need some notation.
A map $f: X \ra X'$ between quandles is {\it a quandle homomorphism}, if $f(a\tri b)=f(a)\tri f(b)$ for any $a,b \in X$.
Furthermore, given a quandle $X$, we set up the abstract group, $\As (X) $, with presentation
$$\mathrm{As}(X) := \langle \ e_x \ (x \in X )\ | \ e_{x\tri y}^{-1} \cdot e_y^{-1} \cdot e_x \cdot e_y \ (x,y \in X)\ \ \rangle.$$
We call $\mathrm{As}(X)$ {\it the associated group}.
Further, $\mathrm{As}(X)$ has a right action on $X$ defined by $x \cdot e_y:= x \lhd y $, where $x,y \in X .$
Let $ O(X)$ be the orbit set of $X \curvearrowleft \As (X)$.
With respect to $i \in O(X)$, we fix $x_i \in X$ in the orbit.
As in Example \ref{exaqSGH}, denoting $\mathrm{Stab}(x_i) \subset \As (X)$ by $K_i$,
we can consider the setting
$$X=Y= \sqcup_{ i \in O(X)} (K_i \backslash G) \ \ \mathrm{with} \ \ G= \As (X) .$$
Furthermore, we set up the following set consisting of some maps:
$$ I_n := \bigl\{ \ \iota : \{2,3, \dots, n\} \lra \{ 0,1\} \ \bigr\},$$
which is of order $2^{n-1} $.
Moreover, given a tuple $(x_1 ,\dots, x_n) \in X^n $ and each $\iota \in I_n$, we define
$x(\iota, i) \in X$ by the formula
$$ x(\iota, i):= x_i \cdot (e_{x_{i+1}}^{\iota(i+1)} \cdots e_{x_{n}}^{\iota(n)}).$$

Then, with a choice of an element $p \in X$, we define a homomorphism
$$ \varphi_n: C_n^R(X ) \lra C_n^{\rm pre}(X) \otimes_{\Z [\As(X)] } \Z $$
by setting
$$ \varphi_n(x_1,\dots,x_n) := \sum_{\iota \in I_n} (-1)^{ \iota(2)+\iota(3)+\cdots +\iota (n)}\bigl( x(\iota,1),\dots, x(\iota, n) ,p \bigr). $$
Here are the descriptions of $\varphi_*$ of lower degree:
\begin{eqnarray*}
\varphi_2(a,b)&=&(a,b ,p)-(a\lhd b ,b ,p), \\
\varphi_3(a,b,c)&=&(a,b,c ,p)-(a\lhd b ,b,c ,p)-(a\lhd c ,b\lhd c,c ,p)+ ((a\lhd b) \lhd c ,b\lhd c,c ,p) .
\end{eqnarray*}
Then, it is shown \cite[\S 4]{IK} that this $\varphi_n$ is a chain map, i.e., $ \partial_n^{\Delta}\circ \varphi_n =\varphi_{n-1} \circ \partial_{n}^{R}$,
and that 
if $n \leq 3$, the image of the subcomplex $ C^D_n (X) $ is nullhomotopic.
Hence, the map $ \varphi_3$ with $n=3 $ induces a homomorphism $$ (\varphi_3)_*:H_3^Q(X ) \lra H_3^{\Delta}(X;\Z) .$$
We refer the reader to several studies on the chain map; see \cite{IK,Kab,Nos3,Nos2,Nos7}.

Next, we review the quandle cocycle invariant.
Given a shadow coloring $\sh$ of a link diagram $D$,
{\it the fundamental 3-class of $\sh$}, denoted by [$\sh$], is defined to be the sum
$$[\sh] : =\sum_{\tau}\epsilon_{\tau} \bigl(\lambda (x_{\tau }) ,\ \mathcal{C} (y_{\tau }),\ \mathcal{C} (z_{\tau }) \bigr) \in C_3^Q( X) $$
running over all the crossings $\tau $, where
the triple $(x_{\tau },\ y_{\tau },\ z_{\tau })$ are the three assignments around $\tau$ illustrated in Figure \ref{fig.color43}, and
$\epsilon_{\tau} \in \{ \pm 1\}$ is the sign of $\tau$.
Then, we can easily see that $[\sh]$ is a quandle 3-cycle in $C_3^Q( X) $; see \cite{CKS}.
If we have a quandle 3-cocycle $\phi: X^3 \ra A$,
the pairing
$\langle \phi, [\sh]\rangle \in A $ is called {\it the quandle cocycle invariant of $\sh$}.
Here, a map $\phi: X^3 \ra A$ is a quandle 3-cocycle, if the followings hold by definition:
$$ \phi(x,z,w)- \phi(x \lhd y ,z,w)- \phi(x,y,w)+ \phi(x \lhd z ,y \lhd z,w)= \phi(x \lhd w, y \lhd w,z \lhd w) - \phi(x,y,z),$$
$$ \phi(x,x,y)= \phi( y ,z ,z)=1 , \ \ \ \ \ \mathrm{for \ any \ \ } x,y,z,w \in X. $$
For calculating the invariant $\langle \phi, [\sh]\rangle $, it is important to find explicit formulas of quandle 3-cocycles $\phi $, although it is difficult in general.

\begin{exa}\label{r11}
Let $X$ be the link quandle $Q_L$ of a link; see Example \ref{rei3434}.
Consider the identity $\pi_L \ra \pi_L$, which induces $\mathrm{id}_{Q_L}: Q_L \ra Q_L $ . Then, we obtain from Example \ref{exaqSGH} the $Q_L$-coloring $\mathcal{S}_{\mathrm{id}_{Q_L}}$,
together with the associated 3-class $[\mathcal{S}_{\mathrm{id}_{Q_L}} ]$. This homology 3-class plays a key role later.
\end{exa}
\begin{exa}\label{rei322}
In the hyperbolic case, Inoue and Kabaya obtained a 3-cocycle from the chain map $\varphi_*$, with a relation to the Chern-Simons invariant.
Let $X$ be the quotient set $\mathbb{C}^2 \setminus \{ (0,0)\}/ \sim $
subject to the relation $ (a,b)\sim (-a,-b)$.
Equip $X$ with a quandle operation
\[\left(
\begin{array}{cc}
a & b
\end{array}
\right)
\lhd \left(
\begin{array}{cc}
c & d
\end{array}
\right)= \left(
\begin{array}{cc}
a & b
\end{array}
\right) \left(
\begin{array}{cc}
1+ cd & d^2 \\
-c^2 & 1-cd
\end{array}
\right). \]
One can easily verify that $ X$ is isomorphic to
the triple $(G, K,z_0) $ as in Example \ref{exaqSGH},
where $G$ is $PSL_2 (\mathbb{C})$
and $K$ is the unipotent subgroup of the form
${\small \Bigl\{ \left(
\begin{array}{cc}
1 & a \\
0 & 1
\end{array}
\right) \Bigr|\ a \in \mathbb{C} \Bigr\}} $, and $z_0={\small \left(
\begin{array}{cc}
1 & 1 \\
0 & 1
\end{array}
\right)} $.

Although this $(G,K )$ is not malnormal,
the paper \cite[\S 4]{NM} showed that 
the chain map $\alpha $ in \eqref{77777} is a quasi-isomorphism, which ensures a quasi-inverse $\beta$.
Furthermore, Neumann \cite{Neu} and Zickert \cite{Zic} described
the Chern-Simons 3-class as a relative group 3-cocycle
\begin{equation}\label{ki2n} \mathrm{CS} \in C^3_{\rm gr } ( PSL_2( \mathbb{C}), K ;\ \mathbb{C}/\pi^2 \Z),
\end{equation}
together with a cocycle presentation (see \cite{Zic} for the detail).
As a consequence, 
we concretely get a quandle 3-cocycle $ \varphi^* \circ \beta^*( \mathrm{CS})$.

Further, we consider a hyperbolic link $L$ and explain \eqref{kaa} below.
From the viewpoint of Example \ref{exaqSGH}, the associated holonomy representation $\rho : \pi_L \ra PSL_2(\C)$ is regarded as a shadow $X$-coloring $ \sh_{\rho}$.
Then, Inoue and Kabaya \cite[Theorem 7.3]{IK} showed the equality
\begin{equation}\label{kaa} \langle \rho^* ( \mathrm{CS}), \ [E_L, \ \partial E_L ] \rangle = \langle ( \beta \circ \varphi_3 )^* ( \mathrm{CS}) ,\ [\sh_{\rho}]\rangle \in \C /\pi^2 \Z . \end{equation}
Notice that, the right hand side is a quandle cocycle invariant, by definition.
As a result, we can compute the Chern-Simons invariant without triangulation; see \cite{IK} for examples.
\end{exa}
\section{Algebraic representation of the fundamental homology 3-class. }\label{23433}
In this section, we give a method to algebraically represent the fundamental 3-class $[E_L, \partial E_L]$,
where $L$ is either a hyperbolic link or a prime non-cable knot.

To describe this, the following plays a key viewpoint (see \S \ref{s33} for the proof).
\begin{thm}\label{clA1ee22}
Assume that a prime knot $L$ is neither a cable knot nor a torus knot, as in Theorem \ref{tuu333}.
Then, the Inoue--Kabaya chain map $\varphi_3 $ induces an isomorphism $ H_3^Q(Q_L) \ra H_3^{\Delta }(Q_L;\Z) $ as $\Z.$
\end{thm}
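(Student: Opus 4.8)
The plan is to identify both homology groups with $\Z$ and to verify that $(\varphi_3)_*$ is surjective, from which the isomorphism follows automatically, since a surjective homomorphism $\Z\to\Z$ is bijective. First I would fix the algebraic model. For a prime knot $L$ that is neither a torus knot nor a cable knot (hence, being prime, not composite either), Theorem \ref{tuu333} makes the peripheral pair $(K_1,\pi_L)$ malnormal; moreover $Q_L=K_1\backslash\pi_L$ with associated group $\As(Q_L)=\pi_L$ and stabilizer $K_1$ the peripheral $\Z^2$, since under these hypotheses the centralizer of a meridian is exactly the peripheral subgroup. Proposition \ref{tukau} then applies and makes $\alpha$ of \eqref{77777} a quasi-isomorphism, so that $H_3^{\Delta}(Q_L;\Z)\cong H_3^{\rm gr}(\pi_L,K_1;\Z)$. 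By Remark \ref{tuk392} together with the asphericity of $E_L$ recorded in Example \ref{knotmal}, this last group is $H_3(E_L,\partial E_L;\Z)\cong\Z$, generated by the fundamental class. Note also that $(\varphi_3)_*$ is already defined on $H_3^Q$, as the review section establishes that $\varphi$ kills the degenerate subcomplex in degrees $\leq 3$.

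Next I would trace the fundamental quandle $3$-class through the maps. Starting from the identity $Q_L$-coloring of Example \ref{r11}, I would show that $\alpha$ identifies $(\varphi_3)_*[\mathcal{S}_{\id_{Q_L}}]$ with the fundamental class $[E_L,\partial E_L]$, a generator of the target $\Z$. This is precisely the chain-level compatibility underlying the Inoue--Kabaya equality \eqref{kaa}, now read off for the tautological representation $\id\colon\pi_L\to\As(Q_L)$ rather than for a single Chern--Simons cocycle. Consequently $(\varphi_3)_*$ is surjective.

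It then remains to prove $H_3^Q(Q_L)\cong\Z$. Granting this, a surjection $\Z\twoheadrightarrow\Z$ is an isomorphism, and the theorem follows. I would obtain this computation of the third quandle homology of the link quandle through the explicit analysis in Appendix \ref{2343136}, where the malnormal stabilizer structure of $Q_L=K_1\backslash\pi_L$ is exploited to split off the degenerate part and to pin the quandle homology down to a single copy of $\Z$ generated by $[\mathcal{S}_{\id_{Q_L}}]$.

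The main obstacle I expect is exactly this last determination of $H_3^Q(Q_L)$: quandle homology is in general intractable, so controlling the source — equivalently, ruling out any kernel of $(\varphi_3)_*$ — is where the genuine work lies. By contrast, the identification of the target with $\Z$ and the surjectivity statement are comparatively formal consequences of malnormality, asphericity, and the naturality of the Inoue--Kabaya construction.
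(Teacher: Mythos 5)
Your overall strategy coincides with the paper's: identify both groups with $\Z$ (the target via malnormality and Proposition \ref{tukau}, the source via the appendix computation $H_3^Q(Q_L)\cong\Z$ generated by $[\mathcal{S}_{\mathrm{id}_{Q_L}}]$, Theorem \ref{3331}), and then show the integer $N_L$ with $\beta\circ\varphi_3([\mathcal{S}_{\mathrm{id}_{Q_L}}])=N_L[\pi_L,\partial\pi_L]$ is $\pm1$. You have, however, misplaced the difficulty. The step you call ``comparatively formal'' --- the surjectivity, i.e.\ $N_L=\pm1$ --- is exactly where the paper's proof of Theorem \ref{mainthm1} does its real work, and your justification for it has a genuine gap.

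Concretely: the equality \eqref{kaa} is not a chain-level naturality statement that can be ``read off for the tautological representation''; it is Inoue--Kabaya's Theorem 7.3, which is proved only for \emph{hyperbolic} links and for the specific Chern--Simons cocycle, using ideal triangulations. The paper extracts $N_L=\pm1$ from it by pairing against $\mathrm{CS}$ and using that the complex volume is nontrivial, so the argument is geometric, not formal. More seriously, a prime knot that is neither a torus knot nor a cable knot need not be hyperbolic --- it can be a satellite --- and for such knots \eqref{kaa} does not apply at all. The paper handles this case by the JSJ decomposition: it introduces the auxiliary quandle $Q_W=(K_1\backslash\pi_L)\sqcup(K_2\backslash\pi_L)$, the diagram \eqref{aa22}, and Lemma \ref{4321} (computing $H_3^Q(Q_W)\cong\Z^2$ and the matrices of $i_*^Q$ and $j_*^Q$), in order to transport the known $N_{L'}=\pm1$ of the hyperbolic companion piece to $L$ itself. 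Your proposal contains no mechanism for this case, so as written it proves the theorem only for hyperbolic knots. By contrast, your assessment of the other two ingredients (the identification of $H_3^{\Delta}(Q_L;\Z)$ with $H_3(E_L,\partial E_L;\Z)\cong\Z$, and the need for the appendix computation of $H_3^Q(Q_L)$) is accurate and matches the paper.
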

Theorem \ref{3331} implies that $ H_3^Q(Q_L)$ is generated by some fundamental 3-class [$\mathcal{S}_{\mathrm{id}_{Q_L}}$].
Hence, if we can explicitly formulate a quasi-inverse $\beta: C_*^{\Delta } (Q_L )_{ \pi_L } \ra
C_*^{\rm gr } (\pi_L ,\partial \pi_L ; \Z ) $ in \eqref{aa},
then we obtain an algebraic presentation of the fundamental 3-class $[E_L, \partial E_L]$.

We will explain the reason why we focus on only hyperbolic links in \S \ref{238}.
The key is the JSJ-decomposition of knots (or the geometrization theorem).
Precisely, as seen in \cite[Theorem 4.18]{Bud} or \cite{AFW,HW}, there exist open sets $ V_1 \subset V_2 \subset \cdots \subset V_n \subset S^3$ satisfying the followings:
\begin{enumerate}
\item The set $V_i$ for any $i$ is an open solid torus in $S^3$, and $V_i$ contains the knot $L$.
\item For any $ i\in \Z_{\geq 0}$, the difference $ V_{i} - \overline{V}_{i-1} $ is homeomorphic to either a composite knot or a hyperbolic knot or an $(n_i,m_i)$-torus knot in the solid torus for some $(n_i,m_i) \in \mathbb{Z}^2$.
Here we denote the knot $L$ by $V_0$.
\end{enumerate}
As is known, the decomposition is unique in some sense.
Here, remark (see \cite[Corollary 4.19]{Bud}) that $ L$ is a cable knot if and only if a difference $ V_{1} -\overline{V_0}$ is an $(n,m)$-torus knot in the solid torus; see Figure \ref{fig3lo3}.

Following the JSJ-decomposition, let us further examine the pairing \eqref{ssa}.
Denote the inclusion
$ V_i - V_{i-1} \subset S^3 -L $ by $\iota_i$, and the torus-boundary $\overline{ V_{i-1}} \cap \overline{ V_{i}}$ by $B_i$. 
Then, given $f : \pi_L \ra G$ and $\theta $ as before, we have $ f \circ (\iota_i)_*: \pi_1(V_i - V_{i-1} ) \ra G$.
Let $K_i \subset G$ be the image of $ \pi_1(B_i)$ via $ f \circ (\iota_i)_*$, where we appropriately choose a base point.
Then, we can regard the pullback $\iota_i^* \circ f^* (\theta)$ as a relative group 3-cocycle of $ (G,K_i,K_{i+1})$.
Therefore, the excision axiom on $\iota_i $'s ensures the equality
\begin{equation}
\label{13882} \langle f^* (\theta) , \ [E_L, \partial E_L] \ \rangle = \sum_{i \ : 1 \leq i \leq n } \langle \iota_i^* \circ f^* (\theta) , \ [ \overline{V_{i}} - V_{i-1} , \partial ( \overline{V_{i}} - V_{i-1} )] \ \rangle .
\end{equation}
To conclude, 
it is reasonable to deal with the fundamental 3-classes piecewise, according to the JSJ-decompositions of knots.

\subsection{The fundamental relative 3-class of hyperbolic links. }\label{238}
This subsection gives an explicit algorithm for describing the fundamental relative 3-class of hyperbolic links.
Here, the description is done in truncated terms (Theorem \ref{class}).

We begin by reviewing the truncated complex, which is defined by Zickert \cite[\S 3]{Zic}.
Fix a group $G$, and subgroups $K_1, \dots, K_m$.
For $n \geq 1$, consider the free abelian group $\Z[G^{n^2+ n }]$, and denote the $(ij)$-th generator $g \in G$ by $g_{ij}$ with $ i\neq j$.
Define $\overline{C}_n(G, \mathcal{K})$ by the submodule of $\Z[G^{n^2+ n }]$
which is generated by $g_{ij}$
satisfying 
\begin{itemize}
\item for any $i \in \{ 0,\dots, n\}$, there exists $m_{i} \in \{ 1,\dots, m\} $ such that the $n$ elements $g_{i0}, \dots, \check{g_{ii}}, \dots,g_{in}$ subject to $K_{m_i}$ are
equal in the coset $ K_{m_i} \backslash G. $
\end{itemize}
Then, right multiplication endows $\overline{C}_n(G, \mathcal{K})$ with a $\Z[G]$-module structure, and the usual simplicial boundary map gives rise to a boundary map $\partial_*$ on $\overline{C}_n(G, \mathcal{K})$.
The complex ($\overline{C}_*(G, \mathcal{K}), \partial_*$) is called {\it the truncated complex} of $(G,\mathcal{K})$.
As was similarly shown \cite[Remark 3.2 and Proposition 3.7]{Zic}, we can easily verify that this complex is a free resolution of
$\mathrm{Coker}(P_I)$. 

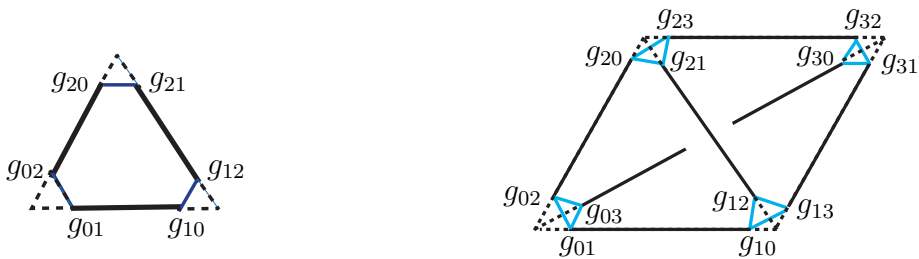
\begin{figure}[htpb]
\begin{center}
\begin{picture}(100,100)
\put(60,43){\pc{tantai9}{0.41}}
\put(-130,43){\pc{simplicial1}{0.65}}
\put(-105,10){\large $g_{01} $}
\put(-128,32){\large $g_{02} $}
\put(-67,10){\large $g_{10} $}
\put(-52,32){\large $g_{12} $}
\put(-74,65){\large $g_{21} $}
\put(-111,65){\large $g_{20} $}

\put(81,3){\large $g_{01} $}
\put(60,22){\large $g_{02} $}
\put(91,15){\large $g_{03} $}
\put(149,3){\large $g_{10} $}
\put(139,20){\large $g_{12} $}
\put(171,17){\large $g_{13} $}
\put(118,89){\large $g_{23} $}
\put(123,73){\large $g_{21} $}
\put(91,75){\large $g_{20} $}
\put(189,89){\large $g_{32} $}
\put(203,72){\large $g_{31} $}
\put(171,75){\large $g_{30} $}
\end{picture}
\end{center}
\vskip -1.4pc
\caption{A geometric description of generators of the truncated 2-, 3-simplexes with $G$-labels. 
\label{fig.colo3}}
\end{figure}

In addition, let us examine the case where $G_{\mathbb{C}}$ is $PSL_2(\mathbb{C})$ and every $K_{\mathbb{C},\ell}$ is conjugate to the unipotent subgroup
such that $K_{\mathbb{C},s} \cap K_{\mathbb{C},t } = \{1_{G_{\mathbb{C}}} \}$ for $s \neq t. $
Then, we have the quandle $X'$, from Example \ref{rei322}, as the union of the quandles $ \sqcup^{\#L }_{\ell = 1} K_{\mathbb{C},\ell} \backslash G_{\mathbb{C}}= \sqcup^{\#L }(\mathbb{C}^2 \setminus \{ 0,0\})/ \sim $.
We will describe a quasi-inverse $\beta$ mentioned in Proposition \ref{tukau}.
For this, consider the following subcomplex of $ C_{n}^{\Delta}(X_{\mathbb{C}};\Z) $:
$$ C_{n}^{h \neq}(X_{\mathbb{C}}) = \Z \langle \ [ (a_0 , b_0), \dots, (a_n,b_n) ]\in C_{n}^{\Delta}(X_{\mathbb{C}};\Z ) \ \bigr| \ a_i b_j \neq a_j b_i \mathrm{ \ for \ any \ } i, \ j\mathrm{ \ with \ } i \neq j \ \bigr\rangle. $$
Then, this complex is known to be an acyclic $\Z[G_{\mathbb{C}}]$-free complex.
Consider the correspondence
\[ g_{ij} :(X_{\mathbb{C}})^{n+1} \lra G; \ \ \ \bigl( (a_0 , b_0), (a_1, b_1),\dots, (a_n,b_n)\bigr) \longmapsto \left(
\begin{array}{cc}
a_i & b_i \\
a_j/ (a_i b_j - a_j b_i )& b_j/ (a_i b_j - a_j b_i)
\end{array}
\right). \]
This gives rise to a homomorphism
\[ \beta: C_{n}^{h \neq}(X_{\mathbb{C}}) \lra \overline{C}_n(G_{\mathbb{C}}, \mathcal{K}_{\mathbb{C}}).\]
Then, Zickert \cite[\S 3]{Zic} (see also \cite[Corollary 9.6]{NM}) showed that this $\beta$
is a chain map and a $\Z[G]$-homomorphism.
To summarize, this $\beta$ gives a quasi-inverse of the chain map $ \alpha : C_n^{\rm gr}(G_{\mathbb{C}},\mathcal{K} ) \ra C_{n}(X_{\mathbb{C}}) .$

We return to the discussion of a hyperbolic link $L$, and state Theorem \ref{class} below.
Fix a diagram $D$ of $L.$
Then, we have the holonomy representation $ \rho: \pi_L \ra PSL_2( \mathbb{C})$.
As is well-known, $\rho$ is injective. Thus, it is more reasonable to use matrices in $PSL_2( \mathbb{C})$,
than to use (Wirtinger) group presentations of $\pi_L$.
Here, we should mention the following lemma obtained from hyperbolicity.
\begin{lem}[{see \cite[\S 5]{Zic} or \cite[Lemma 7.2]{IK}}]\label{h3131i}
Let $ \sigma \in C_{3}^{\Delta }(X_{\mathbb{C}} ;\Z ) $ be a 3-cycle which represents the fundamental 3-class $\rho_*(E_L, \partial E_L)$ of a hyperbolic link.
Then, this 3-cycle $\sigma $ lies in the subcomplex $ C_{3}^{h \neq}(X_{\mathbb{C}} )$.
\end{lem}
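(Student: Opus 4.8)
The plan is to produce the representing cycle $\sigma$ directly from the complete hyperbolic structure and to read off the required non-degeneracy condition geometrically. First I would recall that the projectivization $(a,b)\mapsto [a:b]$ exhibits $X_{\mathbb{C}}=(\C^2\setminus\{0\})/{\pm}$ as a decorated version of the ideal boundary $\partial \mathbb{H}^3=\mathbb{CP}^1$: the vector records both an ideal point and a horosphere centered at it. Under this identification the defining inequality $a_ib_j\neq a_jb_i$ of $C_3^{h\neq}(X_{\mathbb{C}})$ is exactly the statement that $\det\left(\begin{smallmatrix} a_i & b_i\\ a_j & b_j\end{smallmatrix}\right)\neq 0$, i.e. that the $i$-th and $j$-th vertices project to \emph{distinct} points of $\mathbb{CP}^1$. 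Thus the lemma reduces to showing that $\sigma$ can be built from ideal $3$-simplices all of whose four vertices are distinct ideal points.

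Next I would construct $\sigma$ as in \cite{Zic,IK}. Since $L$ is hyperbolic, $E_L$ carries a complete finite-volume hyperbolic metric with holonomy $\rho$, and the Epstein--Penner convex-hull construction yields a canonical, $\rho$-equivariant decomposition of the universal cover into ideal polytopes, which I would subdivide into ideal tetrahedra by coning. Developing this decomposition into $\mathbb{H}^3$ assigns to each ideal vertex a point of $\mathbb{CP}^1$, and a $\rho$-equivariant choice of horospheres at the cusps lifts each such point to an element of $X_{\mathbb{C}}$. The alternating sum $\sigma=\sum_{T}\varepsilon_T\,[(a_0,b_0),\dots,(a_3,b_3)]$ over the tetrahedra $T$, with vertices decorated as above, is a relative $\Z[G_{\mathbb{C}}]$-cycle in $C_3^{\Delta}(X_{\mathbb{C}};\Z)$ representing $\rho_*[E_L,\partial E_L]$; this is the content of the cited results.

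It then remains to verify distinctness of the vertices. Here hyperbolicity is essential: for the discrete faithful $\rho$, distinct cusps of $E_L$ and distinct vertices of a polytope in the Epstein--Penner decomposition develop to distinct points of $\partial \mathbb{H}^3$, because the convex hull is a genuine ideal polytope in which no two vertices coincide. Hence each coned tetrahedron $T$ has four pairwise distinct ideal vertices, so $a_i b_j\neq a_j b_i$ for all $i\neq j$, and every generator appearing in $\sigma$ lies in $C_3^{h\neq}(X_{\mathbb{C}})$; therefore $\sigma\in C_3^{h\neq}(X_{\mathbb{C}})$, as claimed.

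The hard part will be the vertex-distinctness claim, which is the only step that uses more than the mere existence of a representing cycle: for an arbitrary (non-discrete, or merely topological) choice, two vertices of a developed simplex could collapse to a single ideal point, producing a degenerate simplex outside $C_3^{h\neq}$. I would therefore insist on a geometric decomposition (Epstein--Penner, or any positively-oriented geometric ideal triangulation) rather than an arbitrary one, and I would note that zero-volume (flat) tetrahedra are harmless, since the condition $a_ib_j\neq a_jb_i$ demands only distinct vertices and not positive volume. A minor technical point to confirm is the $\rho$-equivariance and consistency of the horosphere decoration across the cusps, but this is standard and does not affect distinctness.
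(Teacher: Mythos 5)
The paper offers no proof of this lemma at all --- it is imported from \cite[\S 5]{Zic} and \cite[Lemma 7.2]{IK} --- so your argument has to be judged against those sources and against the role the lemma plays in Theorem \ref{class}. The part of your argument that you do carry out is sound: the condition $a_ib_j\neq a_jb_i$ is exactly non-proportionality of the two vectors, i.e.\ distinctness of their images in $\mathbb{CP}^1=\partial\mathbb{H}^3$, and a cycle assembled from a geometric ideal decomposition of $E_L$ (Epstein--Penner, suitably subdivided) with an equivariant horospherical decoration has pairwise distinct ideal vertices on each simplex. This correctly establishes the Zickert-style statement that the class $\rho_*[E_L,\partial E_L]$ \emph{admits} a representative lying in $C_3^{h\neq}(X_{\mathbb{C}})$.

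But that is weaker than what the paper actually uses, and your own closing caveat identifies why. Read literally, the lemma quantifies over an arbitrary representing cycle $\sigma$, which is false (add to any representative the boundary of a degenerate $4$-chain); the reading that makes Theorem \ref{class} work is that the \emph{specific} chain $\varphi_3([\mathcal{S}_\rho])$ coming from the shadow coloring lies in $C_3^{h\neq}(X_{\mathbb{C}})$, because $\mathrm{res}(\beta)$ is defined only on that subcomplex and Theorem \ref{class} applies it at the chain level to exactly that chain. Membership in $C_3^{h\neq}$ is a property of a chain, not of its homology class, so exhibiting a different well-behaved representative does not license the formula $\mathrm{res}(\beta)\circ\varphi_3([\mathcal{S}_\rho])$. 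The missing step --- which is the actual content of \cite[Lemma 7.2]{IK} --- is an analysis of the simplices $(x(\iota,1),\dots,x(\iota,3),p)$ produced by the Inoue--Kabaya map: their vertices project to fixed points of conjugates $\rho(g)^{-1}\rho(\mathfrak{m}_\ell)\rho(g)$ of the parabolic meridian images, and one invokes discreteness and faithfulness of the holonomy to show that two such parabolics share a fixed point only when they lie in a common peripheral conjugate, which rules out coincidences among the vertices actually occurring in $\varphi_3([\mathcal{S}_\rho])$. Your proof does not address this cycle at all, so it does not close the step the paper needs.
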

In summary, we obtain the conclusion:
\begin{thm}\label{class}
Let $L$ be a hyperbolic link with the holonomy representation $ \rho: \pi_L \ra PSL_2( \mathbb{C})$.
Let $G $ be the image $\rho(\pi_L ) $, and $\mathcal{K} $ be the subgroups $ \rho (\partial \pi_L)$.
Choose a diagram $D$, and take the quandle 3-class $[\mathcal{S}_\rho]$; see \S \ref{s112}.
Then, the following 3-cycle represents the fundamental 3-class in $H_3( E_L , \partial E_L ;\Z) \cong H_3^{\rm gr}( \pi_L, \partial \pi_L;\Z) \cong \Z.$
$$ \mathrm{res}(\beta) \circ \varphi_3 ([\mathcal{S}_\rho])\in \overline{C}_3(G, \mathcal{K};\Z). $$
\end{thm}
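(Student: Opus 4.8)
The plan is to run the quandle cycle $[\mathcal{S}_\rho]$ through the two arrows of the zigzag \eqref{tukau2}, specialised to the quandle $X_{\mathbb{C}}$ and to $G_{\mathbb{C}}=PSL_2(\mathbb{C})$, and then to identify the output with the fundamental class by means of the explicit quasi-inverse $\beta$ of Example \ref{rei322}. Since $\rho$ is realised as a shadow $X_{\mathbb{C}}$-coloring and $[\mathcal{S}_\rho]$ is a quandle $3$-cycle, the chain-map property of $\varphi_3$ recalled in \S\ref{s112} shows at once that $\varphi_3([\mathcal{S}_\rho])$ is a $3$-cycle in $C_3^{\Delta}(X_{\mathbb{C}};\mathbb{Z})$; applying the chain map $\mathrm{res}(\beta)$ then yields a $3$-cycle in $\overline{C}_3(G,\mathcal{K};\mathbb{Z})$, so the object in the statement is at least a well-defined cycle. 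What remains is to pin down its homology class.

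First I would establish, at the level of homology in $H_3^{\Delta}(X_{\mathbb{C}};\mathbb{Z})$, the identity $[\varphi_3([\mathcal{S}_\rho])]=\alpha_*\rho_*[E_L,\partial E_L]$; that is, that the Inoue--Kabaya map carries the quandle fundamental class attached to the holonomy to the push-forward of the topological fundamental class. This is the geometric core, and it is exactly the correspondence underlying \eqref{kaa} of \cite{IK}, now read as an equality of homology classes rather than merely of Chern--Simons pairings. Granting this, Lemma \ref{h3131i} guarantees that the representing cycle $\varphi_3([\mathcal{S}_\rho])$ actually lies in the non-degenerate subcomplex $C_3^{h\neq}(X_{\mathbb{C}})$ on which $\beta$ is defined, so that $\mathrm{res}(\beta)$ may legitimately be applied to it.

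Next I would invoke that $\beta:C_*^{h\neq}(X_{\mathbb{C}})\to\overline{C}_*(G_{\mathbb{C}},\mathcal{K}_{\mathbb{C}})$ is a $\mathbb{Z}[G]$-chain map and a quasi-inverse of $\alpha$ (Zickert \cite[\S 3]{Zic}, cf.\ Proposition \ref{tukau}): on homology $\beta_*$ inverts the isomorphism $\alpha_*\colon H_3^{\mathrm{gr}}(G,\mathcal{K})\xrightarrow{\sim}H_3^{\Delta}(X_{\mathbb{C}})$, since both $\overline{C}_*$ and $C_*^{\mathrm{gr}}$ are free resolutions of $\mathrm{Coker}(P_I)$. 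Consequently $[\mathrm{res}(\beta)\circ\varphi_3([\mathcal{S}_\rho])]=\beta_*\alpha_*\rho_*[E_L,\partial E_L]=\rho_*[E_L,\partial E_L]$. Because $\rho$ is injective by hyperbolicity and $E_L$ is a $K(\pi_L,1)$ (Example \ref{tuu}, Remark \ref{tuk392}), the map $\rho_*$ is an isomorphism $H_3(E_L,\partial E_L)\cong H_3^{\mathrm{gr}}(\pi_L,\partial\pi_L)\xrightarrow{\sim}H_3^{\mathrm{gr}}(G,\mathcal{K})$, so this class is precisely the fundamental $3$-class, which finishes the argument.

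I expect the main obstacle to be the homological identification of the second paragraph: proving that $\varphi_3$ sends $[\mathcal{S}_\rho]$ to the genuine integral, correctly oriented fundamental class, and not merely to a class that the Chern--Simons cocycle happens to detect identically. This amounts to matching the combinatorial developing data encoded in the shadow coloring with an honest (truncated) ideal triangulation of $E_L$, and it is here that hyperbolicity is indispensable, through the injectivity of $\rho$ together with the non-degeneracy supplied by Lemma \ref{h3131i}; by contrast, the remaining steps are formal consequences of the chain-level properties of $\alpha$, $\beta$, and $\varphi_3$.
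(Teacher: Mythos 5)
Your skeleton coincides with the paper's: the theorem is obtained there by assembling exactly the ingredients you list (the chain-map property of $\varphi_3$, Lemma \ref{h3131i} to place the cycle in $C_3^{h\neq}(X_{\mathbb{C}})$ where $\mathrm{res}(\beta)$ is defined, and the fact that $\beta$ is a $\Z[G]$-chain map between free resolutions of $\mathrm{Coker}(P_I)$, hence inverts $\alpha_*$ on homology). The one place where you and the paper diverge is the step you yourself flag as the main obstacle, namely the identity $[\varphi_3([\mathcal{S}_\rho])]=\alpha_*\rho_*[E_L,\partial E_L]$ in $H_3^{\Delta}(X_{\mathbb{C}};\Z)$. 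You propose to prove it by matching the developing data of the shadow coloring with a truncated ideal triangulation of $E_L$ — essentially re-deriving the main construction of \cite{IK} — but you do not carry this out, so as written your argument rests on a granted hypothesis. The paper closes this step by a cheaper, indirect detection argument already contained in the proof of Theorem \ref{mainthm1}: since $H_3^Q(Q_L)$ is infinite cyclic and generated by $[\mathcal{S}_{\mathrm{id}_{Q_L}}]$ (Theorem \ref{3331}), and since $\varphi_3$ and $\alpha$ induce isomorphisms onto $H_3^{\Delta}(Q_L;\Z)\cong H_3^{\rm gr}(\pi_L,\partial\pi_L;\Z)\cong\Z$ (Theorem \ref{clA1ee22}, Example \ref{tuu}), one automatically has $\beta\circ\varphi_3([\mathcal{S}_{\mathrm{id}_{Q_L}}])=N_L[\pi_L,\partial\pi_L]$ for some integer $N_L$, and the pairing equality \eqref{kaa} together with the nonvanishing of the hyperbolic volume forces $N_L=\pm1$; functoriality (diagram \eqref{aa}) then transports this to $[\mathcal{S}_\rho]=\rho_*[\mathcal{S}_{\mathrm{id}_{Q_L}}]$. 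So the Chern--Simons cocycle is not, as you worry, merely a class that ``happens'' to be detected: it is used precisely as a detector of the integer multiplier, and this spares one from constructing any triangulation. Note also that this argument pins the class down only up to sign, i.e., it shows the cycle is a generator of $H_3\cong\Z$, which is what the theorem asserts. If you replace your granted second paragraph by this $N_L=\pm1$ argument (or by an actual execution of the triangulation matching), your proof is complete.
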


\subsection{Example; the figure eight knot. }\label{238}

As the simplest case, we let $L$ be the figure eight knot as in Figure \ref{ftf}.
By the Wirtinger presentation, we have
$$\pi_1(S^3 \setminus L) \cong \langle g,h \ | \ h^{-1} gh = g^{-1} h^{-1}ghg^{-1} hg \rangle , $$
where $ g$ and $h$ are meridians derived from the arcs $\alpha_1$ and $\alpha_2$, respectively.
We denote the two classes in $Q_L$ of $g$ and $h \in \pi_L $ by $a$ and $b$, respectively.
Then, by definition, the fundamental 3-class $[\mathcal{S}_{\mathrm{id}_{Q_L}}] $
is given by
$$ - (b \lhd a,a, b )-(b \lhd a,b, a )+((b \lhd a) \lhd b,a, a\lhd b )+( b,b,b \lhd a ) \in C_3^Q( Q_L;\Z). $$
Notice that the first term is sent to zero by $\varphi_*$. Then, $\varphi_*[\mathcal{S}_{\mathrm{id}_{Q_L}}] $ is computed as
\vskip -2.1pc
{\normalsize
\begin{align*}
\lefteqn{} \\
& - ( b \lhd a,a, b ,p ) + ((b \lhd a) \lhd a ,a , b ,p ) + ((b \lhd a) \lhd b ,a \lhd b , b ,p ) - ((( b \lhd a) \lhd a)\lhd b ,a \lhd b , b ,p)\\
& - ( b \lhd a,b, a ,p ) + ((b \lhd a) \lhd b ,b ,a ,p) + ((b \lhd a) \lhd a ,b \lhd a, a ,p ) - ((( b \lhd a) \lhd b)\lhd a ,b \lhd a , a ,p ) \\
& +((b \lhd a) \lhd b,a, a\lhd b ,p ) -(((b \lhd a) \lhd b )\lhd a ,a, a\lhd b ,p) -(((b \lhd a) \lhd b) \lhd (a\lhd b ),a\lhd (a\lhd b ), a\lhd b ,p ) \\
& + ((((b \lhd a) \lhd b) \lhd a )\lhd (a\lhd b ),a\lhd (a\lhd b ), a\lhd b ,p).
\end{align*} }
As matters now stand, they seem complicated.

Thus, following Theorem \ref{class}, we consider the well-known holonomy representation $ \rho :\pi_L \ra PSL_2(\Z[\omega])$,
where $\omega$ is $ (-1 + \sqrt{-3})/2$.
This is represented by the $X_{\mathbb{C}}$-coloring $\mathcal{C}$ in Figure \ref{ftf}. 
Accordingly, if we replace $a$ by $(1,0)$ and $b$ by $(0, \omega)$, the 3-cycle $\varphi_*[\mathcal{S}_{\mathrm{id}_{Q_L}}] $ above is reduced to \vskip -2.1pc
{\normalsize
\begin{align*}
\lefteqn{} \\
& - ( (-\omega ,\omega ),(1,0) , (0,\omega),p \bigr) +\bigl((-2\omega, \omega),(1,0) , (0,\omega) ,p\bigr) + \bigl((-\omega ,\omega+1 ),(1,\omega-1 ) , (0,\omega) ,p\bigr)\\
& - \bigl(( -2 \omega, \omega+2 ) ,(1,\omega -1 ), (0,\omega) ,p \bigr)\\
& - ( (-\omega ,\omega ), (0,\omega),(1,0) ,p\bigr)+ \bigl( (-\omega ,\omega +1 ) , (0,\omega) , (1,0) ,p \bigr) + \bigl( (-2 \omega ,\omega ) ,(-\omega, \omega) , (1,0) ,p ) \\
& - \bigl((-2\omega-1, \omega +1 ) ,(-\omega, \omega) ,(1,0) ,p\bigr) \\
& +\bigl((-\omega, 1+\omega) ,(0. \omega ), (1, \omega-1) ,p\bigr) - \bigl((-\omega, 2+\omega) ,(0. \omega ), (1, \omega-1) ,p\bigr) \\
& -\bigl( (-2\omega-1, 4),(-\omega, 1+\omega) , (1, \omega-1) ,p \bigr) + \bigl( (-2\omega-2, 6 -\omega ),(-\omega, 1+\omega) , (1, \omega-1) ,p \bigr).
\end{align*}}
Hence, for example, if $p=(0,1) $ and we apply the composite $\beta $ to this cycle,
we can describe explicitly the fundamental 3-class by Theorem \ref{class}. However,
the description forms long; we omit writing it.

\vskip -0.185pc
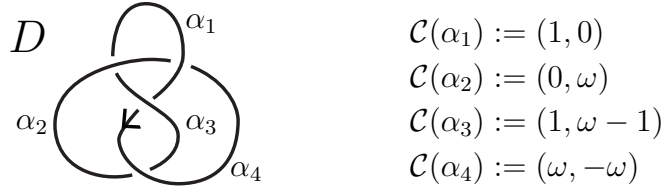
\begin{figure}[htpb]
\begin{center}
\begin{picture}(10,60)
\put(-128,42){\LARGE $D $}
\put(-125,13){\large $\alpha_2 $}
\put(-61,51){\large $\alpha_1 $}
\put(-61,13){\large $\alpha_3 $}
\put(-44,-5){\large $\alpha_4 $}

\put(24,46){\large $\mathcal{C}(\alpha_1):= (1,0) $}
\put(24,29){\large $\mathcal{C}(\alpha_2):= (0,\omega ) $}
\put(24,12){\large $\mathcal{C}(\alpha_3):= (1 , \omega-1)$}
\put(24,-5){\large $\mathcal{C}(\alpha_4):= (\omega,-\omega )$}
\put(-116,22){\pc{pic12b}{0.43104}}

\end{picture}
\end{center}\caption{\label{ftf}The holonomy representation of $4_1$ as an $X$-coloring. Here $\omega =(-1 + \sqrt{-3})/2$. }
\end{figure}

\section{Proofs of the main theorems.}\label{s33}
We will prove the theorems in \S 2.
If $L$ is the trivial knot, the theorems are obvious. Thus, we may assume that $L$ is non-trivial in what follows.
We begin by proving Theorem \ref{mainthm1}.
Since the discussion is not functorial,
the proof may seem intricate.

\subsection{Proof of Theorem \ref{mainthm1}.}\label{2363344}

\begin{proof}Recall the complexes in Sections \ref{s11}--\ref{s112}.
Since they are 
functorial by construction,
we obtain a commutative diagram:
\begin{equation}\label{aa}{\normalsize
\xymatrix{
C_*^R (Q_L ; \Z ) \ar[rr]^{\varphi_* } \ar[d]_{ f_* }& &
C_*^{\Delta } (Q_L ; \Z ) \ar[d]_{ f_* } \ar@/_1.3pc/[rr]_{\beta} & & C_*^{\rm gr } (\pi_L ,\partial \pi_L ; \Z ) \ar[d]_{ f_* } \ar[ll]_{\alpha }
\\
C_*^R (X; \Z ) \ar[rr]^{\varphi_* } & &
C_*^{\Delta } (X ; \Z ) & & C_*^{\rm gr } (G ,K ; \Z ) \ar[ll]^{\alpha }.
}}\end{equation}
Since $ L$ is either a prime non-cable knot or a hyperbolic link by assumption,
the right $\beta $ comes from the quasi-isomorphism $\alpha$ in Examples \ref{tuu} and \ref{knotmal}.



We will explain \eqref{kaa2} below.
Recall the quandle 3-class $[\sh_{\mathrm{id}_{Q_L}}]$ from Example \ref{r11}.
Then, denoting by $[\pi_L, \partial \pi_L ]$
a generator of $H_3^{\rm gr} (\pi_L, \partial \pi_L ) \cong \Z $, the diagram \eqref{aa} admits $N_L \in \mathbb{Z} $ such that
$$ \beta \circ \varphi_3 ([\sh_{\mathrm{id}_{Q_L}} ]) = N_L [\pi_L, \partial \pi_L ] \in H_3^{\Delta } (Q_L ;\Z ) \cong \Z. $$
Then, for every group relative 3-cocycle $\theta$, setting $ \phi_{ \theta}= (\beta \circ \varphi_3)^* \circ f^* ( \theta) $ yields the equalities
\begin{equation}\label{kaa2} \langle \phi_{ \theta} , \ [\sh_{\mathrm{id}_{Q_L}} ] \rangle=
\langle f^* (\theta) , \ \beta \circ \varphi_3 ([\sh_{\mathrm{id}_{Q_L}} ] )\rangle= N_L \langle f^* (\theta) , \ [\pi_L, \partial \pi_L] \ \rangle \in A_G.
\end{equation}
Hence, it is enough to show $N_L= \pm1$. Actually, if $N_L= \pm1$,
we canonically obtain a map $ \phi_{ \theta}: \mathrm{Reg}_D \times (\mathrm{Arc}_D)^2 \ra A$ from the definition of $[\sh_{\mathrm{id}_{Q_L}} ]$,
which justifies the desired equality \eqref{11}.

If $L $ is a hyperbolic link, and $f$ is the associated holonomy $\pi_L \ra P SL_2(\C)$ with $\theta =\mathrm{CS} $, we immediately have $N_L= \pm1$ by \eqref{kaa}.

It remains to work in the case where $L$ is a knot which is neither hyperbolic nor cable.
Then, thanks to the JSJ-decomposition \eqref{13882} above,
there is a solid torus $V_1 \subset S^3$, such that $V_1 $ contains the link $L$ and $V_1 \setminus L$ is either hyperbolic or reducible.

In the former case, when $V_1 \setminus L$ is hyperbolic, we now give a diagram (\ref{aa22}) below.
Regarding $V_1 \setminus L$ as a hyperbolic link, $L'$, in the 
3-sphere, we denote by $K_2$ another subgroup of $\pi_1 ( S^3 \setminus L' )$ arising from $\partial ( V_1) $.
Then, the link quandle $Q_{L'} $ is bijective to $ K_1 \backslash \pi_{L'}\sqcup K_2 \backslash \pi_{L'}$ by definition.
We consider the homogenous quandle of the form $ K_1 \backslash \pi_{L}\sqcup K_2 \backslash \pi_{L} $, and denote it by $Q_W$.
Then, the inclusion $j: S^3 \setminus L' \hookrightarrow S^3 \setminus L $ defines a quandle map $j_*: Q_{L'} \ra Q_W$, and we take a canonical injection $Q_L \hookrightarrow Q_W$.
In summary, we have the commutative diagram on the third homology groups:
\begin{equation}\label{aa22}{\normalsize
\xymatrix{
H_3^Q (Q_L ; \Z ) \ar[rr]^{i_*^Q } \ar[d]_{ \varphi_* } & &
H_3^Q (Q_W ; \Z ) \ar[d]_{ \varphi_* } & & H_3^Q (Q_{L' } ; \Z ) \ar[ll]_{j_*^Q } \ar[d]_{ \varphi_* }
\\
H_3^{\Delta } (Q_{L} ; \Z ) \ar[rr]^{i_* } & &
H_3^{\Delta } (Q_{W} ; \Z )& & H_3^{\Delta } (Q_{L'} ; \Z ) \ar[ll]_{j_* } .
}}\end{equation}
In the appendix (Lemma \ref{4321}), we later show the isomorphisms $ H_3^Q (Q_W ; \Z ) \cong H_3^Q (Q_{L' } ; \Z ) \cong \Z^2 $,
such that the matrixes of $i_*^Q$ and $j_*^Q$ are given by $(1,0)$ and $ {\small \left(
\begin{array}{cc}
1 & 1 \\
1 & 1 \end{array}
\right) }$, respectively. Hence, since the right $ \varphi_*$ is surjective by the former discussion,
the commutativity with \eqref{13882} implies $N_L=\pm 1$ as desired.

Finally, we discuss the case where $V_1 \setminus L$ is reducible.
For this, put a hyperbolic link $L_0 $ in $D^2 \times S^1$, and attach it to $V_1 \setminus L$.
Then, the situation is reduced to the above case. Hence, the proof completes.
\end{proof}
\begin{proof}[Proof of Theorem \ref{clA1ee22}] Since $L$ is neither composite nor cable,
the above discussion readily implies the bijectivity of $\varphi_3 $.
\end{proof}

\begin{rem}\label{ssss}
We mention the assumption of hyperbolicity.
As a counter example, consider the Hopf link $L$.
Since $\pi_1(S^3 \setminus L) \cong \Z^2$ and the boundary inclusions induce isomorphisms on $\pi_1$,
the link quandle $Q_L$ consists of two points. Hence, the chain map $\varphi_*$ is zero by definition.
However, the pairing $ \langle \theta , \ f_* [E_L, \ \partial E_L] \rangle $ is not always trivial. In summary, it is seemingly hard to generalize the theorem \ref{mainthm1} in every link case.


\end{rem}
\subsection{Proof of Theorem \ref{mainthm133}; Malnormality and Transfer.}\label{2364334}

Next, turning to malnormality and transfer, we will complete the proof of Theorem \ref{mainthm133}.
For this, we shall mention
a key proposition obtained from transfer.
\begin{prop}[{cf. Transfer; see \cite[\S III.10]{Bro}}]\label{m44}
Let $K_1, \dots, K_{\# L}$ be finite subgroups of $G$, and let $Y$ be $ \sqcup_i (K_{i } \backslash G)$ with action.
Assume that 
all the order $|K_i|$ is invertible in the coefficient group $A$.
Then, the chain map $$\alpha :( \mathcal{P}_* \otimes_{\Z} A , \partial_*) \lra ( C_*^{\Delta}(Y ) \otimes_{\Z} A , \partial_*) $$ with coefficients $A$ is
a quasi-isomorphism.
\end{prop}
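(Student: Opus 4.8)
The plan is to show that $\alpha$ remains a quasi-isomorphism after $\otimes A$ by playing the two complexes off against a common resolution and feeding transfer in at the level of the permutation modules that build $C_*^\Delta(Y)$. Both $\mathcal{P}_* = C_*^{\rm gr}(G,\K)$ (free, by Proposition \ref{tuk32au}) and $C_*^\Delta(Y)=\Coker(Q_*)$ (acyclic, by Definition \ref{de3553}) are $\Z[G]$-resolutions of $M_0:=\Coker(P_I)$, and $\alpha$ lifts $\id_{M_0}$. Hence $\mathcal{P}_*\otimes_{\Z[G]}A$ computes $\mathrm{Tor}^{\Z[G]}_*(M_0,A)=H_*(G,\K;A)$, and the entire task is to prove that $C_*^\Delta(Y)\otimes_{\Z[G]}A$ computes the same groups through $\alpha$.

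First I would record the transfer input. Each $C_n^{\rm pre}(Y)=\Z[Y^{n+1}]$ is a permutation module whose point-stabilizers are intersections $\bigcap_j g_j^{-1}K_{i_j}g_j$; these are finite, with order dividing some $|K_i|$, hence invertible in $A$. The averaging assignment $A[K_i\backslash G]\lra A[G]$, $K_i g\longmapsto |K_i|^{-1}\sum_{k\in K_i}kg$, is a $G$-equivariant splitting of the coset projection, so $A[K_i\backslash G]$ and therefore $A[Y^{n+1}]$ is a projective $A[G]$-module; equivalently $\mathrm{Tor}^{\Z[G]}_{>0}(\Z[Y^{n+1}],A)=0$. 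This is exactly the transfer statement of \cite[\S III.10]{Bro}.

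Next, as in the proof of Proposition \ref{tukau}, I would pass to the subcomplex $C_*^{\neq}(Y)$ of distinct tuples, through which $\alpha$ already factors (Example \ref{tukau5}). Stabilizers of distinct tuples are again finite of invertible order, so $C_*^{\neq}(Y)\otimes_\Z A$ is a complex of projective $A[G]$-modules. The inclusion $C_*^{\neq}(Y)\hookrightarrow \Coker(Q_*)$ is a quasi-isomorphism, the degenerate quotient being contractible by the simplicial homotopy of \cite[Proposition 3.20]{NM}; since that homotopy is $\Z[G]$-equivariant and $\Z$-linear, it descends after $\otimes_\Z A$, so $C_*^{\neq}(Y)\otimes A\to C_*^\Delta(Y)\otimes A$ stays a quasi-isomorphism. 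Finally $\alpha\otimes A\colon \mathcal{P}_*\otimes A\to C_*^{\neq}(Y)\otimes A$ is a map of projective $A[G]$-resolutions of $M_0\otimes A$ lifting the identity, hence a homotopy equivalence; composing yields the desired quasi-isomorphism.

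The main obstacle is the passage $C^{\neq}\hookrightarrow C^\Delta$. I must check that the contracting homotopy on the degenerate part is genuinely $\Z[G]$-equivariant and $\Z$-linear so that it base-changes, and, more seriously, that $C_*^{\neq}(Y)$ actually resolves $M_0$, which uses that $Y$ is infinite (the situation for link groups). When $G$ is finite the set $Y$ is finite, $C^{\neq}$ truncates, and this route breaks down; there one must instead build an explicit averaging transfer $t\colon C_*^\Delta(Y)\to \mathcal{P}_*$ with $\alpha\circ t\simeq \id$ and $t\circ\alpha\simeq \id$ after $\otimes A$, the homotopies carrying the factors $|K_i|^{-1}$. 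Checking that these averaged maps are compatible with the quotient defining $\Coker(Q_*)$ is the delicate point, since $\Coker(Q_n)$ is \emph{not} itself a sum of invertible-stabilizer permutation modules (trivial summands can occur), so one cannot argue term by term.
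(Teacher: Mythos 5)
Your underlying idea---that inverting $|K_i|$ makes $A[K_i\backslash G]$, and hence each permutation module $A[Y^{n+1}]$, a projective $A[G]$-module via the averaging splitting $K_ig\mapsto |K_i|^{-1}\sum_{k\in K_i}kg$---is exactly the transfer argument the paper invokes: its own proof of Proposition \ref{m44} is nothing but the one-line citation to \cite[\S III.9--10]{Bro}, so you are supplying details the paper omits. Your endgame is also the right one: once both sides are acyclic complexes of projective $A[G]$-modules over $\Coker(P_I)\otimes_{\Z}A$, the lift $\alpha\otimes A$ of the identity is a chain homotopy equivalence by the comparison theorem, which is the strong form of ``quasi-isomorphism'' actually needed so that the isomorphism survives the further application of $\otimes_{\Z[G]}M$.

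The genuine gap is the detour through $C_*^{\neq}(Y)$. That subcomplex is acyclic only when $Y$ is infinite---this is an explicit hypothesis of Proposition \ref{tukau}---i.e.\ only when $G$ is infinite, whereas Proposition \ref{m44} carries no such hypothesis and its principal purpose is precisely the opposite case: finite subgroups $K_i$ of a finite $G$, as in the Dijkgraaf--Witten setting behind Theorem \ref{mainthm133}(ii). You flag this failure yourself and propose to fall back on an explicit averaging transfer $t$, but you only name its delicate points without constructing it, so as written the statement is unproved in the case it was designed for. Moreover the detour buys you nothing: since every $G$-orbit in $Y^{n+1}$ has stabilizer a finite subgroup of a conjugate of some $K_{i}$, hence of order dividing $|K_i|$ and invertible in $A$, the whole module $A[Y^{n+1}]$ is already projective with no need to restrict to pairwise-distinct tuples; and your (legitimate) worry about the quotient $\Coker(Q_n)$ is not avoided by passing to $C^{\neq}$, because $C_n^{\neq}(Y)$ is by definition \eqref{777} a submodule of $\Coker(Q_n)$ and inherits the same issue. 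The clean repair is to delete the $C^{\neq}$ step entirely, verify once that $\Coker(Q_n)\otimes_{\Z}A$ remains a projective $A[G]$-module (e.g.\ by exhibiting an $A[G]$-splitting of $Q_n\otimes A$, which is the one point genuinely requiring attention and should be spelled out), and then apply the comparison theorem directly, with no case distinction on the cardinality of $Y$.
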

\begin{proof} According to the same discussion on the transfer; see \cite[\S III.9--10]{Bro}.
\end{proof}

\begin{proof}[Proof of Theorem \ref{mainthm133}]
First, we assume the assumption (i), that is, $ \mathcal{K} \subset G$ is malnormal.
Then, Proposition \ref{tukau} again ensures a quasi-inverse $\beta': C_*^{\Delta } (X )_G \ra C_*^{\rm gr } (G , \mathcal{K} )$, where $X= \sqcup_{i \in I}K_i \backslash G $ as in Example \ref{exaqSGH}.
Then, for any group 3-cocycle $\theta $, we set $ \phi_{ \theta}= (\beta' \circ f_* \circ \varphi_3)^* ( \theta) \in A $ as a quandle 3-cocycle,
and the group 3-cocycle $\theta$ is represented by a map $X^4 \ra A$.

We will show the equality below. Let $L$ be a hyperbolic link or a prime non-cable knot,
which ensures the 3-class $[\pi_L, \partial \pi_L ] $ by the malnormal property of $L$.
Compute the pairing $ \langle \phi_{ \theta} , [\mathcal{S}_f] \rangle$ as 
\begin{equation} \langle \phi_{ \theta} , [\mathcal{S}_f] \rangle = \langle
f^* \circ (\beta' )^* ( \theta) , \ (\varphi_3)_* [\mathcal{S}_f] \rangle =
\langle f^* \circ (\beta' )^* ( \theta) , \ (\alpha)_* [\pi_L, \partial \pi_L ] \rangle \notag \end{equation}
\begin{equation}\label{bb111}
\ \ \ \ \ \ \ \ \ \ \ \ \ \! = \langle \alpha^* \circ f^* \circ (\beta' )^* ( \theta) , \ [\pi_L, \partial \pi_L ] \rangle= \langle f^* ( \theta) , \ [\pi_L, \partial \pi_L ] \rangle .
\end{equation}
Here, the second equality is obtained by \eqref{kaa2} and Proposition \ref{tukau}, and the others are done by functoriality.
From the definition of $ [\mathcal{S}_f]$ and $ \phi_{ \theta} $, this equality \eqref{bb111} is equivalent to the desired statement.

Finally, we change to the assumption (ii). By Proposition \ref{m44}, we can similarly get a quasi-inverse $\beta$ of $\alpha $ in the coefficient group $A$.
Thus, the required equality is done by the same discussion as \eqref{bb111}; hence, the desired statement also holds in (ii).
\end{proof}

\subsection{Examples of 3-cocycles. }\label{exex}
Finally, 
we will point out that, in some cases, such group 3-cocycles $\theta $ have much simpler expressions in terms of quandle cocycles.
We end this section by giving two examples.
\begin{exa}[cf. \cite{Nos3}]\label{ma33ee1}
First, we will observe some triple Massey products.
This example is essentially due to \cite[\S 4.2]{Nos3}.
Regard the finite field $\F_q$ as the abelian group $ (\Z/p) ^m$, where $q=p^m$ and $p\neq 2$.
Consider the (nilpotent) group on the set
\[ G:= \Z/2 \times \F_q \times ( \F_q \wedge_{\Z} \F_q ) , \]
with operation
\begin{equation}\label{clauwens} (n, a , \kappa) \cdot (m, b, \nu )= (n+m, \ (-1)^m a +b, \ \kappa + \nu + [(-1)^m a \otimes b]). \ \end{equation}
Letting the subgroup $ K$ be $\Z/2 \times \{ 0 \} \times \{ 0 \} \subset G$ and $x_0 \in K$ be $(1,0,0)$, we have the quandle of the form $X= \F_q \times ( \F_q \wedge_{\Z} \F_q ) $.

The cohomology $H^3(G;\F_q) $ is complicated. 
In fact, the cohomology has 3-cocycles $ \theta_{\Gamma}$, which are derived from triple Massey product (see \cite[Proposition 4.8]{Nos3}).
However, the author \cite[Lemma 4.7]{Nos3} showed that
the pullback $\varphi_3^* \theta_{\Gamma}:X^3 \ra \F_q$ is formulated as
$$ (\varphi_3^* \theta_{\Gamma}) \bigl((x, \alpha), \ (y, \beta),\ (z, \gamma) \bigr) = (x -y)^{q_1} (y-z)^{q_2+q_3 } z^{q_4}. $$
with some prime powers $q_1 , q_2,q_3,q_4 \in \Z$.
Since this formula is relatively simple, we can compute the relative fundamental 3-class in an easier way than the grouptheoretic method; 
see \cite[\S 5]{Nos3} for a computation.
\end{exa}

\begin{exa}\label{3333ee1}
Next, we focus on the quandles dealt in the paper \cite{Nos3}, and show Proposition \ref{oo24} below. The result will be useful to show theorems in \cite{Nos9}.

Let $L$ be a prime knot or a hyperbolic link, and $\mathcal{O}_{\ell}$ be the set $\{ g^{-1} \mathfrak{m}_{\ell} g\}_{g \in \pi_L}$.
Fix, the inclusion from the link quandle $Q_L$ into $ s\sqcup^{\# L}_{\ell =1} \mathcal{O}_{\ell} $ which sends $K_{\ell} g $ to $ g^{-1} \mathfrak{m}_{\ell} g. $
Given a right $\Z[\pi_L ]$-module $M $,
we have the semi-direct product $G:= M \ltimes \pi_L$.
Fix $b_1, b_2, \dots, b_{\# L} \in M$.
Then, for $\ell \leq \# L$, we fix $b_\ell \in M$,
and consider the subgroup 
$$K_{\ell } := \bigl\{ \bigl( b_\ell (1-\mathfrak{m}_\ell ^s \mathfrak{l}_\ell ^t ) , \ \mathfrak{m}_\ell ^s \mathfrak{l}_\ell ^t \bigr) \in M \ltimes \pi_L \bigl| \ s,t \in \Z^2 \ \bigr\} .$$
Then, we have a quandle $ X= \sqcup_{i \leq \#L } K_{\ell} \backslash (M \ltimes \pi_L )$.
Then, according to the above inclusion, $X$ is bijective to $ M \times \sqcup^{\# L}_{\ell =1} \mathcal{O}_{\ell}$,
and the quandle structure is equivalent to
$$ (a ,g )\lhd (b, h )= ((a-b) h +b, h^{-1}gh )$$
for $a,b \in M$ and $g, h \in \sqcup^{\# L}_{\ell =1} \mathcal{O}_{\ell}$.
We notice two lemmas:

\begin{lem}\label{oo2388}
If the pair $(\pi_L , \partial \pi_L)$ is malnormal,
so is the pair $ K_{\ell } \subset M\rtimes G$.
\end{lem}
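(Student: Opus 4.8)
The plan is to transfer malnormality from the pair $(\pi_L, \partial \pi_L)$ inside $\pi_L$ up to the pair $K_\ell \subset M \rtimes \pi_L$, using the fact that the semidirect-product structure keeps the $\pi_L$-component under control. First I would unwind the definitions: an element of $G = M \rtimes \pi_L$ is a pair $(a,g)$, and its $\pi_L$-component is obtained by the projection $\pi : M \rtimes \pi_L \to \pi_L$, $(a,g) \mapsto g$, which is a group homomorphism. The subgroup $K_\ell$ is exactly the set of pairs whose $\pi_L$-component lies in the peripheral subgroup $\mathcal{P}_\ell = \langle \mathfrak{m}_\ell, \mathfrak{l}_\ell \rangle$, together with the prescribed ``twisting'' $M$-component $b_\ell(1 - \mathfrak{m}_\ell^s \mathfrak{l}_\ell^t)$. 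In particular $\pi(K_\ell) = \mathcal{P}_\ell$, the $\ell$-th peripheral subgroup of $\pi_L$.

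The heart of the argument is to verify condition $(\star)$: for any $(i,j)$ and any $\gamma = (a,g) \in G$ with $\gamma \notin K_j$, the intersection $\gamma^{-1} K_i \gamma \cap K_j$ is trivial. I would take an arbitrary nontrivial element $\eta$ of this intersection and apply the projection $\pi$. Since $\pi$ is a homomorphism, $\pi(\gamma^{-1} K_i \gamma \cap K_j) \subseteq g^{-1}\, \pi(K_i)\, g \cap \pi(K_j) = g^{-1} \mathcal{P}_i\, g \cap \mathcal{P}_j$. By the malnormality of $(\pi_L, \partial \pi_L)$, either this intersection already forces $g^{-1} \mathcal{P}_i g \cap \mathcal{P}_j = \{1\}$ (when $g \notin \mathcal{P}_j$), so that $\pi(\eta)=1$, i.e.\ $\eta$ lies in the kernel $M$; or else $g \in \mathcal{P}_j$, which I will treat separately below. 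In the first branch, I would then show that an element of $K_i$ (or of a conjugate of $K_i$) lying in $M = \ker\pi$ must be the identity: by the explicit parametrization, a pair in $K_i$ with trivial $\pi_L$-component has $s=t=0$, hence $M$-component $b_i(1 - 1) = 0$, so $\eta = (0,1) = 1_G$, giving triviality.

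The remaining case, $g \in \mathcal{P}_j$, is where the twisting term matters and is the step I expect to be the main obstacle. Here $\gamma = (a,g)$ has $\pi$-component inside $\mathcal{P}_j$, so $\gamma \notin K_j$ can only come from the $M$-component $a$ failing to match the prescribed value $b_j(1 - g)$; I would use precisely this mismatch. Conjugating $K_i$ by $\gamma$ and intersecting with $K_j$ reduces, after projecting, to $\mathcal{P}_i \cap \mathcal{P}_j$ conjugated appropriately; for $i \neq j$ malnormality of the peripheral system again kills the $\pi_L$-part, while for $i = j$ one lands inside a single peripheral $\mathbb{Z}^2$ and must check that the affine conditions defining $K_j$ and $\gamma^{-1} K_j \gamma$ are compatible only when $\gamma \in K_j$. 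Concretely, writing out the $M$-components and using that the $\mathcal{P}_j$-action on $M$ has $b_j(1-\mathfrak{m}_j^s\mathfrak{l}_j^t)$ as its characteristic cocycle, I would show the two twisting values agree for all peripheral exponents iff $a = b_j(1-g)$, i.e.\ iff $\gamma \in K_j$, contradicting the hypothesis $\gamma \notin K_j$ and forcing the intersection to be trivial. Assembling the three cases yields $(\star)$ for $K_\ell \subset M \rtimes \pi_L$, completing the proof.
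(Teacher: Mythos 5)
The paper states Lemma \ref{oo2388} without any proof (it is merely ``noticed''), so there is no argument of the author's to compare yours against; I can only assess your proposal on its own terms. Your overall strategy --- project to $\pi_L$ via $\pi(a,g)=g$, use malnormality of the peripheral system to force any element of $\gamma^{-1}K_i\gamma\cap K_j$ into $\ker\pi=M$, and then observe that the only element of $K_j$ with trivial $\pi_L$-component is $(b_j(1-1),1)=1_G$ --- is the natural one, and it correctly disposes of the case $g\notin\mathcal{P}_j$, as well as of $i\neq j$ with $g\in\mathcal{P}_j$ granting $\mathcal{P}_i\cap\mathcal{P}_j=\{1\}$ (true for link groups, though not literally a consequence of condition $(\star)$ as the paper states it, since $(\star)$ is vacuous at $g=1_G$).

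The case you yourself flag as the main obstacle, $i=j$ and $g\in\mathcal{P}_j$, is where the argument genuinely breaks. With the right-module semidirect product multiplication $(a,g)(b,h)=(ah+b,gh)$, take $\gamma=(a,g)$ and $k=(b_j(1-p),p)$ with $p\in\mathcal{P}_j$; since $\mathcal{P}_j$ is abelian one computes $\gamma^{-1}k\gamma=\bigl(a(1-p)+b_j(1-p)g,\ p\bigr)$, which lies in $K_j$ if and only if $\bigl(a-b_j(1-g)\bigr)(1-p)=0$ in $M$. Setting $c=a-b_j(1-g)$, the hypothesis $\gamma\notin K_j$ says only that $c\neq 0$, and the intersection $\gamma^{-1}K_j\gamma\cap K_j$ is nontrivial exactly when \emph{some} nontrivial $p\in\mathcal{P}_j$ fixes $c$. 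Your claim that ``the two twisting values agree for all peripheral exponents iff $a=b_j(1-g)$'' is therefore misquantified --- a single nontrivial $p$ with $cp=c$ already yields a nontrivial intersection --- and even the correctly quantified statement (no nonzero $c\in M$ is fixed by any nontrivial peripheral element) is an extra hypothesis on the module $M$ that does not follow from malnormality of $(\pi_L,\partial\pi_L)$. Indeed, if $M$ carries the trivial $\pi_L$-action, then $K_j=\{0\}\times\mathcal{P}_j$ and $\gamma=(c,1_{\pi_L})$ with $c\neq 0$ centralizes $K_j$, so $\gamma^{-1}K_j\gamma\cap K_j=K_j$ and malnormality fails. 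So the lemma requires, and your proof must explicitly invoke, a condition such as $M^{\langle p\rangle}=0$ for every nontrivial $p\in\mathcal{P}_j$; without such a hypothesis the statement is false as written, and with it your case analysis closes immediately.
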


\begin{lem}\label{oo23}
Take a $G$-invariant multi-linear map $ \psi: M^n \ra A$, where $G$ trivially acts on $A$.
Let us identify the quandle on $X$ with that on $ $.
Then, the following map from the normalized complex (see Example \ref{tukau5}) is an $n$-cocycle. 
$$\Psi: C_n^{\rm Nor}(X)_{\As(X)} \lra A ; \ \ \ ((a_0,g_0) \dots, ( a_n,g_n)) \longmapsto
\psi (a_0- a_1, a_1 -a_2,\dots, a_{n-1} -a_n). $$
\end{lem}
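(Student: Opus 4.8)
The plan is to verify the three conditions that make $\Psi$ a well-defined $n$-cocycle on the coinvariant normalized complex $C_n^{\rm Nor}(X)_{\As(X)}$: (a) that $\Psi$ vanishes on the degenerate submodule, (b) that $\Psi$ is invariant under the $\As(X)$-action so that it descends to the coinvariants, and (c) that $\Psi$ is annihilated by the simplicial coboundary. The guiding observation is that $\Psi((a_0,g_0),\dots,(a_n,g_n))$ depends on the tuple only through the successive differences $a_{i-1}-a_i\in M$; all three conditions then reduce to the multilinearity and $G$-invariance of $\psi$. Condition (a) is immediate: if $(a_i,g_i)=(a_{i+1},g_{i+1})$ then $a_i-a_{i+1}=0$, so one slot of $\psi$ is zero and $\Psi=0$ by multilinearity, whence $\Psi$ kills the degenerate submodule and descends to $C_n^{\rm Nor}(X)$.

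For condition (b) it suffices to test invariance under the generators $e_{(b,h)}$ of $\As(X)$, which act on $X$ by $\bullet \lhd (b,h)$. Using the explicit operation $(a,g)\lhd(b,h)=((a-b)h+b,\ h^{-1}gh)$, the $M$-coordinates of two consecutive vertices both acquire the same translation by $b$, so their difference transforms simply as $(a_{i-1}-a_i)\mapsto (a_{i-1}-a_i)h$, the translation cancelling. Consequently the value on the transformed tuple is $\psi((a_0-a_1)h,\dots,(a_{n-1}-a_n)h)$, which equals $\psi(a_0-a_1,\dots,a_{n-1}-a_n)=\Psi$ by the $\pi_L$-invariance of $\psi$. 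I expect this to be the main obstacle: one must pin down the $\As(X)$-action precisely through the identification $X\cong M\times\sqcup_\ell\mathcal{O}_\ell$ and observe that its translation part is independent of the vertex being acted on, since that is exactly what allows the differences to transform linearly and the $G$-invariance of $\psi$ to absorb the twist.

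For condition (c) I would compute $(\delta\Psi)((a_0,g_0),\dots,(a_{n+1},g_{n+1}))=\sum_{i=0}^{n+1}(-1)^i\Psi(\dots,\widehat{(a_i,g_i)},\dots)$. Writing $d_j=a_{j-1}-a_j$, deleting an outer vertex ($i=0$ or $i=n+1$) simply drops $d_1$ or $d_{n+1}$ from the list of arguments, whereas deleting an inner vertex replaces the adjacent pair by the single merged difference $d_i+d_{i+1}$; expanding that slot by additivity of $\psi$ splits each inner term into two, one with $d_i$ dropped and one with $d_{i+1}$ dropped. Collecting the resulting terms by which difference has been removed, each surviving $n$-tuple appears exactly twice with opposite signs, so the alternating sum telescopes to zero. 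This step uses only multilinearity, so the cocycle property is purely formal once (a) and (b) are in place, and the entire content of the lemma is concentrated in the concrete affine description of the quandle action exploited in (b).
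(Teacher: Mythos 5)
Your verification is correct. The paper states Lemma \ref{oo23} without proof (it is introduced only by the phrase ``We notice two lemmas''), so there is no argument in the text to compare against; your three checks --- vanishing on degenerate tuples by multilinearity, invariance under the generators $e_{(b,h)}$ of $\As(X)$ using the affine formula $(a,g)\lhd (b,h)=((a-b)h+b,\ h^{-1}gh)$ so that consecutive differences transform as $(a_{i-1}-a_i)\mapsto (a_{i-1}-a_i)h$ and the $G$-invariance of $\psi$ absorbs the twist, and the telescoping of the simplicial coboundary via additivity in the merged slot $d_i+d_{i+1}$ --- are exactly the content of the statement and are each carried out correctly. The one point left implicit (by you and by the paper alike) is that $C_n^{\rm Nor}(X)$ is by definition a quotient of $\Coker(Q_n)$, so for links with $\# L\geq 2$ one should additionally check that $\Psi$ annihilates the image of $Q_n$, which identifies $(K_ig_0,\dots,K_ig_n)$ with $(K_jg_0,\dots,K_jg_n)$ across components; for a knot $C_n^{\rm red}(G,I)=0$ and this is vacuous, so your argument is complete in that case.
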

Then, it is sensible to consider the pullbacks of $\varphi$ and $\alpha$ in the diagram \eqref{aa}. Here we focus on the third case $n=3$.
Then, as in \cite[Theorem 5.2]{Nos2}, we can easily see that 
the pullback of the IK map $\varphi$ is formulated by
$$ \varphi^*_3 (\Psi) ( (a_1, z_1 ), (a_2, z_2 ), (a_3, z_3 )) = \psi \bigl( (a_1 -a_2 )(1- z_2) , a_2 -a_3, \ a_3 -a_3 z_3^{-1}) . $$
Moreover, consider another pullback by $\alpha $,
where we use the expression of $\alpha $ in Example \ref{tukau5}.
Notice that the projection $M \rtimes G \ra M \times \mathcal{O}_{\ell}$,
is equal to the map which takes $(m,g)$ to $ (y_{\ell} g + m, g^{-1}\mathfrak{m}_{\ell}g )$.
Then, in terms of the non-homogenous complex in Example \ref{rei122}, the pullback, $\alpha^* (\Psi)$, is represented by
\begin{equation}\label{ff4} \theta_{\ell} : \bigl( (a_1, g_1 ), (a_2, g_2 ), (a_3, g_3 )\bigr) \longmapsto
\psi \bigl( (a_1 +y_\ell -y_\ell \cdot g_1) g_2 g_3 , (a_2+y_\ell -y_\ell \cdot g_2) \cdot g_3 , \ a_3+b_\ell -b_\ell \cdot g_3 ) ,\end{equation}
and $\eta_{\ell} =0.$
Combing with Theorems \ref{mainthm1} and \ref{mainthm5}, we readily have
\begin{prop}\label{oo24}
As in Theorem \ref{mainthm1}, let $L$ be either a hyperbolic link or a prime knot which is not a cable knot or a torus knot. 
Then, the pairing of the 3-cocycle $ \alpha^* (\Psi) $ in \eqref{ff4} and $[Y_L, \partial Y_L]$ is equal to the quandle cocycle invariant $ \langle \varphi^*_3 (\Psi) , [\mathcal{S}]\rangle$.

Furthermore, if $K$ is the $(n,m)$-torus knot, the same equality holds modulo $nm$.
\end{prop}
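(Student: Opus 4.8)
The plan is to realize this proposition as a direct consequence of Theorem \ref{mainthm1} (equivalently Theorem \ref{mainthm133}), by exploiting that the two cocycles in question both descend from the single relative cocycle $\Psi$ sitting on the middle term of the zigzag \eqref{tukau2}. Indeed, by Lemma \ref{oo23}, $\Psi$ is a cocycle on $C_*^{\Delta}(X)_{\As(X)}$, and the explicit formula \eqref{ff4} together with the formula for $\varphi_3^*(\Psi)$ displayed just above exhibit $\alpha^*(\Psi)$ and $\varphi_3^*(\Psi)$ precisely as the two pullbacks of $\Psi$ along the legs $\alpha$ and $\varphi_3$ of the diagram \eqref{aa}. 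Thus no calculation with these explicit formulas is needed; only the comparison of the two fundamental classes in the middle complex matters.

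First I would record that, since $L$ is a hyperbolic link or a prime non-cable non-torus knot, Theorem \ref{tuu333} (via Examples \ref{tuu} and \ref{knotmal}) gives malnormality, so Proposition \ref{tukau} makes $\alpha$ a quasi-isomorphism. Hence in $H_3^{\Delta}(X)$ the two fundamental classes are proportional, $\varphi_3([\mathcal{S}]) = N_L\cdot \alpha([E_L,\partial E_L])$ for some $N_L \in \Z$, and Theorem \ref{mainthm1} identifies $N_L = \pm 1$; normalizing orientations as in \eqref{kaa2} fixes $N_L = 1$.

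Next I would invoke the adjunction between pullback of cocycles and pushforward of chains. Pairing the homological identity above with $\Psi$ yields
\[
\langle \varphi_3^*(\Psi),\, [\mathcal{S}]\rangle = \langle \Psi,\, \varphi_3([\mathcal{S}])\rangle = \langle \Psi,\, \alpha([E_L,\partial E_L])\rangle = \langle \alpha^*(\Psi),\, [E_L,\partial E_L]\rangle ,
\]
which is exactly the asserted equality. The middle step uses $N_L = 1$ together with the functoriality built into \eqref{aa}, so the hyperbolic and prime non-cable cases are essentially formal once Theorem \ref{mainthm1} is in hand.

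For the $(n,m)$-torus knot the difficulty is that such a knot fails to be malnormal by Theorem \ref{tuu333}, so $\alpha$ is no longer a quasi-isomorphism and the argument above breaks down at the identification of $N_L$. Instead I would route through the cable statement Theorem \ref{mainthm5} and the JSJ piecewise decomposition \eqref{13882}: on the torus summand the relevant relative homology is cyclic, governed by the Seifert fibering, so the comparison constant is only determined modulo $nm$. Accordingly the equality survives after reduction modulo $nm$, which is the second assertion. I expect this modular bookkeeping --- pinning down exactly which integer $nm$ controls the torsion in the torus summand and matching it against the pairing --- to be the main technical obstacle, the remaining cases being automatic from the zigzag \eqref{aa} and Theorem \ref{mainthm1}.
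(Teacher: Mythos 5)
Your proposal is correct and follows essentially the same route as the paper, which simply derives the proposition by combining Theorems \ref{mainthm1} and \ref{mainthm5}: both cocycles are pullbacks of $\Psi$ along the two legs of \eqref{aa}, the fundamental classes match up to sign in the middle complex ($N_L=\pm1$), and the torus-knot case is handled modulo $nm$ via Theorem \ref{mainthm5}. The only detail worth adding is that applying Proposition \ref{tukau} to the pair $K_\ell \subset M\ltimes \pi_L$ uses Lemma \ref{oo2388} to transfer malnormality from $(\pi_L,\partial\pi_L)$ to the semidirect product.
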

\end{exa}

\section{Theorem for cable knots. }\label{2734}
While Theorems \ref{mainthm1} and \ref{mainthm133} assumed non-cabling knots,
this section focuses on cable knots.
To simplify the study, consider a solid torus $V \subset S^3$ such that
$ V \setminus L$ is the $(m,n)$-torus knot.
By the formula \eqref{13882} from the JSJ-decomposition,
it is sensible to consider either the torus knot $T_{m,n}$ in $S^3$ or the solid one in $V$.
We denote the latter by $S_{m,n}$;
Regarding $ S^3 \setminus ( L \sqcup (S^3 \setminus V)) $ as a link complement,
the knot $S_{m,n}$ admits a link-diagram in $\R^2$; see Figure \ref{fig3lo3}.

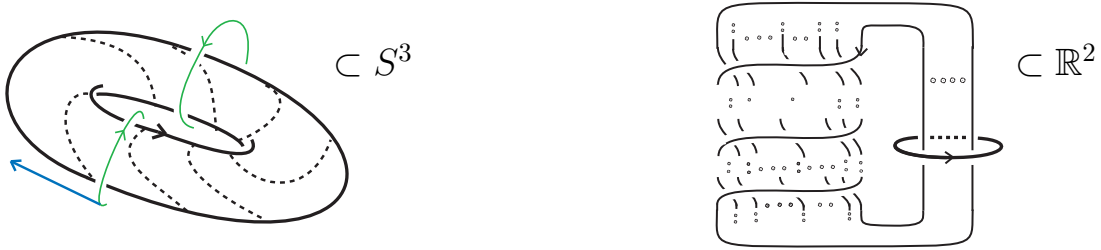
\begin{figure}[htpb]
\begin{center}
\begin{picture}(100,110)
\put(117,48){\pc{torusknot2}{0.25}}
\put(-150,49){\pc{torusknot1}{0.31}}
\put(237,71){\Large $\subset \R^2 $}
\put(-21,71){\Large $\subset S^3 $}
\end{picture}
\end{center}
\vskip -1.737pc
\caption{The torus knot $S_{m,n }$ in the solid torus, and the link diagram on $\R^2$. 
\label{fig3lo3}}
\end{figure}


As in the theorems \ref{mainthm1} and \ref{mainthm133}, 
we get similar results modulo some integer. Precisely, 
\begin{thm}[cf. Theorems \ref{mainthm1} and \ref{mainthm133}. See \S \ref{229123} for the proof.]\label{mainthm5}
Assume that $L$ is either the torus knot $T_{m,n}\subset S^3$ or that $S_{m,n} \subset V$.
Let $N =n $ if $L$ is $S_{m,n} $, and let $N=mn$ if $L$ is $T_{m,n}. $

Then, for any relative group 3-cocycle $ \theta \in H^3(G,\mathcal{K};A)$,
there is a map $ \phi_{\theta }: \mathrm{Reg}_D \times \mathrm{Arc}_D \times \mathrm{Arc}_D \ra A $
for which the following equality holds in the coinvariant $A_G$:
\begin{equation}
\label{1122} \langle \ \theta , \ f_*[E_L, \ \partial E_L ] \rangle = \langle \phi_{\theta}, [\mathcal{S}_f] \rangle \in A_G, \ \ \ \ \mathrm{modulo\ the \ integer \ } N. \end{equation}

Furthermore, if the pair $(G,\mathcal{K} )$ is malnormal,
then there are a quandle 3-cocycle $ \phi_{\theta }$ and an $X$-coloring $\mathcal{S}_f $
such that $\langle \ \theta , \ f_*[E_L, \ \partial E_L ] \rangle = \langle \phi_{\theta}, [\mathcal{S}_f] \rangle $ modulo the integer $N $.
\end{thm}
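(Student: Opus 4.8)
The plan is to rerun the comparison diagram \eqref{aa} exactly as in the proof of Theorem \ref{mainthm1}, but now to quantify the failure of $\alpha$ to be a quasi-isomorphism, which by Theorem \ref{tuu333} is forced for torus knots. By the JSJ formula \eqref{13882} it suffices to treat a single Seifert-fibered piece, namely the complement of $T_{m,n}$ in $S^3$ or of $S_{m,n}$ in the solid torus $V$. I would first record the Seifert presentation $\pi_L \cong \langle a,b \mid a^m = b^n\rangle$, the central element $h = a^m = b^n$ generating the center, and the peripheral pair $(\mathfrak{m},\mathfrak{l})$; the crucial arithmetic input is the framing relation $\mathfrak{l} = h\,\mathfrak{m}^{-N}$, where $N=mn$ for $T_{m,n}\subset S^3$ and $N=n$ for $S_{m,n}\subset V$.

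Next I would analyze the induced maps on $H_3$. Since $h$ is central, it acts trivially by conjugation on the link quandle $Q_L$, so the longitude acts on $Q_L$ precisely as $\mathfrak{m}^{-N}$. Consequently the shadow coloring $\mathcal{S}_f$ and the quandle $3$-class $[\mathcal{S}_f]$ are constrained only up to the $N$-fold meridian action, which is exactly the decoration ambiguity of order $N$ recorded in \cite[\S 5]{Zic} for the non-hyperbolic piece. The key claim I would aim to prove is that the non-malnormality of $\pi_L$ is concentrated at the primes dividing $N$: that is, $\alpha$ becomes a quasi-isomorphism after inverting $N$, so a quasi-inverse $\beta$ exists over $\Z[1/N]$ and the integer $N_L$ of \eqref{kaa2} satisfies $N_L \equiv \pm 1 \pmod N$. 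The chain-level identity $\beta\circ\varphi_3([\mathcal{S}_{\mathrm{id}_{Q_L}}]) = N_L[\pi_L,\partial\pi_L]$ then yields \eqref{1122} modulo $N$.

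For the first assertion (arbitrary $\theta$), I would take $\phi_\theta = (\beta\circ\varphi_3)^* f^*\theta$, understood over $\Z[1/N]$, and read off the map $\mathrm{Reg}_D\times\mathrm{Arc}_D\times\mathrm{Arc}_D \to A$ from the definition of $[\mathcal{S}_f]$, exactly as in the non-cable case but with the $N$-torsion discrepancy retained. For the second, malnormal assertion, I would instead invoke Proposition \ref{tukau} to produce a genuine quasi-inverse $\beta'$, set $\phi_\theta = (\beta'\circ f_*\circ\varphi_3)^*\theta$ as an honest quandle $3$-cocycle, and repeat the computation \eqref{bb111} verbatim; functoriality together with Proposition \ref{tukau} then gives the pairing equality up to the same $N$-ambiguity, as required.

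The main obstacle I expect is pinning down the precise value of $N$ and verifying that the discrepancy is exactly this integer rather than merely a divisor or a multiple of it. This amounts to tracking the framing $\mathfrak{l} = h\,\mathfrak{m}^{-N}$ through the chain maps $\varphi_*$ and $\alpha$ on the Seifert piece, and in particular distinguishing the closed case $T_{m,n}\subset S^3$, where both exceptional fibers of the base orbifold contribute so that $N=mn$, from the relative case $S_{m,n}\subset V$, where only the $n$-fold fiber survives and $N=n$. Finally, any reducible configuration would be handled as in the last paragraph of the proof of Theorem \ref{mainthm1}, by attaching an auxiliary hyperbolic link in $D^2\times S^1$ and thereby reducing to the two model computations above.
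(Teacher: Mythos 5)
Your overall strategy (reduce to a single Seifert piece via \eqref{13882}, rerun the diagram \eqref{aa}, and quantify the failure of $\alpha$) points in the right direction, but the central claim is backwards. You assert that $\alpha$ becomes a quasi-isomorphism after \emph{inverting} $N$, i.e.\ over $\Z[1/N]$. This cannot be true: as computed in \S \ref{23dd11123}, $H_3^{\Delta}(K\backslash G;\Z)\cong \Z/nm$ (resp.\ $\Z/m$ for the solid-torus piece), which is entirely $N$-torsion, while $H_3^{\rm gr}(G,K;\Z)\cong H_3(E_L,\partial E_L;\Z)\cong\Z$; after tensoring with $\Z[1/N]$ the former vanishes and the latter does not, so $\alpha$ is the zero map in degree $3$, not an isomorphism. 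The paper does the opposite: it works throughout in the torsion coefficients $\Z/N$, where the passage to the quotient by the center $\langle x^n\rangle$ (which makes the pair $(\mathcal{G},\mathcal{K})$ malnormal, by \cite{HWO}) and the Mayer--Vietoris computation for $\Z/n * \Z/m$ show that $\alpha$ induces an isomorphism on $H_*(\,\cdot\,;\Z/N)$. This is also why the statement is ``modulo $N$'' and why Corollary \ref{3gi} says the prime-to-$N$ part of the pairing vanishes --- the fundamental class is forced through a cyclic group of order $N$, which is exactly the information your localization would discard.

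The second, independent gap is that you never establish the surjectivity of the Inoue--Kabaya map $\varphi_*$ on $H_3(\,\cdot\,;\Z/N)$, which is the actual crux of the paper's argument: without it, the chain-level identity $\beta\circ\varphi_3([\mathcal{S}_{\mathrm{id}}])=N_L[\pi_L,\partial\pi_L]$ gives no control on $N_L$ modulo $N$, and ``tracking the framing $\mathfrak{l}=h\,\mathfrak{m}^{-N}$ through the chain maps'' is not a substitute. The paper proves this surjectivity prime by prime: for each $p$ dividing $mn$ it constructs a finite quotient $f:\Z/m*\Z/n\to (\Z/p)^k\rtimes\Z/(1+p^k)$ injective on $H_3(\,\cdot\,;\Z/p)$, identifies the resulting quandle as an Alexander quandle $x\lhd y=\omega(x-y)+y$ on $\mathbb{F}_{p^k}$, and invokes the injectivity results of \cite[Lemmas 4.5--4.6]{Nos3} for $\varphi_*$ on such quandles. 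Your appeal to the ``decoration ambiguity of order $N$'' from \cite[\S 5]{Zic} is a red herring here; the modulus $N$ arises from the homology of the coset space being $\Z/N$, not from a choice of decoration.
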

In conclusion, we have a diagrammatic computation for cable knots, 
although the statement is considered modulo some integer.
In addition, we later see that the discussion to show \eqref{1122} is reduced to the homology of cyclic groups;
hence, the pairing modulo $N$ does not have more information than cyclic groups.


\subsection{Observations on the torus knots. }\label{23dd11123}

Before going to the proof, we show two lemmas, and observe an essential reason why the statement is considered modulo $N$.

Consider the $(n,m)$-torus knot $L$ in the 3-sphere $S^3$. Let $G$ be $\pi_1(S^3 \setminus L)$, and let $K$ be the pherihedral subgroup.
Fix $(a,b,n,m)\in \Z^4$ with $an +bm=1.$
According to \cite[\S 2]{HWO}, we mention these group presentations
$$ \pi_1(S^3 \setminus T_{n,m}) \cong \langle \ x,y, \ | \ x^n =y^m \ \rangle \ \supset \ \langle x^a y^b , \ (x^a y^b )^{-nm } x^n \rangle=K . $$
Then, Theorem \ref{tuu333} says that the pair $ (G,K) $ is not malnormal.
However, regarding the center $Z = \langle x^n \rangle \subset G$,
the quotients $ G/Z$ and $K/Z$ are isomorphic to the free product $\Z/n * \Z/m$
and $\Z$, respectively.
Denote the quotients $ G/Z$ and $K/Z$ by $\mathcal{G}$ and $\mathcal{K}$, respectively.
Then, it is known \cite[\S 2]{HWO} that the pair $ (\mathcal{G},\mathcal{K})$ is malnormal.
We will show the following:
\begin{lem}\label{h3ai} For $* \geq 2$, there are isomorphisms
$$H_*^{\Delta}(K \backslash G;\Z ) \cong H_*^{\Delta}(\mathcal{K} \backslash \mathcal{G};\Z ) \cong H_*^{\rm gr}( \mathcal{G}, \mathcal{K};\Z) \cong \left\{\begin{array}{ll}
\Z , &\quad \mathrm{if \ \ } *=2, \\
\Z/nm , &\quad \mathrm{if \ \ } * {\rm \ is \ odd, \ and \ }*\geq 3, \\
0, & \mathrm{otherwise}.
\end{array}
\right. $$
\end{lem}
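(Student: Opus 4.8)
The plan is to establish the three isomorphisms in the chain separately, since they are of rather different natures. The middle isomorphism $H_*^{\Delta}(\mathcal{K} \backslash \mathcal{G};\Z ) \cong H_*^{\rm gr}( \mathcal{G}, \mathcal{K};\Z)$ is the easiest: because $(\mathcal{G},\mathcal{K})$ is malnormal (as recalled just above the statement, citing \cite[\S 2]{HWO}) and the quotient set $\mathcal{K} \backslash \mathcal{G}$ is infinite (as $\mathcal{G} = \Z/n * \Z/m$ is infinite for the nontrivial torus knots, while $\mathcal{K} \cong \Z$), Proposition \ref{tukau} applies directly and gives the isomorphism for all $*$. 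So the work concentrates on the first and last isomorphisms.

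For the last isomorphism, I would compute $H_*^{\rm gr}(\mathcal{G},\mathcal{K};\Z)$ via the long exact sequence \eqref{g666} of the pair, which here reads
\begin{equation*}
\cdots \ra H_{*}^{\rm gr}(\mathcal{K};\Z) \xrightarrow{(\iota)_*} H_*^{\rm gr}(\mathcal{G};\Z) \lra H_*^{\rm gr}(\mathcal{G},\mathcal{K};\Z) \lra H_{*-1}^{\rm gr}(\mathcal{K};\Z) \ra \cdots .
\end{equation*}
Since $\mathcal{K} \cong \Z$, its group homology is $\Z$ in degrees $0,1$ and vanishes above, so for $* \geq 3$ the boundary map yields $H_*^{\rm gr}(\mathcal{G},\mathcal{K};\Z) \cong H_*^{\rm gr}(\mathcal{G};\Z)$, while the degree-$2$ and degree-$1$ part requires tracking the low-degree tail. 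The free product $\mathcal{G} = \Z/n * \Z/m$ has, by the Mayer--Vietoris sequence for free products (or the standard computation of the homology of a free product), $H_*^{\rm gr}(\mathcal{G};\Z) \cong H_*^{\rm gr}(\Z/n;\Z) \oplus H_*^{\rm gr}(\Z/m;\Z)$ in positive degrees; recalling $H_{\rm odd}(\Z/k;\Z) \cong \Z/k$ and $H_{\rm even}(\Z/k;\Z)=0$ for even degree $\geq 2$, this would give $\Z/n \oplus \Z/m$ in each odd degree $\geq 3$. The apparent discrepancy with the claimed $\Z/nm$ is resolved by the central extension $1 \to Z \to G \to \mathcal{G} \to 1$ and the structure of $K$; I expect the correct bookkeeping to use the Gysin-type sequence of this $\Z$-central extension, which fuses the two cyclic summands into a single $\Z/nm$. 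I would treat the degree-$2$ computation (yielding $\Z$) as the edge case coming from $H_2^{\rm gr}(\mathcal{G};\Z)$ together with the map from $H_1(\mathcal{K})=\Z$.

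For the first isomorphism $H_*^{\Delta}(K \backslash G;\Z ) \cong H_*^{\Delta}(\mathcal{K} \backslash \mathcal{G};\Z )$, I would exploit that the central subgroup $Z = \langle x^n \rangle \cong \Z$ lies inside $K$, so the quotient map $G \to \mathcal{G}$ induces a \emph{bijection} of coset spaces $K \backslash G \xrightarrow{\sim} \mathcal{K} \backslash \mathcal{G}$ as sets. This bijection is $G$-equivariant through the projection $G \to \mathcal{G}$, and one checks it induces an isomorphism of the complexes $C_*^{\Delta}(K\backslash G)_G \cong C_*^{\Delta}(\mathcal{K}\backslash\mathcal{G})_{\mathcal{G}}$ because the coinvariants kill exactly the action of $Z$; this is where I would verify that passing to $G$-coinvariants only sees the $\mathcal{G}$-action. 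The main obstacle I anticipate is precisely the central-extension bookkeeping in the last isomorphism: getting the single cyclic group $\Z/nm$ (rather than $\Z/n \oplus \Z/m$) and the degree-$2$ copy of $\Z$ to come out correctly requires carefully identifying how the generator of $Z$ sits relative to $K$ and how its homology class transfers along the Gysin sequence. The set-theoretic and malnormality steps are routine by comparison; the delicate point is the numerical outcome of the torus-knot central extension.
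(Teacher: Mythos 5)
Your treatment of the first two isomorphisms matches the paper's: the set-theoretic identification $K\backslash G=\mathcal{K}\backslash\mathcal{G}$ (valid because the center $Z=\langle x^n\rangle$ lies in $K$, so the $G$-action on the coset space factors through $\mathcal{G}$ and the coinvariants agree), and Proposition \ref{tukau} applied to the malnormal pair $(\mathcal{G},\mathcal{K})$ with $\mathcal{K}\backslash\mathcal{G}$ infinite. Those parts are fine and are exactly what the paper does.

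The genuine problem is in your handling of the last isomorphism. Your long-exact-sequence computation is correct as far as it goes: since $H_k(\mathcal{K};\Z)=H_k(\Z;\Z)=0$ for $k\geq 2$, the sequence \eqref{g666} gives $H_*^{\rm gr}(\mathcal{G},\mathcal{K};\Z)\cong H_*^{\rm gr}(\mathcal{G};\Z)\cong \Z/n\oplus\Z/m$ for odd $*\geq 3$, vanishing for even $*\geq 4$, and $\Z$ in degree $2$ as the kernel of the map $H_1(\mathcal{K})\cong\Z\ra H_1(\mathcal{G})$ into a finite group. But the ``apparent discrepancy'' you then set out to resolve does not exist: $T_{n,m}$ is a knot only when $\gcd(n,m)=1$ (the paper fixes $an+bm=1$ at the outset of \S \ref{23dd11123}), and then $\Z/n\oplus\Z/m\cong\Z/nm$ by the Chinese remainder theorem. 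Your proposed fix --- a Gysin-type sequence for the central extension $1\to Z\to G\to\mathcal{G}\to 1$ that ``fuses'' the two cyclic summands --- is both unnecessary and unworkable: $H_*^{\rm gr}(\mathcal{G},\mathcal{K};\Z)$ is an invariant of the pair $(\mathcal{G},\mathcal{K})$ alone, and it genuinely equals $\Z/n\oplus\Z/m$ in odd degrees $\geq 3$; no bookkeeping involving $G$ or $Z$ can alter that group, and if such a fusion were actually needed the lemma as stated would be false. The paper's own route is to compute $H_*(\mathcal{G})$ by Mayer--Vietoris, identifying the classifying space with $L^{\infty}_n\vee L^{\infty}_m$, which yields the same $\Z/n\oplus\Z/m$, the identification with $\Z/nm$ again being coprimality. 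So: delete the Gysin digression, invoke $\gcd(n,m)=1$, and your argument closes.
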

\begin{proof} The first one is obtained by the settheoretic equality $ K \backslash G=\mathcal{K} \backslash \mathcal{G}$. The second is done by the malnormality.
We explain the last one: by a Mayer-Vietoris argument, the homology of $\Z/n * \Z/m$ is
that of the pointed sum $ L^{\infty}_n \vee L^{\infty}_m $, where $L^{\infty}_m $ is the infinite dimensional lens space with fundamental group $\Z/m$.
Hence, the long exact sequence \eqref{g666} readily leads to the conclusion.
\end{proof}

In summary, since the proofs in this paper often employ the Hochschild homology $H_*^{\Delta}(K \backslash G )$,
it is reasonable to consider the pairing modulo $nm$.

In addition, we similarly observe another knot $S_{n,m}$ in the solid torus, in details.
Fix four integers $(n,m,a,b)\in \Z^4$ with $an+bm =1$.
Consider the following subspace
$$ \bigl\{ \ \ (z, w) \in \mathbb{C}^2\ \ \bigl|\ \ |z|^2 +|w|^2=1, \ \ \ \ |z^{n} +w^m| < \frac{1}{n^m m^n}, \ \ \ |z|<\frac{1}{3} \ \ \bigr\}. $$
We can easily check that
the space is homeomorphic to the $(n,m)$-torus knot $V\setminus T_{n,m}$.
Since the space is regarded as a restriction of a Milnor fibration over $S^1 $, it is an Eilenberg-MacLane space.
Furthermore,
as in the usual computation of $\pi_1 (S^3 \setminus T_{n,m}) $,
set up the two subsets
$$ U_1:= \bigl\{ \ (z,w) \in V\setminus T_{n,m} \ \bigl| \ \ |z|^2 \leq 1/2 \ \bigr\}, \ \ \ \ \ U_2:= \bigl\{ \ (z,w) \in V\setminus T_{n,m} \ \bigl| \ \ |z|^2 \geq 1/2 \ \bigr\}. $$
Since $U_1 \simeq S^1$ and $U_2 \simeq S^1 \times S^1$, a van-Kampen argument (see \cite[\S 15]{BZH}) can conclude
$$ \pi_1 (V\setminus T_{n,m}) \cong \langle \ x,z' \ | \ x^m z' =z'x^m \ \rangle .$$
Here, the longitude $ \mathfrak{l}$ is represented by $ \mathfrak{m}^{-nm } x^n $, as before. 
In summary, we can easily obtain the following:
\begin{lem}\label{h3ai}
\begin{enumerate}[(i)]
\item
The center of $ \pi_1 (V\setminus T_{n,m})$ is generated by $x^m$ and is isomorphic to $\Z$.
\item The quotient group of $ \pi_1 (V\setminus T_{n,m})$ subject to the center is isomorphic to
$ \Z/m * \Z$.
\item The subgroup generated by the meridian $ [\mathfrak{m}]$ is malnormal, and is isomorphic to
$\Z$.
\end{enumerate}
\end{lem}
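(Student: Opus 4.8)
The plan is to prove Lemma \ref{h3ai} (the final statement on $S_{n,m}$) by a direct van--Kampen and centralizer computation, parallel to the preceding analysis of the torus knot $T_{n,m}$. The group presentation $\pi_1(V \setminus T_{n,m}) \cong \langle x, z' \mid x^m z' = z' x^m \rangle$ is already established in the excerpt; I would take this as the starting point, so the work reduces to three algebraic verifications about this two-generator group $\Gamma$.

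For part (i), I would argue that $x^m$ is central directly from the relation $x^m z' = z' x^m$: since $x$ and $z'$ generate $\Gamma$ and $x^m$ commutes with both generators (trivially with $x$, and with $z'$ by the defining relation), it lies in the center $Z(\Gamma)$. To see that $Z(\Gamma)$ is \emph{exactly} $\langle x^m \rangle$ and is infinite cyclic, I would pass to the quotient $\Gamma/\langle x^m \rangle$. Collapsing $x^m$ yields the presentation $\langle x, z' \mid x^m = 1 \rangle \cong \Z/m * \Z$, which settles part (ii) immediately and also shows $x^m$ has infinite order (a nontrivial center cannot survive in the free product $\Z/m * \Z$, which is centerless for $m \geq 2$). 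Combining these gives $Z(\Gamma) = \langle x^m \rangle \cong \Z$.

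For part (iii), I would identify the meridian $\mathfrak{m}$ with the appropriate generator and show that $\langle \mathfrak{m} \rangle \cong \Z$ is malnormal in $\Gamma$. The cleanest route is again through the quotient: malnormality of a subgroup can be detected after quotienting by a central subgroup contained in it, provided one accounts for the center. Since $\Z/m * \Z$ is a free product, malnormality of cyclic subgroups generated by an element of the free factor $\Z$ (or of a conjugate of the $\Z/m$ factor) follows from the standard normal-form / malnormality theory of free products (cf.\ the treatment of $\mathcal{G} = \Z/n * \Z/m$ in the torus-knot case, Lemma \ref{h3ai} above and \cite{HWO}). I would verify that the image of $\langle \mathfrak{m} \rangle$ lands in such a malnormal cyclic subgroup of $\Z/m * \Z$, and that the central extension $\langle x^m \rangle \to \Gamma \to \Z/m * \Z$ transports this malnormality back up, using that $\langle x^m \rangle$ itself sits inside $\langle \mathfrak{m} \rangle$ (via $\mathfrak{l} = \mathfrak{m}^{-nm} x^n$, so the peripheral structure contains the center).

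The main obstacle will be part (iii): malnormality is not automatically inherited along central extensions, so I must be careful that the intersection $g^{-1} \langle \mathfrak{m} \rangle g \cap \langle \mathfrak{m} \rangle$ is computed correctly in $\Gamma$ rather than merely in the quotient. The subtlety is that two elements conjugate in the quotient $\Z/m * \Z$ differ by a central factor upstairs, so I would need to track the central $\langle x^m \rangle$-component explicitly and confirm that no nontrivial intersection is introduced beyond the central part already inside $\langle \mathfrak{m} \rangle$. Once the interaction between the center and the meridian subgroup is pinned down precisely, the malnormality in the free product quotient lifts cleanly and the lemma follows.
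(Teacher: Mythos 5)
Your handling of (i) and (ii) is correct and is essentially what the paper leaves implicit after the van Kampen computation: $x^m$ commutes with both generators, the quotient by $\langle x^m\rangle$ is visibly $\Z/m * \Z$, and since that free product is centerless for $m\geq 2$ the center is exactly $\langle x^m\rangle$. (One small repair: centerlessness of the quotient does not show that $x^m$ has infinite order; for that, map to the abelianization $\Z^2$, where $x^m\mapsto (m,0)$.)

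Part (iii) is where the proposal breaks down, and the failure is structural rather than a missing detail. In any group $\Gamma$ with nontrivial center $Z$, no subgroup $K$ with $1\neq K\neq \Gamma$ is malnormal in the sense of condition ($\star$): if some $z\in Z$ lies outside $K$, then $z^{-1}Kz\cap K=K\neq 1$ although $z\notin K$; and if $Z\subseteq K$, then $g^{-1}Kg\cap K\supseteq Z\neq 1$ for every $g\notin K$. Since part (i) gives $Z\bigl(\pi_1(V\setminus T_{n,m})\bigr)=\langle x^m\rangle\cong\Z$, the subgroup $\langle\mathfrak{m}\rangle$ cannot be malnormal in $\pi_1(V\setminus T_{n,m})$ itself, so the statement you are trying to transport up the central extension is false as you read it, and no amount of tracking the central component will rescue it. The assertion must be read --- and is used immediately afterwards, in the computation of $H_*^{\rm gr}(\Z/m*\Z,\Z;\Z)$ --- as a statement about the image $[\mathfrak{m}]$ in the central quotient $\Z/m*\Z$, exactly parallel to the preceding discussion of $T_{n,m}\subset S^3$, where malnormality is claimed only for the pair $(\mathcal{G},\mathcal{K})=(G/Z,K/Z)$. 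Your auxiliary claim that $\langle x^m\rangle$ sits inside $\langle\mathfrak{m}\rangle$ is also wrong on its own terms: the relation $\mathfrak{l}=\mathfrak{m}^{-nm}x^n$ places $x^n$ in the peripheral subgroup $\langle\mathfrak{m},\mathfrak{l}\rangle$, not a power of $x^m$ in the cyclic group generated by the meridian alone; and were it true, $[\mathfrak{m}]$ would have finite order in the quotient, contradicting the isomorphism with $\Z$ you are trying to establish. What actually needs proving is only the quotient statement --- $[\mathfrak{m}]$ has infinite order in $\Z/m*\Z$ and $\langle[\mathfrak{m}]\rangle$ is malnormal there --- which is the free-product normal-form argument you already sketch (and which the paper borrows from the parallel computation in \cite{HWO}); the attempted lift back to $\pi_1(V\setminus T_{n,m})$ should simply be dropped.
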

Since $S_{n,m}$ is embedded in $S^3$, we have the link quandle $Q_V$.
As a result, 
we similarly have
$$H_k^{\Delta}(Q_V;\Z ) \cong H_k^{\Delta}(K \backslash G;\Z ) \cong H_k^{\rm gr}( \Z/m * \Z , \Z,\Z ;\Z) \cong \left\{\begin{array}{ll}
\Z , &\quad \mathrm{if \ \ } k=2, \\
\Z/m , &\quad \mathrm{if \ \ } * {\rm \ is \ odd, \ and \ }*\geq 3.\\
0 , &\quad \mathrm{ \ \ } {\rm \ \ otherwise}.
\end{array}
\right. $$

Finally, we mention that
the inclusion $ j:V \setminus T_{n,m} \hookrightarrow S^3 \setminus T_{n,m} $ induces
$H_3^{\Delta} ( Q_V) \ra H_3^{\Delta} (\mathcal{K} \backslash \mathcal{G}) $ as an injection $\Z/ m \ra \Z/mn$.

\subsection{Proof of Theorem \ref{mainthm5}.}\label{229123}
We will give the proof of Theorem \ref{mainthm5}, based on the discussion in \S \ref{23dd11123}.

\begin{proof}
In this proof, we always deal with homology in torsion coefficients $\Z/N.$
Since the latter part from malnormality can be proven in a similar way to Theorem \ref{mainthm1},
we will only show the former statement.

By the above computations of homologies, the chain map $\alpha$ modulo $N$ yields
an isomorphism on $H_*(\bullet; \Z/N )$, which gives a quasi-inverse $\beta: C_*^{\Delta } (Q_V ) \ra C_*^{\rm gr } (\pi_V ,\partial \pi_V) $.
Furthermore, we suppose a quandle homomorphism $ f : Q_{T_{m,n}} \ra X$ with $X= H \backslash G $.

Similarly to \eqref{aa}, we have the following commutative diagram by functoriality.
\begin{equation}\label{aa2233}{\normalsize
\xymatrix{
H_3^R (Q_V; \Z/N) \ar[rr]^{\varphi_* } \ar[d]^{ j_* }& &
H_3^{\Delta } (Q_V ; \Z/N )_{ \pi_V } \ar[d]^{ j_* } \ar@/_1.3pc/[rr]_{\beta} & & H_3^{\rm gr } (\pi_V ,\partial \pi_V ; \Z/N) \ar[d]^{ j_* }\ar[ll]_{\alpha }
\\
H_3^R (Q_{T_{m,n}} ; \Z/N) \ar[rr]^{\varphi_* } \ar[d]^{ f_* } & &
H_3^{\Delta } (Q_{T_{m,n}}; \Z/N )_{ \pi_L } \ar[d]^{ f_* } \ar@/_1.3pc/[rr]_{\beta} & & H_3^{\rm gr } (\pi_{T_{m,n}} ,\partial \pi_{T_{m,n}}; \Z/N ) \ar[d]^{ f_* } \ar[ll]_{\alpha }\\
H_3^R (X; \Z/N ) \ar[rr]^{\varphi_* } & &
H_3^{\Delta } (X ; \Z/N )_G & & H_*^{\rm gr } (G ,K ; \Z/N) \ar[ll]^{\alpha }.
}}\end{equation}
Here, by the above discussion, the middle $j_*$ is reduced to the injection $\Z/ m \ra \Z/ n m $.
Hence, if we show that all the left $ \varphi_*$'s are surjective, then the rest of the proof runs as in the proof of Theorems \ref{mainthm1} and \ref{mainthm133}.

To show the surjectivity, we now set up appropriate $X$ and $f$.
Although there are many choices of such $X$ and $f$ for the proof, this paper relies on some results in \cite{Nos3} as follows:
Take arbitrary prime $p$ which divides $m$.
Choose the minimal $k \in \N$ such that $n$ is not relatively prime to $1+p^k$.
We set up the semi-direct product $G= (\Z/p)^k \rtimes \Z/ (1+p^k)$ and the subgroup $K = \Z/ (1+p^k)$.
Then, we can easily construct a group homomorphism $f: \Z/m *\Z/n\ra G$ that sends the subgroup $\Z$ to $ K $,
and induces the injection $f_* : H_3( \Z/m *\Z/n;\Z/p)\ra H_3(G;\Z/p) $ on homology.
Thus, it is enough for the surjectivity of the left $ \varphi_*$'s to show that the left bottom map $\varphi_* : H_3^R (X; \Z/N ) \ra
H_3^{\Delta } (X ; \Z/N )_G$ is injective. 
However, by noticing that the binary operation on $X= (\Z/p)^k =\mathbb{F}_{p^k}$ is
$x\lhd y =\omega (x-y)+y$ for some $\omega \in \mathbb{F}_{p^k} \setminus \{ 0,1 \}$ 
the injectivity of $X$ is already shown in the previous paper \cite[Lemmas 4.5--4.6]{Nos3}, which studies the chain map $\varphi_*$ for the quandle operation.

As a parallel discussion, 
when we choose any prime $p$ which divides $n$, the same injectivity can be shown. 
To summarize, since such a $p$ is arbitrary, we have shown the surjectivity of the left $ \varphi_*$'s. Hence, we complete the proof.
\end{proof}
Finally, we give a corollary:
\begin{cor}\label{3gi}
Let $L$ 
be the $(m,n)$-torus knot, and let $E_L$ be the complement space.
Let $K $ be a malnormal subgroup of $G$.
Then, for any relative 3-cocycle $\theta$, the $\ell$-torsion part of the pairing $\langle \ \theta , \ f_*[E_L, \ \partial E_L ] \rangle$ is zero.
Here $\ell $ is either the prime number coprime to $nm$ or $\ell=0.$
\end{cor}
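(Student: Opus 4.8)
The plan is to deduce the corollary from the single assertion that the homology class $f_*[E_L,\partial E_L]$ is annihilated by $nm$ in $H_3^{\rm gr}(G,\mathcal{K};\Z)$. Granting this, the conclusion is formal: the Kronecker pairing is additive in its homology slot, so $nm\cdot f_*[E_L,\partial E_L]=0$ forces $nm\cdot\langle\theta,f_*[E_L,\partial E_L]\rangle=0$ in $A_G$, i.e.\ the pairing lies in the $nm$-torsion subgroup of $A_G$. An element of that subgroup has vanishing image in $A_G\otimes\Z_{(\ell)}$ for every prime $\ell$ coprime to $nm$ (there $nm$ is invertible) and vanishing image in $A_G\otimes\Q$; these are exactly the cases $\ell$ coprime to $nm$ and $\ell=0$ of the statement.

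To prove the torsion claim I would first push the representation down to the quotient $\mathcal{G}=\pi_L/Z$. Recall from \S\ref{23dd11123} that the center $Z=\langle x^n\rangle$ of $\pi_L$ is infinite cyclic and lies in the peripheral subgroup $\partial\pi_L$, since $\mathfrak{l}=\mathfrak{m}^{-nm}x^n$ gives $x^n=\mathfrak{m}^{nm}\mathfrak{l}\in\partial\pi_L$. Because $Z$ is central, it is fixed by conjugation and hence contained in every conjugate $\gamma^{-1}(\partial\pi_L)\gamma$; therefore $f(x^n)\in f(\gamma)^{-1}K f(\gamma)$ for all $\gamma\in\pi_L$, while simultaneously $f(x^n)\in f(\partial\pi_L)\subseteq K$. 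If $f(\pi_L)\not\subseteq K$, I choose $\gamma$ with $f(\gamma)\notin K$; the malnormality of $K$ (condition $(\star)$) then gives $f(x^n)\in f(\gamma)^{-1}K f(\gamma)\cap K=\{1_G\}$, so $f(Z)=\{1_G\}$. As $Z$ is central, $f$ factors as $f=\bar f\circ q$, where $q\colon\pi_L\to\mathcal{G}$ is the quotient and $\bar f\colon\mathcal{G}\to G$ carries $\mathcal{K}=\partial\pi_L/Z$ into $K$.

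With this factorization the torsion claim follows from the computation in \S\ref{23dd11123} (Lemma \ref{h3ai}): the induced map $q_*\colon H_3^{\rm gr}(\pi_L,\partial\pi_L;\Z)\cong\Z\to H_3^{\rm gr}(\mathcal{G},\mathcal{K};\Z)\cong\Z/nm$ has finite target, so $q_*[E_L,\partial E_L]$ is killed by $nm$, and hence so is $f_*[E_L,\partial E_L]=\bar f_*\,q_*[E_L,\partial E_L]$. It remains to treat the degenerate case $f(\pi_L)\subseteq K$: there the map of pairs $(\pi_L,\partial\pi_L)\to(G,K)$ factors through $(K,K)$, and since the relative homology $H_3^{\rm gr}(K,K;\Z)$ of the identity vanishes (immediate from the long exact sequence \eqref{g666}, as $(\iota)_*=\mathrm{id}$), we get $f_*[E_L,\partial E_L]=0$ and the pairing is trivially $nm$-torsion. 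I expect the main obstacle to be the middle step, namely verifying carefully that malnormality forces $f(Z)=\{1_G\}$ and hence the factorization through $\Z/n\ast\Z/m$; once the representation is pushed onto $\mathcal{G}$, the torsion statement is a direct consequence of the homology computation already established for torus knots.
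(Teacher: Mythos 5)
Your proposal is correct, and it reaches the conclusion by a genuinely different route from the paper. The paper deduces the corollary as a byproduct of the proof of Theorem \ref{mainthm5}: via the zigzag of chain maps, the class $f_*[E_L,\partial E_L]$ is forced to factor through the Hochschild group $H_3^{\Delta}$ of the coset space of the torus-knot group, which Lemma \ref{h3ai} identifies with $\Z/nm$; malnormality of $K\subset G$ enters only to identify $H_3^{\Delta}(K\backslash G)$ with $H_3^{\rm gr}(G,K)$ via Proposition \ref{tukau}. You instead use malnormality at the level of the representation itself: since the center $Z=\langle x^n\rangle$ lies in $\partial\pi_L$ and is fixed by conjugation, $f(x^n)$ lies in $f(\gamma)^{-1}Kf(\gamma)\cap K$ for every $\gamma$, so condition $(\star)$ forces $f(Z)=\{1_G\}$ as soon as $f(\pi_L)\not\subseteq K$; hence $f$ factors through $\mathcal{G}=\Z/n * \Z/m$ and the class is killed by $nm$ because $H_3^{\rm gr}(\mathcal{G},\mathcal{K};\Z)\cong\Z/nm$ (the same Lemma \ref{h3ai}, but its purely group-homological incarnation, proved by Mayer--Vietoris). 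Your treatment of the degenerate case $f(\pi_L)\subseteq K$ via $H_3^{\rm gr}(K,K)=0$ is a point the paper glosses over entirely. What your approach buys is independence from the quandle and Hochschild machinery --- it is a self-contained group-homology argument, and the observation that malnormality of the target annihilates the center of the torus-knot group is a clean structural fact not made explicit in the paper; what the paper's route buys is consistency with the diagrammatic framework it needs anyway for the computational statement of Theorem \ref{mainthm5}. The one caveat common to both arguments (and not resolved by either) is the decoration-dependence of the pushforward $f_*[E_L,\partial E_L]$ for non-hyperbolic $E_L$, which the paper acknowledges in \S\ref{s13} but does not revisit here.
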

In fact, as seen in the above proof, the fundamental 3-class of the torus knot must factor through $ H_3^{\Delta } (Q_V ; \Z ) \cong \Z/mn$.
As a result, for example, if $\theta$ is the Chern-Simon 3-class as in \eqref{ki2n}, the free (volume) part of the pairing turns out to be zero.

\subsection*{Acknowledgments}
The author is greatly indebted to
Tetsuya Ito and Yuichi Kabaya for
useful discussions on quandle, malnormality, and hyperbolicity.
This work is partially supported by JSPS KAKENHI Grant Number 00646903.

\appendix
\section{Appendix; the third quandle homology of some link quandles. }\label{2343136}

The purpose is to determine the third quandle homology of some link quandles (Theorems \ref{3331} and \ref{333221}).
In what follows, we assume the terminology in \S \ref{s112}, and
we deal with only integral homology (so we often omit writing $\Z$)

{ \large \baselineskip=17pt
For the purpose, we adopt an approach on the basis of \cite[\S 8]{Nos7}.
Thus, we shall review rack spaces from a quandle $X$.
Consider the orbit decomposition $X= \sqcup_{i \in O(X)} X_i$ from the action of $\As(X)$ on $X$. For each orbit $i \in O_X$, we fix $x_i \in X_i$.
Let $Y$ be either $X_i$ or the single point with their discrete topology.
Then, let us consider a disjoint union $ \bigsqcup_{n \geq 0} \bigl( Y \times ([0,1]\times X)^n \bigr), $
with the following two relations:}
\normalsize
\[(y, t_1,x_1, \dots, x_{j-1},0,x_j,t_{j+1},\dots, t_n,x_n) \sim (y, t_1,x_1, \dots t_{j-1},x_{j-1},t_{j+1},x_{j+1},\dots, t_n, x_{n} ), \]
\[ \hspace*{0pc} (y,t_1,x_1, \dots, t_{j-1}, x_{j-1},1,x_j, t_{j+1},x_{j+1} , \dots, t_n,x_n) \sim 
(y \lhd x_{j}, t_1,x_1\tri x_{j}, \dots, t_{j-1}, x_{j-1}\tri x_{j},t_{j+1},x_{j+1} ,\dots, t_n,x_n). \]
\large
\baselineskip=16pt
\noindent
Then, the {\it rack space} $B(X,Y)$ is defined to be the quotient space, which is path connected.
When $Y$ is a single point, we denote it by $BX$ for short.

We will list some properties on the space from \cite{FRS}.
By observing the cellular complexes, the following isomorphisms are known:
\begin{equation}
\label{113} H_n^R(X) \cong H_n(BX;\Z) , \ \ \ \ \ \ \ \ \ H_{n+1}^R(X) \cong \bigoplus_{i\in O_X} H_{n}^R (B(X,X_i);\Z).
\end{equation}
Furthermore, concerning fundamental groups, we mention the following isomorphisms \cite{FRS}:
\begin{equation}
\label{114}\pi_1(BX)\cong \As(X), \ \ \ \ \ \ \ \ \ \pi_1(B(X,X_i)) \cong \mathrm{Stab}(x_i) \subset \As(X). \end{equation}
It is shown \cite[Proposition 5.2]{FRS} that the action of $\pi_1(BX)$ on $\pi_*(BX)$ is trivial, and the projection $ p: B(X,X_i) \ra BX$ is a covering.
Therefore, we have functorially the Postnikov tower written in
\begin{equation}
\label{115}{\normalsize
\xymatrix{
H_3(\mathrm{Stab}(x_i) ) \ar[r] \ar[d]_{ p_* }& \pi_2(B(X,X_i) ) \ar[d]_{ p_* }^{\cong } \ar[r] & H_2(B(X,X_i) ) \ar[d]_{p_* } \ar[r] & H_2^{\rm gr}(\mathrm{Stab}(x_i) ) \ar[d]_{ p_* } \ar[r] & 0 & (\mathrm{exact})
\\
H_3(\pi_1(BX) ) \ar[r] & \pi_2(BX ) \ar[r] & H_2(BX) \ar[r] & H_2^{\rm gr}( \pi_1(BX) ) \ar[r] & 0 & (\mathrm{exact}).
}}\end{equation}
Furthermore, we mention \cite[Theorem 7]{LN}, which claims the isomorphisms
\begin{equation}\label{kiddd2n}H_2^R (X) \cong H_2^Q(X) \oplus \Z^{\oplus O(X)}, \ \ \ \ \ \ \ \ H_3^R (X) \cong H_3^Q( X ) \oplus H_2^Q(X) \oplus \Z^{\oplus O(X) \times O(X)}.
\end{equation}

Using the above results, we will show the following theorem:

\begin{thm}\label{3331}
Let $Q_L$ be the link quandle of a non-trivial knot $L$. Then,
$$ H_3^R( Q_L) \cong \Z \oplus \Z \oplus \Z, \ \ \ \ \ \ H_3^Q( Q_L) \cong \Z.$$
Furthermore, the quandle homology $H_3^Q( Q_L) \cong \Z$ is generated by the fundamental 3-class [$\mathcal{S}_{\mathrm{id}_{Q_L}}$] (recall Example \ref{r11} for the definition).
\end{thm}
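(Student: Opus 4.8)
The plan is to run the rack-space machinery of \eqref{113}--\eqref{kiddd2n} on the single connected quandle $X=Q_L$, for which $O(X)$ is one point. First I would assemble the input: by Joyce's theorem \cite{Joy} one has $\As(Q_L)\cong\pi_L$, so in the notation of \eqref{114} the stabilizer $\mathrm{Stab}(x_i)\cong K_1\cong\Z^2$ is the peripheral subgroup. Since a knot complement $E_L$ is aspherical and deformation retracts onto a $2$-complex, $H^{\rm gr}_2(\pi_L)=H^{\rm gr}_3(\pi_L)=0$, whereas for the boundary torus $H^{\rm gr}_2(\Z^2)\cong\Z$ and $H^{\rm gr}_3(\Z^2)=0$.

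First I would settle degree two: the second isomorphism of \eqref{113} with $n=1$ gives $H_2^R(Q_L)\cong H_1(B(X,X_i))\cong \mathrm{Stab}(x_i)^{\rm ab}\cong\Z^2$, because $H_1$ of a path-connected space is the abelianization of its fundamental group. By the first isomorphism of \eqref{113} this is $H_2(BX)$, and the bottom row of the Postnikov tower \eqref{115} (using $H^{\rm gr}_2(\pi_L)=H^{\rm gr}_3(\pi_L)=0$) makes $\pi_2(BX)\cong H_2(BX)\cong\Z^2$. Next, for degree three, the top row of \eqref{115} is the sequence $0\to\pi_2(B(X,X_i))\to H_2(B(X,X_i))\to H^{\rm gr}_2(\Z^2)\to0$ (using $H^{\rm gr}_3(\Z^2)=0$); it splits since $H^{\rm gr}_2(\Z^2)\cong\Z$ is free, and with the covering isomorphism $\pi_2(B(X,X_i))\cong\pi_2(BX)\cong\Z^2$ I obtain $H_2(B(X,X_i))\cong\Z^3$, hence $H_3^R(Q_L)\cong\Z^3$ by the second isomorphism of \eqref{113} with $n=2$. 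Feeding these into \eqref{kiddd2n}, the identity $H_2^R(Q_L)\cong H_2^Q(Q_L)\oplus\Z\cong\Z^2$ yields $H_2^Q(Q_L)\cong\Z$, and then $\Z^3\cong H_3^R(Q_L)\cong H_3^Q(Q_L)\oplus H_2^Q(Q_L)\oplus\Z\cong H_3^Q(Q_L)\oplus\Z^2$ forces $H_3^Q(Q_L)\cong\Z$.

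For the final assertion I would identify $[\mathcal{S}_{\mathrm{id}_{Q_L}}]$ with the fundamental class of the complement. When $L$ is a prime knot that is neither a cable nor a torus knot, Theorem \ref{clA1ee22} makes $\varphi_3\colon H_3^Q(Q_L)\to H_3^{\Delta}(Q_L)$ an isomorphism; composing with $\alpha$ identifies the target with $H_3^{\rm gr}(\pi_L,\partial\pi_L)\cong H_3(E_L,\partial E_L)\cong\Z$, and the $N_L=\pm1$ computation of \S\ref{2363344} shows that $[\mathcal{S}_{\mathrm{id}_{Q_L}}]$ goes to the generator $[E_L,\partial E_L]$. For the remaining knot types I would instead use the top-row quotient $H_3^R(Q_L)\cong H_2(B(X,X_i))\twoheadrightarrow H^{\rm gr}_2(\mathrm{Stab}(x_i))\cong H_2(\partial E_L)\cong\Z$, checking (via the compatibility of \eqref{kiddd2n} with \eqref{115}) that it restricts to an isomorphism on the $H_3^Q$-summand and sends $[\mathcal{S}_{\mathrm{id}_{Q_L}}]$ to $\partial[E_L,\partial E_L]=[\partial E_L]$, a generator of $H_2(\partial E_L)$.

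The hard part will be precisely this generator statement for the non-generic knots---composite, cable, and torus knots. There $(\pi_L,\partial\pi_L)$ is not malnormal and $H_3^{\Delta}(Q_L)$ is torsion (it is $\Z/mn$ for the $(m,n)$-torus knot, by \S\ref{23dd11123}), so $\varphi_3$ cannot be injective on the free group $H_3^Q(Q_L)\cong\Z$ and the cocycle-pairing argument only pins $[\mathcal{S}_{\mathrm{id}_{Q_L}}]$ down modulo torsion. Showing that it is primitive rather than merely nonzero will require making the homotopy-theoretic identification of the fundamental $3$-class with the peripheral-torus generator in \eqref{115} precise, together with the compatibility of the two splittings of $H_3^R(Q_L)$; that compatibility is the step I expect to demand the most care.
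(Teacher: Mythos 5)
Your computation of the groups themselves is correct and follows the paper's route exactly: $H_2^R(Q_L)\cong H_1(B(X,X_i))\cong\Z^2$, the bottom row of \eqref{115} giving $\pi_2(BQ_L)\cong\Z^2$, the top row giving $H_2(B(X,X_i))\cong\Z^3$, hence $H_3^R(Q_L)\cong\Z^3$ via \eqref{113}, and then \eqref{kiddd2n} to extract $H_3^Q(Q_L)\cong\Z$. No issue there.

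The gap is in the generator statement, and it is twofold. First, your argument genuinely fails to cover composite, cable, and torus knots --- you say so yourself --- yet these are precisely the knots that Theorem \ref{3331} must handle and the main theorems do not; for them $(\pi_L,\partial\pi_L)$ is not malnormal, $H_3^{\Delta}(Q_L)$ is finite, and no pairing with relative group cocycles can detect primitivity of $[\mathcal{S}_{\mathrm{id}_{Q_L}}]$ in a free group. Second, even in the ``generic'' case your appeal to Theorem \ref{clA1ee22} and to the $N_L=\pm1$ computation of \S\ref{2363344} is circular as the paper is organized: the bijectivity of $\varphi_3$ in Theorem \ref{clA1ee22} presupposes that $H_3^Q(Q_L)\cong\Z$ is generated by $[\mathcal{S}_{\mathrm{id}_{Q_L}}]$, and the non-hyperbolic instances of $N_L=\pm1$ are proved via Lemma \ref{4321}, which itself rests on the appendix. (Only the strictly hyperbolic case, where $N_L=\pm1$ comes from the external identity \eqref{kaa} of \cite{IK}, escapes this circle.) The missing idea is the paper's uniform and much shorter argument: the degree-lowering chain map $q\colon C_n^R(X)\to C_{n-1}^R(X)$, $(x_1,\dots,x_n)\mapsto(x_2,\dots,x_n)$, realizes on homology the covering projection $p_*$ implicit in \eqref{113}, and it is known from \cite{FRS,RS} that the $2$-class $q_*[\mathcal{S}_{\mathrm{id}_{Q_L}}]$ already generates $H_2^Q(Q_L)\cong\Z$; compatibility with the splitting \eqref{kiddd2n} then forces $[\mathcal{S}_{\mathrm{id}_{Q_L}}]$ to generate $H_3^Q(Q_L)$ for every non-trivial knot at once, with no case division and no reliance on Sections 5--6. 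The ``compatibility of the two splittings'' you flag as the delicate step is exactly what this $q_*$ device supplies, so your proposal stops short of the one ingredient that makes the theorem go through.
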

\begin{proof}
Note $O(X)=1$. Then, $ \pi_1(B(X,X)) \cong \mathrm{Stab}(x_0)$ is the peripheral group $ \cong \Z^2$.
Hence, $H_*^{\rm gr}(\mathrm{Stab}(x_i) ) \cong H_*( S^1 \times S^1;\Z )$.
Furthermore, $ \pi_2(BX ) \cong \Z^2$ is known \cite{FRS}.
Hence, the above sequence deduces $H_3^R( Q_L) \cong \Z^3$ as desired.
Furthermore, since $H_2^R( Q_L)\cong H_1(B(Q_L,Q_L))\cong H_1^{\rm gr}(\mathrm{Stab}(x_i) ) \cong \Z ^2$ (which is already known \cite{RS,FRS}), we have $H_2^Q( Q_L) \cong \Z $ by \eqref{114}.
Hence, we obviously obtain $H_3^Q( Q_L) \cong \Z$ from \eqref{kiddd2n}.

We complete the remaining proof on the generator.
Consider a chain map $ q: C_n^R(X) \ra C_{n-1}^R(X) $ induced by $ (x_1,\dots, x_n) \mapsto (x_2,\dots, x_n )$.
We can easily check that the map on the homology level coincides with the above $p_* : H_n(BX) \ra H_{n-1}(BX) $.
Here note the fact \cite{FRS,RS} that $H_2^Q(Q_L) \cong \Z $ is generated by the 2-class $q_*[\mathcal{S}_{\mathrm{id}_{Q_L}}] $.
Hence, $H_3^Q( Q_L) $ must be generated by the 3-class [$\mathcal{S}_{\mathrm{id}_{Q_L}}$].
\end{proof}
Next, we will deal with the link case. As seen in Remark \ref{ssss} or Seifert pieces, it is complicated to deal with $H_3^Q(Q_L)$ of every links.
Thus, we assume a property:
$$ \textrm{ The centralizer subgroup of each pherihedra group }\mathcal{P}_i \subset \pi_L \textrm{ is equal to } \mathcal{P}_i.\ \ \ \ \ \ \ \ \ (\dagger).$$
For example, if $L$ is hyperbolic or if the subgroups $ \mathcal{P}_i $ are malnormal, the property holds.
\begin{thm}\label{333221}
Let $L \subset S^3 $ be a non-splitting link with the property ($\dagger$). Then,
$$ H_3^R( Q_L) \cong \Z^{\# L(\# L +3 )}, \ \ \ H_3^Q( Q_L) \cong \Z^{\# L}.$$
\end{thm}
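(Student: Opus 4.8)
The plan is to run the argument of Theorem~\ref{3331} componentwise, tracking all $\#L$ orbits of the action $Q_L \curvearrowleft \As(Q_L)$ rather than the single orbit available for a knot. Throughout I would use the four inputs recalled above: the cellular isomorphisms \eqref{113}, the fundamental-group identifications \eqref{114}, the Postnikov/Hopf sequences \eqref{115}, and the splitting \eqref{kiddd2n}. The point of the non-splitting hypothesis and of the property $(\dagger)$ is precisely to render every term in these sequences computable.

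First I would fix the orbit data. Since the meridians of distinct components lie in distinct classes of $H_1(E_L)$, the orbit set satisfies $O(Q_L)=\#L$, with one basepoint $x_i$ per component. The key structural point is that the property $(\dagger)$ forces $\mathrm{Stab}(x_i)$ to coincide with the peripheral group $\mathcal{P}_i\cong\Z^2$: the stabilizer of a meridian coset is a centralizer-type subgroup, and $(\dagger)$ prevents it from being larger (this is exactly what can fail on Seifert pieces, cf.\ Remark~\ref{ssss}). Hence $\pi_1(B(Q_L,X_i))\cong\Z^2$ by \eqref{114}, so $H_1(B(Q_L,X_i))\cong\Z^2$, and the second isomorphism in \eqref{113} gives $H_2^R(Q_L)\cong\bigoplus_{i=1}^{\#L}\Z^2\cong\Z^{2\#L}$. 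Feeding this into \eqref{kiddd2n} isolates $H_2^Q(Q_L)\cong\Z^{\#L}$.

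The crux is the computation of $\pi_2(BQ_L)$, which controls $H_2(B(Q_L,X_i))$ and hence $H_3^R$. Here I would use \eqref{114} to identify $\pi_1(BQ_L)\cong\As(Q_L)\cong\pi_L$, and the non-splitting hypothesis to guarantee that $E_L$ is aspherical, so that $H_*^{\rm gr}(\pi_L)\cong H_*(E_L)$. A standard Euler-characteristic (or Alexander-duality) count then yields $H_2(E_L)\cong\Z^{\#L-1}$ and $H_3(E_L)=0$. Plugging these into the bottom row of \eqref{115} makes the map from $H_3(\pi_L)$ vanish and the map onto $H_2^{\rm gr}(\pi_L)$ surjective, so $\pi_2(BQ_L)\cong\ker(H_2^R(Q_L)\to H_2^{\rm gr}(\pi_L))\cong\Z^{\#L+1}$. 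Because $p\colon B(Q_L,X_i)\to BQ_L$ is a covering, $\pi_2(B(Q_L,X_i))\cong\pi_2(BQ_L)$, and since $\mathrm{Stab}(x_i)\cong\Z^2$ the top row of \eqref{115} collapses to a split extension $0\to\pi_2(BQ_L)\to H_2(B(Q_L,X_i))\to\Z\to 0$, giving $H_2(B(Q_L,X_i))\cong\Z^{\#L+2}$ for every $i$.

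Finally I would assemble via the second isomorphism in \eqref{113}: $H_3^R(Q_L)\cong\bigoplus_{i=1}^{\#L}H_2(B(Q_L,X_i))$, a direct sum of $\#L$ copies of $\Z^{\#L+2}$; then \eqref{kiddd2n}, together with $H_2^Q(Q_L)\cong\Z^{\#L}$ and $O(Q_L)=\#L$, isolates $H_3^Q(Q_L)\cong\Z^{\#L}$, which is the asserted value. I expect the main obstacle to be the $\pi_2$ computation: one must confirm the identification $\As(Q_L)\cong\pi_L$ and the asphericity of $E_L$ (both of which use non-splitting), and must check that the sequence in \eqref{115} behaves as needed, in particular that the edge map onto $H_2^{\rm gr}(\pi_L)$ is onto and that no torsion intervenes, so that each group is free of the stated rank. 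The identification $\mathrm{Stab}(x_i)\cong\mathcal{P}_i$ through $(\dagger)$ is the other delicate point, being the sole place where the centralizer hypothesis is used. I would also double-check the final rank bookkeeping for $H_3^R$, since summing the per-component contributions gives $\Z^{\#L(\#L+2)}$, which must be matched against the displayed exponent.
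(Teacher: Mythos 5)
Your proposal follows the same route as the paper's own proof --- the same four inputs \eqref{113}, \eqref{114}, \eqref{115}, \eqref{kiddd2n}, the identification $\mathrm{Stab}(x_i)\cong\mathcal{P}_i\cong\Z^2$ from $(\dagger)$, the count $|O(Q_L)|=\#L$, and asphericity of $E_L$ from non-splitting to get $H_2^{\rm gr}(\pi_L)\cong\Z^{\#L-1}$, $H_3^{\rm gr}(\pi_L)=0$, hence $\pi_2(BQ_L)\cong\Z^{\#L+1}$ --- and every step you carry out is correct, including $H_2(B(Q_L,X_i))\cong\Z^{\#L+2}$ from the top row of \eqref{115}.

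The bookkeeping discrepancy you flag at the end is genuine, and you should not try to force your answer to match the displayed exponent. Your count $H_3^R(Q_L)\cong\bigoplus_{i=1}^{\#L}H_2(B(Q_L,X_i))\cong\Z^{\#L(\#L+2)}$ is the one the argument yields, and it is the only value compatible with the theorem's second assertion: \eqref{kiddd2n} gives $H_3^R\cong H_3^Q\oplus H_2^Q\oplus\Z^{\#L^2}$ with $H_2^Q\cong\Z^{\#L}$, so $H_3^Q\cong\Z^{\#L(\#L+2)-\#L-\#L^2}=\Z^{\#L}$ as claimed, whereas the printed $\Z^{\#L(\#L+3)}$ would force $H_3^Q\cong\Z^{2\#L}$. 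Your value also specializes correctly to Theorem \ref{3331} at $\#L=1$ (giving $\Z^3$, where $\#L(\#L+3)$ would give $\Z^4$). The paper's own proof is faulty at precisely this step: it asserts $H_2(B(X,X_i))\cong\Z^3$, which is the knot-case value coming from $\pi_2\cong\Z^2$ and is incompatible with the already-established $\pi_2(BQ_L)\cong\Z^{\#L+1}$; and the stated exponent $\#L(\#L+3)$ follows neither from $\Z^3$ (which would give $\Z^{3\#L}$) nor from the correct $\Z^{\#L+2}$. In short, your derivation is the correct, internally consistent version, and the first displayed isomorphism of the theorem should read $\Z^{\#L(\#L+2)}$.
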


\begin{proof}
First, we show $ H_2^R( Q_L ) \cong \Z^{ 2 \# L }$.
By ($\dagger$), we can easily verify $ \mathrm{Stab}(x_i) \cong \Z^2 $ and $|O(Q_L)|=\# L$.
Hence, it follows from \eqref{113} that $H_2^R( Q_L ) \cong \oplus_{i \in O(X)} H_1^{\rm gr}(\mathrm{Stab}(x_i) ) \cong \Z^{ 2 \# L}$ as desired.

Next, we show $ \pi_2(BX ) \cong \Z^{ \# L +1} $.
Since $L$ is non-splitting, $S^3 \setminus L$ is an Eilenberg-MacLane space of $\pi_L $; see, e.g., \cite{AFW}.
Accordingly, $ H_*^{\rm gr}(\As(Q_L)) \cong H_*(S^3 \setminus L) $.
Thus, $ H_3^{\rm gr}(\As(Q_L)) \cong 0$ and $ H_2^{\rm gr}(\As(Q_L)) \cong \Z^{\# L -1 }$.
By \eqref{115} and $ H_2(B Q_L ) \cong \Z^{ 2 \# L }$, we obviously have $ \pi_2(BQ_L ) \cong \Z^{ \# L +1}$.

Next, we will complete the proof.
Since $ H_2(B Q_L ) \cong \Z^{ 2 \# L}$ and $ \mathrm{Stab}(x_i) \cong \Z^2 $, we have $H_2(B (X,X_i) ) \cong \Z^3$ by \eqref{115}.
Hence, we have $H_3^R( Q_L) \cong \Z^{\# L(\# L +3 )}$ by \eqref{113} with $n=2$.
Furthermore, regarding $H_3^Q( Q_L)$, the proof is readily due to \eqref{kiddd2n}.
\end{proof}

Finally, we prove Lemma \ref{4321} which is used in \S \ref{s33}.
In what follows, we use the notation in \S \ref{s33}. More precisely, we should recall the associated quandles $Q_L$ and $ Q_{L'}$ of links $L$ and $L'$, respectively,
and employ the homogenous quandle of the form $ Q_W =( K_1 \backslash \pi_{L})\sqcup (K_2 \backslash \pi_{L}) $,
together with the injections $i_*: Q_L \ra Q_W$ and $j_*: Q_{L'} \ra Q_W$; see \S \ref{s33} for the details. Furthermore, we assume $L \subset L' $ and the hyperbolicity of $S^3 \setminus L \setminus ( S^3 \setminus L'). $ 

\begin{lem}\label{4321} Assume that $L'$ is a hyperbolic link.
Then, $H_3^Q( Q_W) \cong \Z^2$. Furthermore, the induced maps on the third homology level
$$i_*^Q: H_3^Q(Q_L) \lra H_3^Q(Q_W), \ \ \ \mathrm{and} \ \ \ j_*^Q: H_3^Q (Q_{L'}) \lra H_3^Q(Q_W)$$
are given by $(1,0)$ and $ {\small \left(
\begin{array}{cc}
1 & 1 \\
1 & 1 \end{array}
\right) }$, respectively.
\end{lem}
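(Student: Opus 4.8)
The plan is to compute $H_3^Q(Q_W)$ by the rack-space machinery used in the proofs of Theorems \ref{3331} and \ref{333221}, and then to read off the two induced maps by functoriality together with the identification of the generators as fundamental $3$-classes. Since $Q_W=(K_1\backslash\pi_L)\sqcup(K_2\backslash\pi_L)$ has exactly two orbits, $|O(Q_W)|=2$; the whole computation is then governed by the associated group $\As(Q_W)$ and the two stabilizers $\mathrm{Stab}(x_i)\subset\As(Q_W)$ appearing in \eqref{114}--\eqref{115}.

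First I would determine these data. The quandle surjection $j_*$ gives $\pi_{L'}=\As(Q_{L'})\twoheadrightarrow\As(Q_W)$, while sending each generator to its meridian gives $\As(Q_W)\twoheadrightarrow\pi_L$; in particular $\As(Q_W)^{\mathrm{ab}}\cong H_1^R(Q_W)\cong\Z^2$, so $\As(Q_W)$ is a proper quotient of $\pi_{L'}$ lying over $\pi_L$. Using the hyperbolicity of $L'$ to control the group homology entering \eqref{115}, one computes the relevant low-degree homology of $\As(Q_W)$ and of the stabilizers $\mathrm{Stab}(x_i)$, and then runs \eqref{113}--\eqref{115} exactly as in the proof of Theorem \ref{333221}. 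The splitting \eqref{kiddd2n} with $|O(Q_W)|=2$ finally extracts $H_3^Q(Q_W)\cong\Z^2$, proving the first assertion.

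For the two maps I would work with the shadow $3$-class generators. By Theorem \ref{3331} the class $[\mathcal{S}_{\mathrm{id}_{Q_L}}]$ generates $H_3^Q(Q_L)\cong\Z$, and the shadow classes with base colour chosen in each of the two orbits give a basis of $H_3^Q(Q_{L'})$ and a basis $u_1,u_2$ of $H_3^Q(Q_W)$. Since $i_*$ embeds $Q_L$ bijectively as the first orbit of $Q_W$, functoriality of the orbit-decomposition in \eqref{113} identifies $i_*^Q$ with the inclusion of the orbit-$1$ summand, and checking that $i_*^Q[\mathcal{S}_{\mathrm{id}_{Q_L}}]$ is primitive gives $i_*^Q=(1,0)$. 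For $j_*^Q$ the decisive feature is that $j_*$ is induced by the filling homomorphism $\pi_{L'}\to\pi_L$, whose kernel is the normal closure of the $\partial V_1$-curve; tracking the two shadow generators through the functorial sequences shows that this relation identifies the orbit-$2$ contribution with the orbit-$1$ contribution, so that $j_*^Q$ has rank one and carries both basis classes to the common primitive class $u_1+u_2$.

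The hard part will be this last step — proving that $j_*^Q$ is exactly $\bigl(\begin{smallmatrix}1&1\\1&1\end{smallmatrix}\bigr)$ rather than merely of rank one. This is genuinely non-formal, because the orbit decomposition of $H_3^R$ in \eqref{113} does not split $H_3^Q$, and because $\As(Q_W)$ is a proper quotient of $\pi_{L'}$ whose kernel over $\pi_L$ enters the stabilizers $\mathrm{Stab}(x_i)$. I expect the cleanest route is to verify the map after applying $\varphi_3$, where the targets collapse to the one-dimensional relative classes $H_3^\Delta\cong\Z$ of the hyperbolic pieces (Examples \ref{tuu} and \ref{rei322}); the two shadow generators then have equal image there, and lifting this rank-one conclusion back through the splitting \eqref{kiddd2n} should pin down $j_*^Q$ on the nose.
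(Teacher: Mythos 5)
Your strategy for the first assertion (two orbits, the rack--space machinery \eqref{113}--\eqref{115}, and the splitting \eqref{kiddd2n}) is the same as the paper's, but you defer exactly the computation that carries the content. The paper gets the stabilizers by writing $\pi_L$ as the amalgam $\pi_1(S^3 \setminus V_1) *_{\pi_1(\partial V_1)} \pi_1(V_1\setminus L)$ and using hyperbolicity of $V_1\setminus L$ (plus the fact that $S^3\setminus V_1$ is a knot complement) to show that the centralizer of $\pi_1(\partial V_1)$ in $\pi_L$ is $\Z^2$; this gives $\mathrm{Stab}(x_i)\cong \Z^2$ for both orbits, hence $H_2^R(Q_W)\cong\Z^4$, and then the argument of Theorem \ref{333221} applies verbatim. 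Your ``one computes the relevant low-degree homology of $\As(Q_W)$ and of the stabilizers'' is a placeholder for precisely this step. Moreover, your structural claim that $\As(Q_W)$ is a proper quotient of $\pi_{L'}$ is not correct: $K_1$, $K_2$ and the whole image of $\pi_{L'}$ lie in the amalgamated factor $\pi_1(V_1\setminus L)\subsetneq \pi_L$, so the quandle map $j_*:Q_{L'}\to Q_W=K_1\backslash\pi_L\sqcup K_2\backslash\pi_L$ is not even surjective and cannot induce a surjection of associated groups.

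The more serious gap is in your determination of $j_*^Q$. Post-composing with $\varphi_3$ into $H_3^{\Delta}(Q_W;\Z)\cong\Z$ records only one linear functional applied to the $2\times 2$ matrix: it can show that the two generators of $H_3^Q(Q_{L'})$ have equal image under that functional, but it cannot distinguish $\bigl(\begin{smallmatrix}1&1\\1&1\end{smallmatrix}\bigr)$ from, say, $\bigl(\begin{smallmatrix}1&1\\0&0\end{smallmatrix}\bigr)$ or $\bigl(\begin{smallmatrix}2&2\\-1&-1\end{smallmatrix}\bigr)$. ``Lifting back through \eqref{kiddd2n}'' is not an argument here, since \eqref{kiddd2n} is an abstract splitting with no stated compatibility with $\varphi_3$ or with $j_*$. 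The paper pins the matrix down at the level of $H_2$ instead: by \eqref{115} each generator of $H_3^Q(Q_W)$ is detected through $H_2(B(Q_W,X_i))$, which surjects onto $H_2^{\rm gr}(\mathrm{Stab}(x_i))\cong H_2(T^2)\cong\Z$, and the entries are then read off from the boundary inclusions of the peripheral tori by functoriality. To close your argument you would need to carry out this torus-tracking (or some other computation that genuinely separates the two orbit summands of $H_3^Q(Q_W)$), not a computation factored through a rank-one target.
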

\begin{proof} 
By the form of $Q_W$, we can easily show $\As(Q_W) \cong \pi_1(S^3 \setminus L)$.
Further, notice the amalgamation
$$\pi_L = \pi_1(S^3 \setminus V_1) *_{\pi_1( \partial (S^3 \setminus V_1) )} \pi_1(V_1\setminus L).$$
Since $V_1\setminus L$ is hyperbolic, and $S^3 \setminus V_1$ is a knot,
the centralizer of $\pi_1( \partial (S^3 \setminus V_1) )$ in $\pi_L$ is itself isomorphic to $\Z^2.$ Thus, $ \mathrm{Stab}(x_i) \cong \Z^2 $ and $|O(X)|=2 $.
Therefore, the isomorphism \eqref{113} readily implies
$ H_2^R( Q_W ) \cong \Z^{ 4 }.$
Then, by the same discussion of Theorem \ref{333221}, we can show $H_3^Q( Q_W) \cong \Z^2$ .

Furthermore, by the above proofs, we already know the basis of second quandle homologies, as the stabilizer subgroups. Thus, by functoriality from boundary inclusions, 
we can check the matrix presentations. 
\end{proof}

\vskip 1pc

\normalsize

\normalsize
DEPARTMENT OF MATHEMATICS TOKYO INSTITUTE OF TECHNOLOGY
2-12-1
OOKAYAMA
, MEGURO-KU TOKYO
152-8551 JAPAN

\

E-mail address: {\tt nosaka@math.titech.ac.jp}


\begin{thebibliography}{99}

\normalsize
\ifx\undefined\bysame
\newcommand{\bysame}{\leavevmode\hbox to3em{\hrulefill}\,}
\fi
\bibitem[Agol]{Agol} I. Agol, {\it The virtual Haken conjecture}. With an appendix by Agol, Daniel Groves, and Jason Manning
, Doc. Math. {\bf 18} (2013), 1045--1087.
\bibitem[AFW]{AFW} M. Aschenbrenner, S. Friedl, and H. Wilton, {\it 3-manifold groups}, European Mathematical Society, 2015.

\bibitem[AS]{NM}
Jos'{e} Antonio Arciniega-Nevarez and Jose Luis Cisneros-Molina, {\it Invariants of hyperbolic 3-manifolds in relative group homology}, available at arXiv:1303.2986.

\bibitem[Bro]{Bro} K. S. Brown, {\it Cohomology of Groups}, 
Graduate Texts in Mathematics, {\bf 87}, Springer-Verlag, New York, 1994.

\bibitem[Bud]{Bud}
R. Budney,
{\it $JSJ$-decompositions of knot and link complements in $S^3$}, Enseign. Math. (2)
{\bf 52} (2006), no. 3-4, 319--359. 

\bibitem[BZ]{BZH}
G. Burde and H. Zieschang, {\it Knots}, de Gruyter Studies in Mathematics vol.
{\bf 5}, Berlin, 1985



\bibitem[CJKLS]{CJKLS} 
J. S. Carter, D. Jelsovsky, S. Kamada, L. Langford, M. Saito, {\it Quandle cohomology and state-sum invariants of knotted curves and surfaces}, 
Trans. Amer. Math. Soc. {\bf 355} (2003) 3947--3989. 


\bibitem[CKS]{CKS}
J. S. Carter, S. Kamada, M. Saito,
{\it Geometric interpretations of quandle homology}, 
J. Knot Theory Ramifications {\bf 10} (2001) 345--386.








\bibitem[Dup]{Dup}J. L. Dupont, {\it Scissors congruences, group homology and characteristic classes},
Nankai Tracts in Mathematics, vol. 1, World Scientific Publishing Co., Inc., River Edge, NJ, 2001.

\bibitem[DG]{DG}J. L. Dupont, S. Goette, {\it The extended Bloch group and the Cheeger-Chern-Simons class}, Geom. Topol. {\bf 11} (2007), 1623--1635.


\bibitem[DW]{DW}
R. Dijkgraaf, E. Witten, {\it Topological gauge theories and group cohomology},
Comm. Math. Phys. {\bf 129} (1990), 393--429.

\bibitem[Eis]{Eis} 
M. Eisermann, {\it Quandle coverings and their Galois correspondence}, Fund. Math. {\bf 225} (2014), 103--167.



\bibitem[FRS]{FRS} R. Fenn, C. Rourke, B. Sanderson, {\it The rack space}, Trans. Amer. Math. Soc. {\bf 359} (2007), no. 2, 701--740.




\bibitem[GTZ]{GTZ} S. Garoufalidis, D. P. Thurston, and C. K. Zickert, {\it The complex volume of $SL(n;\mathbb{C})$-representations of 3-manifolds}, 
Duke Math. J. {\bf 164} (2015), no. 11, 2099--2160. 

\bibitem[HW]{HW} 
P. de la Harpe and C. Weber, {\it On malnormal peripheral subgroups in fundamental groups }, Confluentes Mathematici, 6, no 1 (2014), p. 41--64.

\bibitem[HWO]{HWO} 
P. de la Harpe and C. Weber, with an appendix by D. Osin, {\it Malnormal subgroups and Frobenius groups: basics and examples}, Confluentes Mathematici, 1 (2014), p. 65--76.

\bibitem[Hoc]{Hoc} 
G. Hochschild, {\it Relative homological algebra}, Trans. Amer. Math. Soc. {\bf 82} (1956) 246--269.


\bibitem[Ishi]{Ishi}
K. Ishikawa, {\it Cabling formulae of quandle cocycle invariants for surface knots}, preprint. 


\bibitem[IK]{IK}
A. Inoue, Y. Kabaya, {\it Quandle homology and complex volume}, 
Geometriae Dedicata {\bf 171} 1 (2014) 265--292.


\bibitem[Kab]{Kab} 
Y. Kabaya, 
{\it Cyclic branched coverings of knots and quandle homology},  Pacific Journal of Mathematics, {\bf 259} (2012), No. 2, 315--347.





\bibitem[Joy]{Joy} 
D. Joyce, 
{\it A classifying invariant of knots, the knot quandle},
J. Pure Appl. Algebra {\bf 23} (1982) 37--65.



\bibitem[LN]{LN} R. Litherland, S. Nelson, {\it The Betti numbers of some finite racks,} J. Pure Appl. Algebra {\bf 178} (2003), no. 2, 187--202. 


 






\bibitem[Mor]{Mor}
S. Morita, {\it Abelian quotients of subgroups of the mapping class group of surfaces}, Duke Math. Journal {\bf 70} (1993) 699--726.

\bibitem[MS]{MS}J. Morgan, D. Sullivan, {\it The transversality characteristic class and linking cycles in surgery theory}, Ann. of Math. II Ser. {\bf 99} (1974) 463--544.


\bibitem[Neu]{Neu} W. Neumann, {\it Extended Bloch group and the Cheeger--Chern--Simons class}, Geom. Topol. {\bf 8} (2004), 413--474.



\bibitem[No1]{Nos3} T. Nosaka, {\it On third homologies of group and of quandle via the Dijkgraaf--Witten invariant and Inoue--Kabaya map}, Algebr. Geom. Topol., {\bf 14} (2014) 2655--2692.

\bibitem[No2]{Nos2} \bysame, {\it Quandle cocycles from invariant theory}, Advances in Mathematics, {\bf 245} (2013), 423--438.

\bibitem[No3]{Nos7} \bysame, {\it Homotopical interpretation of link invariants from finite quandles}, Topology Appl. {\bf 193} (2015) 1--30.


\bibitem[No4]{Nos9} \bysame, {\it Twisted cohomology pairings of knots III; trilinear}, in preparation. 



\bibitem[RS]{RS} 
C. Rourke and B. Sanderson, 
{\it A new classification of links and some calculation using it},
arXiv:math.GT/0006062.

\bibitem[Sim]{Sim}
J. Simon, {\it Roots and centralizers of peripheral elements in knot groups}
, Math. Ann. 222 (1976), 205--209. 

\bibitem[Tur]{Tur}
V. Turaev, {\it Homotopy quantum field theory}
, EMS Tracts in Mathematics {\bf 10}, European Mathematical Society, Zurich, 2010.

\bibitem[Wise]{Wise}
D. T. Wise, {\it The structure of groups with a quasiconvex hierarchy}, Preprint (2011). Available
from http://www. math. mcgill.ca/wise/papers.html

\bibitem[Zic]{Zic}
C. Zickert, 
{\it The volume and Chern-Simons invariant of a representation}, Duke Math. J. {\bf 150} (2009), no. 3, 489--532.



\end{thebibliography}
\end{document}